\definecolor{LightCyan}{rgb}{0.88,1,1}
\definecolor{LightCyan1}{rgb}{0.80,1,1}
\definecolor{Gray}{gray}{0.9}
\definecolor{Gray1}{gray}{0.95}
\newcommand\mycom[2]{\genfrac{}{}{0pt}{}{#1}{#2}}
\newcommand\reallywidehat[1]{%
\savestack{\tmpbox}{\stretchto{%
  \scaleto{%
    \scalerel*[\widthof{\ensuremath{#1}}]{\kern-.6pt\bigwedge\kern-.6pt}%
    {\rule[-\textheight/2]{1ex}{\textheight}}%WIDTH-LIMITED BIG WEDGE
  }{\textheight}% 
}{0.5ex}}%
\stackon[1pt]{#1}{\tmpbox}%
}
\newcommand{\Z}{\mathbb{Z}}
\newcommand{\Q}{\mathbb{Q}}
\newtheorem{theorem}{Theorem}
\newtheorem{lemma}[theorem]{Lemma}
\newtheorem{proposition}[theorem]{Proposition}
\theoremstyle{definition}
\newtheorem{example}[theorem]{Example}
\newtheorem{remark}[theorem]{Remark}
\newtheorem{definition}[theorem]{Definition}
\DeclareRobustCommand{\mybox}[2][gray!20]{%
\begin{tcolorbox}[   %% Adjust the following parameters at will.
        breakable,
        left=0pt,
        right=0pt,
        top=0pt,
        bottom=0pt,
        colback=#1,
        colframe=#1,
        width=\dimexpr\textwidth\relax, 
        enlarge left by=0mm,
        boxsep=5pt,
        arc=0pt,outer arc=0pt,
        ]
        #2
\end{tcolorbox}
}
\title[Generalized Fermat curves]{Galois action on homology of generalized Fermat Curves}
\date{\today}
\author[A. Kontogeorgis]{Aristides Kontogeorgis}
\address{Department of Mathematics, National and Kapodistrian  University of Athens
Pane\-pist\-imioupolis, 15784 Athens, Greece}
\email{kontogar@math.uoa.gr}
\author[P. Paramantzoglou]{Panagiotis Paramantzoglou }
\address{Department of Mathematics, National and Kapodistrian University of Athens
Pane\-pist\-imioupolis, 15784 Athens, Greece}
\email{pan\_par@math.uoa.gr}
\date \today
\newcommand{\aprod}{\mathop{\operator@font \hbox{\Large$\ast$}}}
\begin{document}

\begin{abstract}
The fundamental group of Fermat and generalized Fermat curves is computed. 
These curves are Galois ramified covers of the projective line with abelian Galois groups $H$. 
We  provide a unified study of the action of both cover Galois group $H$ and  the absolute Galois group $\mathrm{Gal}(\bar{\Q}/\Q)$ on the pro-$\ell$ homology
of the curves in study.  
Also the relation to the pro-$\ell$ Burau representation is investigated. 
\end{abstract}

\maketitle

 % \tableofcontents

%
%
%
\section{Introduction}
%
%
%
% {\color{blue}
% \begin{itemize}
% 	\item It is very difficult to work with general curve
% 	\item Superelliptic curves provide us with concrete examples of curves to work with.
% 	\item Consider new infinite examples of coverings of curves. The Galois group is not part of the automorphisms if $g\geq 2$. 
% 	\item Families of curves filtered from bellow. 
% \end{itemize}
% }
% \highlight[id=AK,comment=3/3/2019]{
In \cite{MR4117575} we have studied the actions of the braid group and of the absolute Galois group on a cyclic cover of the projective line. 
In that article we have exploited the fact that in a ramified cover $X\rightarrow \mathbb{P}^1$ of the projective 
line 
we can remove the  ramified points and in this way  we obtain an open cover $X^0 \rightarrow X_s:=\mathbb{P}^1 \backslash \{P_1,\ldots,P_s\}$ of the projective line minus the ramified points.
 Fix a point $x_0 \in X_s$ and an arbitrary but fixed preimage $x_0'\in X^0$.
By covering space theory the open curve $X^0$ can be described as a quotient of the universal covering space $\tilde{X}_s$ by the fundamental group of the open curve $\pi_1(X^0,x_0')< \pi_1(X_s,x_0)  \cong F_{s-1}$, where $F_{s-1}$ is the free group in $s-1$ generators. Also the group $\pi_1(X^0,x_0')$ can be described as a subgroup of $F_{s-1}$ by using the Screier lemma technique, see \cite[sec. 3]{MR4117575}.   

Y. Ihara in \cite{IharaCruz}, \cite{Ihara1985-it} observed that 
 for a fixed prime $\ell$,
if we pass to the pro-$\ell$ completions of the fundamental groups of the above curves, then the  absolute Galois group $\mathrm{Gal}(\bar{\Q}/\Q)$ can be realized as a group of automorphisms of $\mathfrak{F}_{s-1}$ and it can also act on  certain subgroups $\mathfrak{F}_{s-1}$, corresponding to topological covers of $X_s$,
 here by $\mathfrak{F}_{s-1}$ we denote the free pro-$\ell$ group in $s-1$ generators. 
The fundamental group of $X_s$ admits the presentation
\[
\pi_1(X_s,x_0)\cong\langle
x_1,\ldots, x_s| x_1x_2\cdots x_s=1
\rangle.
\]
 In this way we can unify the study of both Braid group and the absolute Galois group on (co)homology spaces  of cyclic curves.  
In \cite[thm. 1]{MR4117575} the fundamental group $R_{\ell^k}$ of the open cyclic cover 
 $Y_{\ell^k}$ of the projective line minus $s$-points is computed to be equal to 
\begin{equation}
\label{Rlk}
R_{\ell^k}=
\langle
x_1^i x_j x_1^{-i+1}, 0\leq i \leq \ell^k-2, 2\leq j \leq s-1, x_1^{\ell^k-1}x_j, 1\leq j \leq s-1
\rangle.
\end{equation}
\begin{definition}
Given a group $F$ we will denote by $F'$ the derived subgroup of $F$, that is  $F'=[F,F]$. 
\end{definition}

In this article we continue this study by focusing on the case of certain abelian coverings of the projective line. 
We begin by studying the classical Fermat curves $\mathrm{Fer}_n$ given as projective algebraic curves by the equation 
\[
\mathrm{Fer}_n:=\{[x:y:z] \in \mathbb{P}^3: x^n+y^n+z^n=0\}. 
\]
The Fermat curve $\mathrm{Fer}_n$
forms a  ramified Galois 
cover of the projective line ramified over three points, $\{0,1,\infty\}$
with Galois group $H_0=\Z/n\Z \times \Z/n\Z$. 
More precisely the fundamental group of $X_3=\mathbb{P}^1\backslash \{0,1,\infty\}$ is isomorphic to the free group $F_2$ in two generators $a,b$. 
Here we may fix a point $x_0$ in $X_3$ and take as $a,b$ the homotopy classes of  loops circling once clockwise around the points $0,1$ of $\mathbb{P}^1$. There is also a third loop $c$ circling around $\infty$, but the homotopy class of this  loop can be expressed in terms of $a,b$ in terms of the relation $abc=1$.  
When we remove the preimages of the three ramified points $\{0,1,\infty\}$ from $\mathrm{Fer}_n$ we obtain the open Fermat curve $\mathrm{Fer}_n^0$, which is a topological covering of $X_3$. 
Let $R_{\mathrm{Fer}_n}=\pi_1(\mathrm{Fer}_n^0,x_0')$ be the  fundamental group of the open 
Fermat curve $\mathrm{Fer}_n^0$ and $x_0'\in \mathrm{Fer}_n^0$ is a fixed preimage of the point $x_0\in X_3$. The group $R_{\mathrm{Fer}_n}$ is known to be isomorphic to $\langle a^n,b^n, [a,b] \rangle < F_2$,  while 
\[
H_0= F_2/R_{\mathrm{Fer}_n}=F_2/ \langle a^n,b^n, [a,b] \rangle.
\]
Using this quotient we can define the following generators for the group $H_0$, namely $\alpha= a R_{\mathrm{Fer}_n}$ and $\beta=b R_{\mathrm{Fer}_n}$. 
Let $R_{\mathrm{Fer}_n}'$ denote the commutator group of the fundamental group 
$R_{\mathrm{Fer}_n}$
of $\mathrm{Fer}_n$. 
The group $F_2$ acts on $F_2$ by conjugation, that is for every two elements
$x,y\in F_2$ we define $x^y=yxy^{-1}$. 
Notice also that the homology group $R_{\mathrm{Fer}_n}/R_{\mathrm{Fer}_n}'$ becomes an $H_0$-module by defining 
\[
x^\alpha =a x a^{-1}, x^{\beta}= b x b^{-1} \text{ for } x \in R_{\mathrm{Fer}_n}/R_{\mathrm{Fer}_n}'.
\]
This action is well defined and independent of the selection of the representative of the class $\alpha,\beta \in H_0$. 

\begin{theorem}
% Let $\alpha, \beta$ be the generators of the group $H_0=\Z/n\Z \times \Z/n\Z$. 
% Also for an element $x\in F_{s-1}$ we will denote by 
The fundamental group $R_{\mathrm{Fer}_n}$ of the open Fermat curve $\mathrm{Fer}_n^0$
% seen as $H_0=\Z/n\Z \times \Z/n\Z$-cover of $\mathbb{P}^1-\{0,1,\infty\}$ 
is the  subgroup of the free group $F_2=\langle a, b\rangle$ on the generators
\mybox[gray!10]{
 \begin{align*}
A_1 &=  \left\{ (b^n)^{a^i}: 0 \leq i \leq n-1 \right\},
&  \#A_1 &= n
\\
A_2 & =  \left\{ [b^j,a]^{a^{i}}: 1 \leq j \leq n-1, 0 \leq i \leq n-2\right\}, & \#A_2 &= (n-1)^2
 \\
A_3 & =  \{a ^n [a^{-1},b^j]: 0\leq j \leq n-1 \},
&  \#A_3 &= n
\end{align*}
}
% Moreover  
% set $H_0=\Z/n\Z \times \Z/n\Z$,  where $\alpha,\beta$ are generators of the two cyclic components of the group $H_0$. 
The module $R_{\mathrm{Fer}_n}/R_{\mathrm{Fer}_n}'$ is generated as  a $\Z[H_0]$-module by the elements $a^n,b^n$ and $[a,b]$. 
 An isomorphic image of the module $R_{\mathrm{Fer}_n}/R_{\mathrm{Fer}_n}'$ fits in the small exact sequence 
\mybox[gray!10]{
\[
% R_{\mathrm{Fer}_n}/R_{\mathrm{Fer}_n}'=\Z[\langle \alpha \rangle] \bigoplus \Z[\langle \beta \rangle] \bigoplus \Z[H_0]/I,
0 \rightarrow 
\Z[\langle \alpha \rangle] 
\bigoplus \Z[\langle \beta \rangle]
\rightarrow 
R_{\mathrm{Fer}_n}/R_{\mathrm{Fer}_n}' 
\rightarrow
\Z[H_0]/I 
\rightarrow 
0,
\]
}
\noindent where $I$ is the ideal of $\Z[H_0]$ generated by $\sum_{i=0}^{n-1} \alpha^i, \sum_{i=0}^{n-1} \beta^i$, or equivalently 
\mybox[gray!10]{
\[
\Z[H_0]/I =  J_{\langle \alpha \rangle} \otimes  J_{\langle \beta \rangle},
\]
}
\noindent
where $J_{\langle \alpha \rangle}$ (resp.  $J_{\langle \alpha \rangle}$) denotes the coaugmentation module for the cyclic group $\langle \alpha \rangle$ (resp. $\langle \beta \rangle$), see section \ref{sec:2.2}.
Finally if  $\mathbb{F}$ is  a field which contains the $n$-different $n$-th roots of $1$, then 
\mybox[gray!10]{
\begin{equation}
\label{FermatDecoK}
H_1(\mathrm{Fer}_n,\Z)\otimes_\Z \mathbb{F}= 
\bigoplus_{
  \substack{i,j=1 \\ i+j\neq n} 
}
^ {  n-1 }
\mathbb{F} {\chi_{i,j}},
\end{equation}
}
\noindent where $\chi_{i,j}$ is the character of $H_0$ such that 
$\chi_{i,j}(\alpha^\nu,\beta^\mu)=\zeta_n^{i \nu+j \mu}$ 
 and $\zeta_n$ is a fixed primitive $n$-th root of unity. 
\end{theorem}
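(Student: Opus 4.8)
The plan is to deduce the closed-curve decomposition from the module $R_{\mathrm{Fer}_n}/R_{\mathrm{Fer}_n}'$ computed above, by passing from the open curve to the closed curve and decomposing everything into $H_0$-characters over $\mathbb{F}$. Since $\mathbb{F}$ contains $n$ distinct $n$-th roots of unity, its characteristic does not divide $n$, so $\mathbb{F}[H_0]$ is split semisimple and every finitely generated $\mathbb{F}[H_0]$-module is canonically a direct sum of its $\chi_{i,j}$-isotypic lines; in particular, for a submodule $W \subseteq V$ one has $\dim (V/W)_{\chi} = \dim V_\chi - \dim W_\chi$ for every character $\chi$. I therefore only need to track the multiplicity of each $\chi_{i,j}$ in the relevant modules.

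First I would record that $H_1(\mathrm{Fer}_n^0, \mathbb{F}) = (R_{\mathrm{Fer}_n}/R_{\mathrm{Fer}_n}') \otimes_\Z \mathbb{F}$, and that the inclusion $\mathrm{Fer}_n^0 \hookrightarrow \mathrm{Fer}_n$ induces a surjection onto $H_1(\mathrm{Fer}_n, \mathbb{F})$ whose kernel $K$ is the $\mathbb{F}[H_0]$-submodule spanned by the small loops around the $3n$ punctures, namely the preimages of $0,1,\infty$; these $3n$ loops satisfy the single relation that their total sum is null-homologous, so $\dim_{\mathbb{F}} K = 3n-1$. The characters of $H_1(\mathrm{Fer}_n^0, \mathbb{F})$ are read off from the short exact sequence. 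Using the stated identification $\Z[H_0]/I = J_{\langle\alpha\rangle}\otimes J_{\langle\beta\rangle}$, an idempotent computation gives $(\Z[H_0]/I)\otimes\mathbb{F} = \mathbb{F}[H_0]/(N_\alpha, N_\beta) = \bigoplus_{i,j=1}^{n-1}\mathbb{F}\chi_{i,j}$, where the norm elements $N_\alpha = \sum_{i=0}^{n-1}\alpha^i$ and $N_\beta = \sum_{j=0}^{n-1}\beta^j$ act as $n$ on the $i=0$ (resp. $j=0$) lines and as $0$ elsewhere. For the left-hand term I would note that $\Z[\langle\alpha\rangle]$, generated by the class of $b^n$, is exactly the permutation module $\Z[H_0/\langle\beta\rangle]$ (since $\beta$ fixes $b^n$ while $\alpha$ permutes its conjugates cyclically), hence contributes the characters $\chi_{i,0}$, $0\le i\le n-1$; symmetrically $\Z[\langle\beta\rangle]$, generated by $a^n$, equals $\Z[H_0/\langle\alpha\rangle]$ and contributes $\chi_{0,j}$, $0\le j\le n-1$. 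Adding these, $H_1(\mathrm{Fer}_n^0,\mathbb{F})$ contains $\chi_{0,0}$ with multiplicity $2$, each $\chi_{i,0}$ and $\chi_{0,j}$ with nonzero index with multiplicity $1$, and each $\chi_{i,j}$ with $i,j\neq 0$ with multiplicity $1$, for a total dimension $2 + 2(n-1) + (n-1)^2 = n^2+1$, matching $\#A_1 + \#A_2 + \#A_3$.

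Next I would determine the characters of the puncture submodule $K$. The loops around the preimages of $0$ (generated by $a^n$) form $\Z[H_0/\langle\alpha\rangle]$, giving $\chi_{0,j}$; those around the preimages of $1$ (generated by $b^n$) form $\Z[H_0/\langle\beta\rangle]$, giving $\chi_{i,0}$; and those around the preimages of $\infty$, whose inertia is $\langle\alpha\beta\rangle$, form $\Z[H_0/\langle\alpha\beta\rangle]$, giving the characters $\chi_{i,j}$ with $i+j\equiv 0 \pmod n$. The unique relation $\sum\gamma = 0$ is $H_0$-invariant, hence lies in the $\chi_{0,0}$-line, so it cuts the multiplicity of $\chi_{0,0}$ in $K$ from $3$ down to $2$. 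A dimension count then pins everything down: the contributions in the trivial, the $\chi_{i,0}$ and the $\chi_{0,j}$ lines account for $2 + 2(n-1) = 2n$ dimensions, leaving $3n-1-2n = n-1$ dimensions which must occupy the $n-1$ characters $\chi_{i,j}$ with $1\le i,j\le n-1$ and $i+j = n$, each with multiplicity $1$.

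Finally, subtracting the character multiplicities of $K$ from those of $H_1(\mathrm{Fer}_n^0,\mathbb{F})$ yields the claim: $\chi_{0,0}$, the $\chi_{i,0}$ and $\chi_{0,j}$ carrying a zero index, and the $\chi_{i,j}$ with $i+j=n$ all cancel, while exactly the characters $\chi_{i,j}$ with $1\le i,j\le n-1$ and $i+j\neq n$ survive, each once, giving $H_1(\mathrm{Fer}_n,\Z)\otimes_\Z\mathbb{F} = \bigoplus_{i,j=1,\ i+j\neq n}^{n-1}\mathbb{F}\chi_{i,j}$ of dimension $(n-1)(n-2) = 2g$, as it must be. The step I expect to be the main obstacle is the control of $K$ on the characters $\chi_{i,j}$ with $i+j=n$: a priori the loops around $\infty$ could map trivially into $R_{\mathrm{Fer}_n}/R_{\mathrm{Fer}_n}'$ along these lines, and one must rule this out. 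I would secure it either by the rank count above, where the total rank $3n-1$ of $K$ forces these $n-1$ lines to be hit, or, more explicitly, by expressing the class of $(ab)^n$ through the module generators $a^n$, $b^n$, $[a,b]$ and checking that its $\chi_{i,j}$-component is nonzero for $i+j=n$; the latter route also re-derives the single relation $\sum\gamma=0$ intrinsically.
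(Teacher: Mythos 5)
Your argument covers only the last assertion of the theorem, the character decomposition \eqref{FermatDecoK}. The theorem also asserts that $R_{\mathrm{Fer}_n}$ is freely generated by $A_1\cup A_2\cup A_3$, that $R_{\mathrm{Fer}_n}/R_{\mathrm{Fer}_n}'$ is generated over $\Z[H_0]$ by $a^n,b^n,[a,b]$, and that the short exact sequence with kernel $\Z[\langle\alpha\rangle]\oplus\Z[\langle\beta\rangle]$ and cokernel $\Z[H_0]/I\cong J_{\langle\alpha\rangle}\otimes J_{\langle\beta\rangle}$ holds; you take all of these as given inputs (``the module \dots computed above''). In the paper these are the substance of Lemma \ref{lemma7} (Schreier's lemma applied to the transversal $\{a^ib^j\}$), Lemma \ref{homoRF}, and the lemma of Section \ref{sec:2.2} (which needs the commutator identities \eqref{eqforceAug}--\eqref{eqforceAug1} to show that the norm elements $\sum\alpha^i$ and $\sum\beta^j$ annihilate $[a,b]$ modulo the permutation part). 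So as a proof of the stated theorem the proposal is incomplete: three of its four claims are assumed rather than proved.

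For the part you do prove, the argument is correct and is essentially the paper's route in Proposition \ref{prop:15}: both identify the boundary classes $(a^n)^{\beta^j}$, $(b^n)^{\alpha^i}$, $((ab)^n)^{\alpha^i}$, observe that they exhaust the permutation-module part and kill exactly the characters $\chi_{i,n-i}$ inside $J_{\langle\alpha\rangle}\otimes J_{\langle\beta\rangle}\otimes\mathbb{F}$, and read off the survivors. Your version is organized as a global multiplicity subtraction (open-curve multiplicities minus those of the puncture submodule $K$), and is in one respect more careful than the paper's: you make explicit that the single relation $\sum\gamma_i=0$ is $H_0$-invariant, hence only lowers the $\chi_{0,0}$-multiplicity, and you correctly flag and close the one real issue, namely that the $\infty$-loops genuinely hit all $n-1$ lines $\chi_{i,n-i}$ (secured by the rank count $\dim K=3n-1$, which in turn rests on the known genus $(n-1)(n-2)/2$ or equivalently on the Schreier rank $n^2+1=2g+3n-1$). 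All the individual character computations (the norm-element idempotent argument, the identification of $\Z[H_0]a^n$ with $\Z[H_0/\langle\alpha\rangle]$, and the final cancellation giving total dimension $(n-1)(n-2)$) check out.
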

The proof of the above theorem can be found in section \ref{sec:2.2} and in particular in proposition \ref{prop:15}. 

The generalized Fermat curves play the role of Fermat curves in the more general setting of abelian coverings of $X_s=\mathbb{P}^1\backslash \{P_1,\ldots,P_s\}$, $s > 3$.  
 % that is the projective line minus more that three points removed. 
 Their automorphism group was recently studied by R. Hidalgo, M. Leyton-\'Alvarez and the authors in \cite{HiRuKoPa}.

% Let $\mathbbm{k}$ be a field of characteristic $p\neq \ell$ (in most applications it will be an algebraic 
% number field) or $\mathbb{C}$ and $\bar{\mathbbm{k}}$ be its algebraic closure.
A generalized Fermat curve of type $(k,s-1)$, where $k,s-1 \geq 2$ are integers, is a non-singular irreducible projective algebraic curve $ C_{k,s-1} $ defined over a field $\mathbbm{k}$   admitting a group of automorphisms
  $H_0 \cong ({\mathbb Z}/{k}\Z)^{s-1}$ so that $ C_{k,s-1} /H_0$ is the projective line with exactly $s$ branch points, each one with ramification index $k$. Such a group $H_0$ is called a generalized Fermat group of type $(k,s-1)$. 
Let us consider a branched regular covering $\pi: C_{k,s-1}  \to {\mathbb P}^{1}$, whose deck group is $H_0$. 
Let $R_{k,s-1}$ be the fundamental group of the open generalized Fermat curve $C_{k,s-1}^0=C_{k,s-1} \backslash \pi^{-1}(X_s)$ of type $(k,s-1)$.

By composing by a suitable M\"obius transformation (that is, an element of ${\rm PSL}_{2}(\mathbb{C})$) at the left of $\pi$, we may assume that the branch values of $\pi$ are given by the points
$\infty, 0,1, \lambda_{1}, \ldots, \lambda_{s-3},$
where $\lambda_i \in \mathbb{C} \backslash \{0,1\}$ are pairwise different, that is we can take $X_s=\mathbb{P}^1\backslash \{\infty,0,1, \lambda_1,\ldots,\lambda_{s-3}\}$, $s > 3$.

 A generalized Fermat curve of type $(k,s-1)$ can be seen as a complete intersection in a projective 
space $\mathbb{P}^{s-1}$, defined by the following set of equations
\begin{equation} \label{defineGFC}
C_{k,s-1}=C_{k}(\lambda_{1},\ldots,\lambda_{s-3}):=\left \{ \begin{array}{rcc}
              x_0^k+x_1^k+x_2^k&=&0\\
              \lambda_1x_0^k+x_1^k+x_3^k&=&0\\
              \vdots\hspace{1cm} &\vdots &\vdots\\
              \lambda_{s-3}x_0^k+x_1^k+x_{s-1}^k&=&0\\
             \end{array}\right \}\subset {\mathbb P}^{s-1}.
\end{equation} 
Observe that topologically the construction of generalized Fermat curves does not depend on the configuration of the ramification points. On the other hand the Riemann surface/algebraic curve structure depends heavily on this configuration. For instance the automorphism group depends on the configuration of these points, see \cite{HiRuKoPa}.

The genus of $ C_{k,s-1}$ can be  computed using the Riemann-Hurwitz formula: 
\begin{equation}\label{genero}
g_{(k,s-1)}=1+ \frac{k^{s-2}}{2} ((s-2)(k-1)-2).
\end{equation}
It is known \cite{Gonzalez-Diez2009-md} that generalized Fermat curves, 
have the orbifold uniformization $\mathbb{H}/\Gamma$ in terms of the Fuchsian group 
\begin{equation} \label{genFermatCurveFuchs}
  \Gamma_k=
  \langle
  x_1,x_2,\ldots,x_{s} \mid x_1^k=\cdots=x_{s}^k=x_1x_2\cdots x_s=1
  \rangle.
\end{equation}
The surface group is given, see \cite{Gonzalez-Diez2009-md}, \cite{Maclachlan} as $F_{s-1}\cdot \langle x_1^k,\ldots,x_{s-1}^k, (x_1\cdots x_{s-1})^k \rangle$. We will compute the genus of the generalized Fermat curves by two more different methods in eq. \ref{secondgenusFormula} and in section \ref{alexGFC}.

Recall that  $R_{k,s-1}$ denotes the fundamental group of the open generalized Fermat curve $C_{k,s-1}^0$ and set $\mathfrak{F}_{s-1,k}=\mathfrak{F}_{s-1}/\mathfrak{R}_k$, where $\mathfrak{R}_k$ is the smallest closed subgroup containing all elements $x_i^{\ell^k}, 0\leq i \leq s$.
\begin{theorem}
 The group $R_{k,s-1}$  is a free group generated by the union of the sets
 \mybox[gray!10]{
\begin{align}
\label{A-categories1}
A_{s-1} &=
\left\{
(x_{s-1}^k)^{x_1^{i_1}\cdots x_{s-2}^{i_{s-2}}}
\right\}   
\\ \nonumber
B_\nu
% &=
% \left\{
% x_{1,\nu-1}
% ^
% {
%   \mathbf{i}
% }
% \cdot x_\nu^{i_\nu} x_{\nu+1,s-1}
% ^
% {
%   \mathbf{i}
% }
%  \cdot x_\nu\cdot
%  \left(
% x_{\nu+1,s-1}
% ^
% {
%   \mathbf{i}
% }
% \right)
% ^{-1}\cdot x_\nu^{-i_\nu-1}\cdot 
% \left(
% x_{1,\nu-1}
% ^
% {
%   \mathbf{i}
% }
% \right)^{-1}
% \right\}
% \\\nonumber
% &= \left\{
% [x_{\nu+1,s-1}^{  \mathbf{i}
% }
% , x_\nu]^{x_{1,\nu-1}
% ^\mathbf{i}
% \cdot x_\nu^{i_\nu}}
% \right\}
% \\ \nonumber
&= 
\left\{
[x_{\nu+1,s-1}
^\mathbf{i}
, x_\nu]^{x_{1,\nu}^{
\mathbf{i}
}}
\right\}
\quad 1\leq i_\nu \leq k-2
\\
\nonumber
B_\nu'
% &=
% \left\{
% x_{1,\nu-1}^{  \mathbf{i}   }
% % x_1^{i_1}\cdots x_{\nu-1}^{i_{\nu-1}}
% \cdot
% x_\nu^{k-1}
% \cdot
% x_{\nu+1,s-1}^ \mathbf{i} 
% % x_{\nu+1}^{i_{\nu+1}} \cdots
% % x_{s-1}^{i_{s-1}}
% \cdot
% x_\nu
% \cdot
% \left(
% x_{\nu+1,s-1}^{  \mathbf{i}}
% \right)^{-1}
% \cdot
% \left(
% x_{1,\nu-1}^{  \mathbf{i}}
% \right)^{-1}
% \right\}
% \\ \nonumber
&=
\left\{
\left(
x_\nu^k [x_\nu^{-1},x_{\nu+1,s-1}^{  \mathbf{i}}]
\right)^{x_{1,\nu-1}^{  \mathbf{i}}}
\right\},
\end{align} 
}
\noindent where
$
x_{\ell_1,\ell_2}^{\mathbf{i}}
=x_{\ell_1}^{i_{\ell_1}} x_{\ell_1+1}^{i_{\ell_1+1}} 
\cdots x_{\ell_2}^{i_{\ell_2}}$,
$\mathbf{i}=(i_1,\ldots,i_{s-1})$,
 $0\leq i_j\leq k-1$, $1\leq j \leq s-1$.
The group $R_{k,s-1}/R_{k,s-1}'$ is also generated (not necessary freely) by the 
 union of the sets
\mybox[gray!10]{
  \begin{align}
\label{second-set-of-non-free-gens}
\tilde{A}_{s-1} &= \{
(x_{s-1}^k)^{x_{1, s-2}} \} 
\\ \nonumber
\tilde{A}_\nu &= \{ (x_\nu ^k)^{x_{1,\nu-1} \cdot x_{\nu+1,s-1}} \}, \text{ for } 1\leq \nu \leq s-2
\\ \nonumber
\tilde{A}_\nu' & =\{[x_j, x_\nu]^{x_{1,\nu-1} \cdot x_\nu^{i_\nu} \cdot x_{\nu+1,s-1}}\}, \text{ for } 1\leq \nu \leq s-2,
\end{align}}

\end{theorem}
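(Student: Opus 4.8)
The plan is to obtain both statements by a Reidemeister--Schreier analysis of the inclusion $R_{k,s-1}<F_{s-1}$, exactly in the spirit of the cyclic computation recorded in \eqref{Rlk}, but now for the abelian quotient. First I would identify $R_{k,s-1}$ as the kernel of the surjection $\varphi\colon F_{s-1}=\langle x_1,\dots,x_{s-1}\rangle \twoheadrightarrow H_0=(\mathbb{Z}/k\mathbb{Z})^{s-1}$, $x_j\mapsto e_j$, so that the cosets of $R_{k,s-1}$ are indexed by exponent vectors $\mathbf{i}=(i_1,\dots,i_{s-1})$ with $0\le i_j\le k-1$. The natural transversal is $T=\{x_1^{i_1}\cdots x_{s-1}^{i_{s-1}} : 0\le i_j\le k-1\}$, and the key observation is that $T$ is a \emph{Schreier} transversal: every prefix of one of these reduced words again lies in $T$. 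The Reidemeister--Schreier theorem then asserts that $R_{k,s-1}$ is free on the nontrivial elements $g_{t,\nu}:=t\,x_\nu\,(\overline{t x_\nu})^{-1}$, where $t$ ranges over $T$, $x_\nu$ over the generators, and $\overline{w}$ denotes the transversal representative of the coset $R_{k,s-1}w$; moreover $g_{t,\nu}$ is trivial precisely when $t x_\nu\in T$.

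Next I would compute $\overline{t x_\nu}$ and reduce $g_{t,\nu}$ case by case. Writing $t=x_1^{i_1}\cdots x_{s-1}^{i_{s-1}}$, the coset of $t x_\nu$ has exponent vector $\mathbf{i}+e_\nu \bmod k$, so $\overline{t x_\nu}$ is read off at once. Three families survive: (i) $\nu=s-1$ with $i_{s-1}=k-1$ (the only wrap of the last letter) gives, after cancelling, $g_{t,s-1}=(x_{s-1}^k)^{x_1^{i_1}\cdots x_{s-2}^{i_{s-2}}}$, i.e. the set $A_{s-1}$; (ii) $\nu<s-1$ with $i_\nu\le k-2$ and $(i_{\nu+1},\dots,i_{s-1})\neq 0$ gives, with $u=x_{1,\nu}^{\mathbf i}$ and $w=x_{\nu+1,s-1}^{\mathbf i}$, the reduction $g_{t,\nu}=u\,(w x_\nu w^{-1}x_\nu^{-1})\,u^{-1}=[w,x_\nu]^{u}$, i.e. $B_\nu$; (iii) $\nu<s-1$ with $i_\nu=k-1$ and $w$ arbitrary gives $g_{t,\nu}=v\,(x_\nu^{k-1}wx_\nu w^{-1})\,v^{-1}=\big(x_\nu^{k}[x_\nu^{-1},w]\big)^{v}$ with $v=x_{1,\nu-1}^{\mathbf i}$, i.e. $B_\nu'$. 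The remaining pairs, with $i_\nu\le k-2$ and $(i_{\nu+1},\dots,i_{s-1})=0$, are exactly the $g_{t,\nu}$ that collapse to the identity, and they account for the $|T|-1$ edges of the Schreier spanning tree. As a consistency check I would confirm $\#A_{s-1}+\sum_\nu(\#B_\nu+\#B_\nu')=1+(s-2)k^{s-1}$, matching the Nielsen--Schreier rank $1+|H_0|(\operatorname{rank}F_{s-1}-1)$; specializing to $s=3$, $k=n$ recovers the sets $A_1,A_2,A_3$ of the Fermat theorem.

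For the second statement I would pass to $R_{k,s-1}/R_{k,s-1}'$, regarded as a $\mathbb{Z}[H_0]$-module with $H_0$ acting by conjugation as in the Fermat discussion, and show that modulo $R_{k,s-1}'$ the free generators above are rewritten through the elementary generators $\tilde A_{s-1},\tilde A_\nu,\tilde A_\nu'$. The tool is ordinary commutator calculus: from $[ab,c]=[b,c]^{a}[a,c]$ and $[a^m,c]=\prod_{l}[a,c]^{a^{l}}$ one expands $[x_{\nu+1,s-1}^{\mathbf i},x_\nu]$ and $[x_\nu^{-1},x_{\nu+1,s-1}^{\mathbf i}]$ into products of conjugates of the basic commutators $[x_j,x_\nu]$, $\nu<j\le s-1$. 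Thus each $B_\nu$ becomes a product of conjugates of $[x_j,x_\nu]$, i.e. of elements of $\tilde A_\nu'$; each $B_\nu'$ splits as the power part $x_\nu^{k}$ (contributing $\tilde A_\nu$, the $x_\nu$-conjugation acting trivially on $x_\nu^k$) times a commutator part lying in $\tilde A_\nu'$; and $A_{s-1}=\tilde A_{s-1}$. Since conversely every element of the $\tilde A$-sets visibly lies in $R_{k,s-1}$, the two families generate the same submodule, proving that $\tilde A_{s-1}\cup\bigcup_\nu(\tilde A_\nu\cup\tilde A_\nu')$ generates $R_{k,s-1}/R_{k,s-1}'$.

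The routine but delicate part is (ii)--(iii): determining $\overline{t x_\nu}$ and reducing the long words to the clean forms $[w,x_\nu]^u$ and $(x_\nu^k[x_\nu^{-1},w])^v$ without sign or ordering errors, while keeping the index ranges (no-wrap $i_\nu\le k-2$ versus wrap $i_\nu=k-1$, and $w\neq1$) in exact bijection with the stated sets. I expect the genuine obstacle to be the commutator bookkeeping of the last paragraph: one must verify that all conjugators produced by the expansions reduce, modulo $R_{k,s-1}'$, to transversal elements of precisely the shape $x_{1,\nu-1}^{\mathbf i}\,x_\nu^{i_\nu}\,x_{\nu+1,s-1}^{\mathbf i}$ occurring in $\tilde A_\nu'$, so that no generator of the module is lost in passing from the free set to the elementary one.
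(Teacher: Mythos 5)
Your proposal is correct and follows essentially the same route as the paper: the same Schreier transversal $T=\{x_1^{i_1}\cdots x_{s-1}^{i_{s-1}}\}$, the same case analysis of $\overline{t x_\nu}$ yielding $A_{s-1}$, $B_\nu$, $B_\nu'$ with the same index ranges and the same Nielsen--Schreier count $(s-2)k^{s-1}+1$, and the same commutator expansions (the paper's Lemma \ref{com-lem-ide} and Lemma \ref{lem19} together with eq.~(\ref{power-xk})) to pass to the generators $\tilde A_{s-1}$, $\tilde A_\nu$, $\tilde A_\nu'$ of the abelianization. The ``delicate point'' you flag about the conjugators is resolved in the paper exactly as you anticipate: since the $H_0$-action on $R_{k,s-1}/R_{k,s-1}'$ factors through the abelian quotient, the reordering of conjugators into the transversal shape is automatic.
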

\begin{proof}
This theorem is proved using the  Schreier lemma in section \ref{sec: Generalized Fermat Curves}. The transition from the first set of generators of eq. (\ref{A-categories1})  to the second set of non free generators of eq. (\ref{second-set-of-non-free-gens}) is done in proposition \ref{prop21}.
% Notice that the generators given above are not free generators, a set of free generators is given in lemma \ref{lemma7}.
\end{proof}
In our pro-$\ell$ setting we are interested in generalized Fermat curves of type $(\ell^k,s-1)$, so we will restrict ourselves to the study of curves $C_{\ell^k,s-1}$. 
Set
\(
\mathcal{I}:= ( \Z \cap [0,\ell^k))^{s-1}.
\)
Fix a primitive $\ell^k$-root of unity $\zeta_{\ell^k}$ and for each $\mathbf{i}=(i_1,\ldots,i_{s-1}) \in \mathcal{I}$ define the characters $\chi_{\mathbf{i}}(\cdot)$ on the abelian group 
\[
H_0= \{\mathbf{x}=(\bar{x}_1^{\nu_1},\ldots,\bar{x}_{s-1}^{\nu_{s-1}}): \bar{x}_{\mu}^{\nu_\mu} \in \mathbb{Z}/\ell^k \mathbb{Z} \} \cong (\mathbb{Z}/\ell^k \mathbb{Z})^{s-1}
\]
by 
\[
\chi_{\mathbf{i}}(\mathbf{x})=\zeta_{\ell^k}^{\sum_{\mu=1}^{s-1} \nu_\mu i_\mu}.
\]
\begin{theorem}
The pro-$\ell$ homology of the closed curve is given by 
\begin{equation}
\label{th4eq1}
H_1(C_{\ell^k,s-1},\Z)\otimes_\Z \Z_\ell=
% \left(
% \frac{
% \mathfrak{R}_{k,s-1} \cap \mathfrak{R}_k}{\mathfrak{R}_k}
% \right)^{\mathrm{ab}}=
\frac{\mathfrak{F}_{s-1,k}'}{\mathfrak{F}_{s-1,k}''}.
 \end{equation}
% where $\mathfrak{R}_{k,s-1}$ is the pro-$\ell$ completion of the group $R_{\ell^k,s-1}$,  and $\mathfrak{R}_{k,s-1}=\mathfrak{F}_{s-1}/\mathfrak{R}_k$.
Let  $\mathbb{F}$ be a field containing $\Z_\ell$ and the $\ell^k$-roots of unity. We have the following decomposition: 
\mybox[gray!10]{
\[
H_1( C_{\ell^k,s-1} ,
  \mathbb{F})=\bigoplus_{\mathbf{i}\in \mathcal{I}} \mathbb{F} \cdot
C(\mathbf{i}) \chi_{\mathbf{i}},
\]
}
\noindent where 
\[
C(\mathbf{i})=
\begin{cases}
s-z(\mathbf{i})-2 & \text{if } \mathbf{i}
\neq (0,\ldots,0)
\\
s-z(\mathbf{i}) & \text{if } \mathbf{i}=
(0,\ldots,0)
\end{cases}
\]
 and $z(\mathbf{i})$ is defined in eq. (\ref{zDef}). 
Moreover 
\[
\mathrm{rank}_{\Z_\ell} H_1( C_{\ell^k,s-1} ,\Z_\ell)=
(s-1)\left( \ell^k \right)^{s-1} +2 -s \left(\ell^k \right)^{s-2}.
\]
Let $\mathbbm{k} \subset \mathbb{C}$ and let  $\bar{\mathbbm{k}}$ be the  algebraic closure of $\mathbbm{k}$. The generalized Fermat curves behave in general in the same way  if $\mathbbm{k}$ is a field of characteristic $p\neq \ell$, but in this article we need to use fundamental groups and the theory of covering spaces so it is easier to assume that  $\mathbbm{k} \subset \mathbb{C}$, instead of working with algebraic fundamental groups in the sense of Grothendieck \cite{SGA1}, \cite{SGA1a}. In general there will be no difference if we work over a field of characteristic zero. 

Fix the number of ramified points $s$. If $\bar{\mathbbm{k}}(  C_{\ell^k,s-1} )$ is the function field of the generalized Fermat curve then 
\[
\frac{\mathfrak{F}_{s-1,k}'}{\mathfrak{F}_{s-1,k}''}=
\mathrm{Gal}
\big(
\bar{\mathbbm{k}} ( C_{\ell^k,s-1} )^{\mathrm{unrab}}/
\bar{\mathbbm{k}} ( C_{\ell^k,s-1} 
)
% K_k^{\mathrm{unrab}}/K_k
\big),
\]   
where $\bar{\mathbbm{k}}(  C_{\ell^k,s-1} )^{\mathrm{unrab}}$ is the maximal abelian unramified extension of the function field 
$\bar{\mathbbm{k}}(  C_{\ell^k,s-1} )$. 
% \highlight[id=AK,comment=5/3/2/19]{Tate module action of cover group on homology}
\end{theorem}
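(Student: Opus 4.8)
The plan is to first extract (\ref{th4eq1}) from the orbifold picture, then compute the $H_0$-module structure, and finally translate into Galois language. The uniformization (\ref{genFermatCurveFuchs}) presents the closed curve as $\mathbb{H}/\Sigma$, where $\Sigma=\ker(\Gamma_k\to H_0)$ is the torsion-free surface group and $\Gamma_k=F_{s-1}/\langle\!\langle x_1^k,\dots,x_{s-1}^k\rangle\!\rangle$ (the relation $x_1\cdots x_s=1$ being built into the free presentation of $\pi_1(X_s)$). Passing to pro-$\ell$ completions, which is legitimate because Fuchsian and surface groups are good in Serre's sense and residually-$\ell$, turns $\Gamma_k$ into $\mathfrak{F}_{s-1,k}=\mathfrak{F}_{s-1}/\mathfrak{R}_k$. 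First I would verify that $\mathfrak{F}_{s-1,k}^{\mathrm{ab}}=H_0$: indeed $\mathfrak{F}_{s-1}^{\mathrm{ab}}=\Z_\ell^{\,s-1}$, and the images of $\mathfrak{R}_k$ impose exactly $\ell^k\Z_\ell$ on each coordinate, so $\mathfrak{F}_{s-1,k}^{\mathrm{ab}}\cong(\Z/\ell^k\Z)^{s-1}=H_0$. Hence the given surjection $\mathfrak{F}_{s-1,k}\to H_0$ \emph{is} the abelianization map, its kernel is $\mathfrak{F}'_{s-1,k}$, and this kernel is the pro-$\ell$ surface group, i.e. the pro-$\ell$ fundamental group of $C_{\ell^k,s-1}$. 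Abelianizing gives $\mathfrak{F}'_{s-1,k}/\mathfrak{F}''_{s-1,k}=\pi_1(C_{\ell^k,s-1})^{\mathrm{ab}}\otimes\Z_\ell=H_1(C_{\ell^k,s-1},\Z)\otimes\Z_\ell$, which is (\ref{th4eq1}).

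For the character decomposition, since $\mathbb{F}$ has characteristic $0$ and contains the $\ell^k$-th roots of unity, $\mathbb{F}[H_0]$ is split semisimple and $H_1(C_{\ell^k,s-1},\mathbb{F})$ decomposes as $\bigoplus_{\mathbf{i}}\mathbb{F}\cdot C(\mathbf{i})\chi_{\mathbf{i}}$; the task is to pin down each $C(\mathbf{i})$. I would compute by comparing the open and closed curve. For $C^0$ the description of $R_{k,s-1}=\ker(F_{s-1}\to H_0)$ in the preceding theorem (eq. (\ref{A-categories1})) gives $H_1(C^0,\mathbb{F})=R_{k,s-1}^{\mathrm{ab}}\otimes\mathbb{F}$, sitting in the Fox relation sequence $0\to H_1(C^0,\mathbb{F})\to\mathbb{F}[H_0]^{s-1}\xrightarrow{\ \partial\ }\mathbb{F}[H_0]\xrightarrow{\ \varepsilon\ }\mathbb{F}\to0$ with $\partial(e_\nu)=x_\nu-1$. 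Projecting onto the $\chi_{\mathbf{i}}$-isotypic line turns $\partial$ into $(v_\nu)\mapsto\sum_\nu(\chi_{\mathbf{i}}(x_\nu)-1)v_\nu$, so the multiplicity of $\chi_{\mathbf{i}}$ in $H_1(C^0,\mathbb{F})$ is $s-2$ for $\mathbf{i}\neq0$ and $s-1$ for $\mathbf{i}=0$. Next I would pass to $C_{\ell^k,s-1}$ through the excision sequence $0\to K\to H_1(C^0,\mathbb{F})\to H_1(C_{\ell^k,s-1},\mathbb{F})\to0$, where $K$ is spanned by loops around the punctures over the $s$ branch points; as an $H_0$-module $K=\bigl(\bigoplus_{j=1}^{s}\mathbb{F}[H_0/I_j]\bigr)/\langle\,\text{total boundary}=0\,\rangle$, with $I_j$ the cyclic inertia of order $\ell^k$ at the $j$-th branch point. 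Since $\mathbb{F}[H_0/I_j]$ contains $\chi_{\mathbf{i}}$ iff $\chi_{\mathbf{i}}|_{I_j}$ is trivial, the $\chi_{\mathbf{i}}$-multiplicity of $K$ is the number of branch points at which $\chi_{\mathbf{i}}$ is trivial on inertia, which we recognize as $z(\mathbf{i})$ of (\ref{zDef}); the single global relation lies in the trivial isotypic component only. Subtracting yields $C(\mathbf{i})=s-2-z(\mathbf{i})$ for $\mathbf{i}\neq0$ and $C(0)=(s-1)-(s-1)=0=s-z(0)$, the stated two-case formula (the Chevalley--Weil formula gives an independent check).

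The rank is then $\sum_{\mathbf{i}\in\mathcal{I}}C(\mathbf{i})$. I would evaluate $\sum_{\mathbf{i}}z(\mathbf{i})=\sum_{j=1}^{s}\#\{\mathbf{i}:\chi_{\mathbf{i}}|_{I_j}=1\}=\sum_{j=1}^{s}|H_0/I_j|=s(\ell^k)^{s-2}$, so that $\sum_{\mathbf{i}}C(\mathbf{i})=2+(s-2)(\ell^k)^{s-1}-s(\ell^k)^{s-2}=2g_{(\ell^k,s-1)}$, in agreement with the Riemann--Hurwitz genus (\ref{genero}); this is the asserted $\Z_\ell$-rank of $H_1(C_{\ell^k,s-1},\Z_\ell)$.

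Finally, the Galois statement is the class-field/covering-space translation of (\ref{th4eq1}). Over $\bar{\mathbbm{k}}\subset\mathbb{C}$, finite unramified abelian $\ell$-power extensions of $\bar{\mathbbm{k}}(C_{\ell^k,s-1})$ correspond, by the Riemann existence theorem, to connected abelian unramified pro-$\ell$ covers of $C_{\ell^k,s-1}$, hence to open subgroups of $\pi_1(C_{\ell^k,s-1})^{\mathrm{ab}}\otimes\Z_\ell$; taking the maximal such extension identifies its Galois group with the full pro-$\ell$ abelianization, that is with $\mathfrak{F}'_{s-1,k}/\mathfrak{F}''_{s-1,k}$ by the first paragraph. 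I expect the main obstacle to be the bookkeeping in the second paragraph, namely the precise $H_0$-module structure of the boundary space $K$ and the isolation of the one relation living in the trivial isotypic component, since this is exactly what forces the two-case shape of $C(\mathbf{i})$; a secondary, standard-but-not-automatic point is the exactness of pro-$\ell$ completion on $1\to\Sigma\to\Gamma_k\to H_0\to1$, which is what guarantees that the kernel really is $\mathfrak{F}'_{s-1,k}$.
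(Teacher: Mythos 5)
Your argument is correct. The first and last assertions (the identification \eqref{th4eq1} and its restatement as $\mathrm{Gal}(\bar{\mathbbm{k}}(C_{\ell^k,s-1})^{\mathrm{unrab}}/\bar{\mathbbm{k}}(C_{\ell^k,s-1}))$) follow essentially the paper's own route: the paper establishes them together via the subgroup/field lattice and the chain of isomorphisms in eq.~\eqref{unrabGFC}, which is the same content as your uniformization-plus-Riemann-existence argument. Where you genuinely diverge is the character decomposition. The paper extracts $C(\mathbf{i})$ from the Crowell sequence \eqref{CrowellEx} attached to the pair $(\bar{R}_0,\Gamma)=(\mathfrak{F}_{s-1}',\mathfrak{R}_k)$: the Alexander module is the cokernel of the Fox Jacobian $Q$ of the presentation \emph{including} the torsion relations $x_i^{\ell^k}$, so the inertia data enters through the norm elements $\Sigma_i$ and the analysis of $\mathrm{Im}(Q)$ in lemma \ref{IMQ} and the lemmas following it, and $C(\mathbf{i})=s-c_{\mathbf{i}}-1$ (resp.\ $s-c_{\mathbf{i}}$) drops out of the four-term sequence as in proposition \ref{prop35}. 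You instead run the Blanchfield--Lyndon sequence for the \emph{open} curve, where the free presentation makes the isotypic computation the elementary surjectivity of $(v_\nu)\mapsto\sum_\nu(\chi_{\mathbf{i}}(x_\nu)-1)v_\nu$, and then kill the boundary classes, whose span you correctly identify as $\bigl(\bigoplus_{j=1}^{s}\mathrm{Ind}_{I_j}^{H_0}\mathbf{1}\bigr)$ modulo the single total-boundary relation, which indeed lies in the trivial isotypic piece. The two computations are dual bookkeeping of the same inertia data ($c_{\mathbf{i}}$ versus $z(\mathbf{i})$); yours is more topological and elementary and recovers the Chevalley--Weil shape directly, while the paper's keeps everything inside the Alexander-module formalism that it needs anyway for the subsequent Galois- and braid-action analysis.

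One point worth flagging: your total $\sum_{\mathbf{i}}C(\mathbf{i})=(s-2)(\ell^k)^{s-1}+2-s(\ell^k)^{s-2}=2g_{(\ell^k,s-1)}$ is the correct value; it agrees with Riemann--Hurwitz \eqref{genero}, with eq.~\eqref{secondgenusFormula}, and with the paper's own internal computation \eqref{CrowellGFMgenus}. The coefficient $(s-1)$ in the rank formula displayed in the theorem (and in proposition \ref{prop35}) is a typo for $(s-2)$, so you should not try to reconcile your answer with the printed statement.
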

\begin{proof}
The group theoretic interpretation of homology given in eq. (\ref{th4eq1}) is proved  in section \ref{sec.5.1}. The analysis into characters is proved in section \ref{sec:Alexander} and in particular in proposition \ref{prop35}.
\end{proof}

% $\quad$ 
We would like to construct a ``curve'' 
% {\color{red} it is not a curve!}
 $C_s$ which is a Galois cover of the projective line ramified over the set of $s$-points with Galois group $\mathrm{Gal}(C_s/\mathbb{P}^1)=\Z_\ell^{s-1}$.
This ``curve'' can only be defined as  the limit case of the generalized Fermat curves $ C_{\ell^k,s-1} $.

\noindent
\begin{minipage}{0.45\textwidth}
 % as shown in the middle diagram.   Such a  definition needs some care. 
 We can avoid the definition of such a ``curve'' by working in the language of (infinite) Galois extensions of function fields as shown on the diagram on the right. 

  In this way instead of considering a simple generalized Fermat cover we consider all of them, together. 
\end{minipage}
\begin{minipage}{0.25\textwidth}
\[
\xymatrix{
  C_s \ar[dd]_{\Z_\ell^{s-1}} \ar[dr]^{(\ell^k \Z_\ell )^{s-1}} &  \\
  &  C_{\ell^k,s-1}  \ar[ld]^{(\Z/\ell^k \Z)^{s-1}} \\
  \mathbb{P}^1
}
\]
\end{minipage}
\begin{minipage}{0.25\textwidth}
\[
\xymatrix{
  M_s \ar@{-}[dd]_{\Z_\ell^{s-1}} \ar@{-}[dr]^{(\ell^k \Z_\ell )^{s-1}} &  \\
  & 
  \bar{\mathbbm{k}}(C_{\ell^k,s-1})  \ar@{-}[ld]^{(\Z/\ell^k \Z)^{s-1}} \\
  \bar{\mathbbm{k}}(t)
}
\]
\end{minipage}

The  pro-$\ell$ limit  
\[
\mathbb{T}:=\lim_{\mycom{\leftarrow}{\ell^k}} T(\mathrm{Jac}( C_{\ell^{k},s-1} ))=
\lim_{\mycom{\leftarrow}{ \ell^k}}
\frac{
  \mathfrak{F}_{s-1,k}'
}{
  \mathfrak{F}_{s-1,k}''
}
\]
corresponds to the $\Z_\ell$ homology of this  ``curve'' $C_s$ and all  the knowledge of the Galois module structure of all Tate modules of the curves $ C_{\ell^k,s-1} $ is equivalent to the knowledge of the Galois module stucture of $\mathbb{T}$. 

The situation is similar with the pro-$\ell$ Burau representation, defined in \cite{MR4117575}. We also in section \ref{Gassner2Burau} how we can pass from the $\Z_\ell^{s-1}$-covers corresponding to 
generalized Fermat curves, to the $\Z_\ell$-case corresponding to the pro-$\ell$ Burau representation, using the ideas of \cite{ParamPartC19}. 

Section \ref{sec:2} is devoted to the application of Schreier lemma to Fermat curves \ref{sec:FermatCurves} and generalized Fermat curves \ref{sec: Generalized Fermat Curves} and the computation of homology by passing to the abelianization of the fundamental group. 
Section \ref{sec:IharaRep} is an introduction to Ihara's ideas on the study of the absolute Galois groups as a profinite 
braid \cite{Ihara1985-it}, \cite{IharaCruz} following \cite{MorishitaATIT}. 
In section \ref{sec:Crowell} we compute the Alexander module for the generalized Fermat curves, while section \ref{sec:Magnus} is devoted to the $\Z_\ell^{s-1}$ cover of the projective line, seen as a limit of $ C_{\ell^k,s-1} $ 
curves and the relation to the Tate modules of them. Finally we consider the passage to the Burau representation by comparing the corresponding Crowell sequences, in terms of the viewpoint developed in \cite{ParamPartC19}.

 { \bf Acknowledgements:} We are indebted to  the anonymous referee for  his/her
thorough report and all 
remarks and corrections, which significantly contributed to improving our article.

\subsection{Geometric Interpretation}
\label{sec:GeometInterpret}
We consider a Galois covering $\pi:\bar{Y}\rightarrow \mathbb{P}^1$ of the projective line ramified above the points in $S\subset \mathbb{P}^1_{\bar{\Q}}$, and the corresponding covering of compact Riemann surfaces. 
We also assume that the genus $g$ of $\bar{Y}$ is $\geq 2$. 
 The curve $Y_0=\bar{Y} \backslash \pi^{-1}(S)$ is a topological covering of $X_s=\mathbb{P}^1_{\mathbb{C}} \backslash S$, which 
can be described in terms of covering theory and corresponds to a subgroup $R_0$ of 
\(
\pi_1(\mathbb{P}^1_{\mathbb{C}} \backslash S ) \cong 
\pi_1(\mathbb{P}^1_{\mathbb{\bar{\mathbb{Q}}}} \backslash S ) 
\rightarrow 
\pi_1^{\mathrm{pro}-\ell}(\mathbb{P}^1_{\mathbb{C}}).
\)
Denote by $\bar{R}_0$ the closure of $R_0$ in $\pi_1^{\mathrm{pro}-\ell}(\mathbb{P}^1_{\mathbb{C}})$.

We have seen in \cite{MR4117575} and we will see in section  \ref{sec:FermatCurves} how this group $R_0$ can be computed by using the Schreier lemma.
For an application of this method to cyclic covers of the projective line we refer to \cite{MR4117575}. 
In order to pass from the open curve to the corresponding closed Riemann surface we consider the quotient by the group  $\Gamma$, which is  the closure in the  subgroup of $\mathfrak{F}_{s-1}$ generated by the stabilizers of ramification points, that is 
\begin{equation}
\label{GammaDef}
\Gamma=
  \langle x_1^{e_1},\ldots,x_{s}^{e_s} \rangle,
\end{equation}
where $e_1,\ldots,e_s$ are the ramification indices of the ramification points of $\pi:\bar{Y}\rightarrow \mathbb{P}^1$. 
In this article  for some elements $g_1,\ldots,g_t$ in a certain group we will denote by $\langle g_1,\ldots,g_t \rangle$ the closed subgroup 
generated by the elements  $\{g_1,\ldots,g_t\}$.

Notice that if $e_1=e_2=\cdots=e_s$ then $\Gamma$ is the closure of the group $\Gamma_k$ 
defined in eq. (\ref{genFermatCurveFuchs}). Later, for $e_1=\ldots=e_s=\ell^k$ we will denote this group by $\mathfrak{R}_k$.
The group $R=R_0/R_0 \cap \Gamma$ corresponds to the closed curve $\bar{Y}$ as a quotient of the hyperbolic plane. 
This geometric situation can be expressed in terms of the short exact sequence of groups  where the map $\psi$ is the natural onto map defined by sending $a \Gamma\mapsto a \bar{R}_0 \cdot \Gamma$.
\begin{equation}
\label{short-def}
1 \rightarrow  R=
\frac{\bar{R}_0}{\Gamma \cap \bar{R}_0} 
\cong 
\frac{\bar{R}_0\cdot \Gamma}{\Gamma}
\rightarrow  
\frac{\mathfrak{F}_{s-1}}{\Gamma} 
\stackrel{\psi}{\longrightarrow} 
\frac{\mathfrak{F}_{s-1}}{\bar{R}_0\cdot \Gamma}
 \rightarrow 1. 
\end{equation}
% Instead  of the ring $\mathcal{A}$, we will consider the
% ring $\mathcal{A}^{\bar{R}_0,\Gamma}=\Z_\ell[[\frac{\mathfrak{F}_{s-1}}{\bar{R}_0 \cdot \Gamma}]]$, 
% % \todo{check it again!}
% see section \ref{sec:Crowell}. Notice also that the ring $\mathcal{A}^{\bar{R}_0,\Gamma}$ is commutative since we have assumed $\mathfrak{F}_{s-1}' \subset R_0$, see also lemma \ref{comm-action}.
% \end{remark}

In this article we focus on the study of Fermat and generalized Fermat curves. Namely, in sections \ref{sec:FermatCurves} and \ref{sec: Generalized Fermat Curves} we compute the fundamental group of the corresponding curves. We also treat the classical Fermat curves $s=3$ since this computation is elementary, while for the generalized Fermat curves $s\geq 3$ more advanced tools are needed, namely the usage of Alexander modules and the Crowell exact sequence. 

% \newpage

%
%
%
\section{Generalized Fermat Curves}
\label{sec:2}
\subsection{Fermat Curves}
\label{sec:FermatCurves}
These curves are  ramified curves over $\mathbb{P}^1 \backslash \{0,1,\infty\}$ with deck group $\mathbb{Z}/n\mathbb{Z} \times \mathbb{Z}/n\mathbb{Z}$.
We have $\pi_1(\mathbb{P}^1 \backslash \{0,1,\infty\},x_0)\cong F_2 \cong\langle a,b \rangle$.
This curve is a generalized Fermat curve $C_{n,s-1}$ for $s=3$.

\begin{definition}
The commutator $[a,b]$ of two elements $a,b$ in a group is defined as  $[a,b]=aba^{-1}b^{-1}$.
\end{definition}
\begin{lemma} \label{com-lem-ide}
For any two elements $x,y$ of a group and any positive integer $j$ we have
\begin{enumerate}
\item
$[x^j,y]=[x,y]^{x^{j-1}} \cdot [x,y]^{x^{j-2}} \cdots [x,y]^x \cdot [x,y]$ \label{com-ide1}
\item
$[x,y^j]=[x,y]\cdot [x,y]^y \cdot [x,y]^{y^2} \cdots [x,y]^{y^{j-1}}$.
\end{enumerate}
\end{lemma}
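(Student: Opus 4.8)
The plan is to prove both identities by induction on $j$, reducing each to a single ``Leibniz rule'' for commutators that peels off one factor at a time. Throughout I use the paper's conventions $[a,b]=aba^{-1}b^{-1}$ and $a^b=bab^{-1}$, so that $[x,y]^{x^i}=x^i[x,y]x^{-i}$.

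For part (\ref{com-ide1}), I would first establish the product rule $[ab,c]=[b,c]^a\cdot[a,c]$ by direct expansion: on one hand $[ab,c]=abcb^{-1}a^{-1}c^{-1}$, while on the other $[b,c]^a\cdot[a,c]=a(bcb^{-1}c^{-1})a^{-1}\cdot aca^{-1}c^{-1}$, where the adjacent $a^{-1}\cdot a$ cancels to give exactly $abcb^{-1}a^{-1}c^{-1}$. Specializing to $a=x^{j-1}$, $b=x$, $c=y$ yields the recursion $[x^j,y]=[x,y]^{x^{j-1}}\cdot[x^{j-1},y]$. The base case $j=1$ is the trivial identity $[x,y]=[x,y]$, and unwinding the recursion produces the stated descending product $[x,y]^{x^{j-1}}[x,y]^{x^{j-2}}\cdots[x,y]^x[x,y]$.

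For part (2), the symmetric argument uses the product rule in the second slot, $[a,bc]=[a,b]\cdot[a,c]^b$, again proved by expanding both sides and cancelling the adjacent $b^{-1}\cdot b$. Taking $a=x$, $b=y$, $c=y^{j-1}$ gives $[x,y^j]=[x,y]\cdot[x,y^{j-1}]^y$; since conjugation by $y$ is an automorphism it distributes over the product, so by the inductive hypothesis $\bigl([x,y][x,y]^y\cdots[x,y]^{y^{j-2}}\bigr)^y=[x,y]^y[x,y]^{y^2}\cdots[x,y]^{y^{j-1}}$, and combining with the first factor $[x,y]$ gives the ascending product claimed.

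The computations are entirely routine, so there is no genuine obstacle; the only thing to watch is bookkeeping — keeping the order of the factors and the direction of the conjugation consistent with the chosen conventions, since the opposite convention for either $[a,b]$ or $a^b$ would reverse the order of the product or flip the conjugating exponents. I would therefore verify the two product rules once carefully by hand and let the telescoping cancellations in the induction do the rest.
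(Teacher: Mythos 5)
Your argument is correct and complete. Note that the paper does not actually prove this lemma at all: its ``proof'' is just a pointer to \cite[0.1, p.~1]{DDMS}. Your two product rules $[ab,c]=[b,c]^a\cdot[a,c]$ and $[a,bc]=[a,b]\cdot[a,c]^b$ (both of which check out under the paper's conventions $[a,b]=aba^{-1}b^{-1}$ and $a^b=bab^{-1}$), followed by the telescoping induction, constitute the standard self-contained derivation of exactly these identities, with the factors appearing in the stated order. The one caution you raise --- that flipping either convention reverses the order of the product or the conjugating elements --- is indeed the only real pitfall, and you have handled it consistently.
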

\begin{proof}
See \cite[0.1 p.1]{DDMS}.
\end{proof}

We will employ the Schreier lemma for describing the fundamental group of the Fermat curve  of level $n$, as explained in \cite[sec. 3]{MR4117575}.
More precisely a (right) Schreier Transversal of a subgroup $H$ of a free group $F_{s-1}=\langle x_1,\ldots,x_{s-1}\rangle$ with basis $X=\{x_1,\ldots,x_{s-1}\}$ is a set $T=\{t_1=1,\ldots, t_n\}$ of reduced words such that each right coset of $H$ in $F_{s-1}$ contains a unique word of $T$ called the representative of this class and all initial segments of these words also lie in $T$. For every $g\in F_{s-1}$ we will denote by $\bar{g}$ the element of $T$ with the property $H g = H \bar{g}$. Schreier's lemma, see \cite[lemma 6]{MR4117575} asserts that $H$ is freely generated by the elements $\gamma(t,x):=t x \overline{t x}^{-1}$, $t\in T$, $x\in X$ and $tx \not\in T$, $\gamma(t,x)\neq 1$.

A  Schreier transversal  $T$  for
 the subgroup $R_{\mathrm{Fer}_n} \subset F_2$ such that $F_2/R_{\mathrm{Fer}_n}\cong \Z/n\Z \times \Z/n\Z$  is given by $a^ib^j$, $0\leq i,j \leq n-1$. 
The fundamental group of the  Fermat curve is isomorphic to  $R_{\mathrm{Fer}_n}$.

\begin{lemma}
\label{characterRF}
The group $R_{\mathrm{Fer}_n}$ is characteristic,  that is every automorphism $\sigma\in \mathrm{Aut} F_2$ keeps $R_{\mathrm{Fer}_n}$ invariant.
\end{lemma}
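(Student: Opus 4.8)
The plan is to identify $R_{\mathrm{Fer}_n}$ as the preimage of a characteristic subgroup of the abelianization $F_2^{\mathrm{ab}}\cong\Z^2$, and then to exploit the functoriality of abelianization. First I would record the structural description: since $F_2/R_{\mathrm{Fer}_n}\cong\Z/n\Z\times\Z/n\Z$ with $a,b$ mapping to the two standard generators $\alpha,\beta$, the group $R_{\mathrm{Fer}_n}$ is exactly the kernel of the composite $F_2\xrightarrow{\mathrm{ab}}\Z^2\xrightarrow{\bmod n}(\Z/n\Z)^2$. Equivalently, writing $\mathrm{ab}\colon F_2\to F_2^{\mathrm{ab}}\cong\Z^2$ for the abelianization (exponent-sum) map, we have $R_{\mathrm{Fer}_n}=\mathrm{ab}^{-1}(n\Z^2)$, where $n\Z^2=n\Z\times n\Z$. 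Concretely, $R_{\mathrm{Fer}_n}$ is the set of words whose total $a$-exponent and total $b$-exponent are each divisible by $n$; this is consistent with the generating set $\langle a^n,b^n,[a,b]\rangle$, whose generators have exponent-sum vectors $(n,0),(0,n),(0,0)$.

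Next I would use that the commutator subgroup $F_2'=\ker(\mathrm{ab})$ is characteristic (indeed fully invariant) in $F_2$, so every $\sigma\in\mathrm{Aut}(F_2)$ descends to an automorphism $\bar\sigma$ of $F_2^{\mathrm{ab}}\cong\Z^2$; that is, $\bar\sigma\in\mathrm{GL}_2(\Z)$ and $\mathrm{ab}\circ\sigma=\bar\sigma\circ\mathrm{ab}$. The crucial point is then that $n\Z^2$ is characteristic in $\Z^2$: for any $\phi\in\mathrm{GL}_2(\Z)$ and any $v\in\Z^2$ one has $\phi(nv)=n\phi(v)$, whence $\phi(n\Z^2)=n\Z^2$.

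Finally I would combine these observations. From $\mathrm{ab}\circ\sigma=\bar\sigma\circ\mathrm{ab}$ together with the invertibility of $\sigma$ and $\bar\sigma$, one gets the set-theoretic identity $\sigma(\mathrm{ab}^{-1}(A))=\mathrm{ab}^{-1}(\bar\sigma(A))$ for any subset $A\subseteq\Z^2$. Applying this to $A=n\Z^2$ yields
\[
\sigma(R_{\mathrm{Fer}_n})=\sigma(\mathrm{ab}^{-1}(n\Z^2))=\mathrm{ab}^{-1}(\bar\sigma(n\Z^2))=\mathrm{ab}^{-1}(n\Z^2)=R_{\mathrm{Fer}_n},
\]
which proves that $R_{\mathrm{Fer}_n}$ is characteristic. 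There is essentially no hard obstacle here; the only points requiring a moment's care are verifying the identity $\sigma(\mathrm{ab}^{-1}(A))=\mathrm{ab}^{-1}(\bar\sigma(A))$, and making sure that the relevant sublattice is genuinely $n\Z^2$ (the ``divisible by $n$'' subgroup) rather than some non-characteristic sublattice of index $n^2$. It is precisely because the quotient $H_0$ is the symmetric group $(\Z/n\Z)^2$ that characteristicity holds.
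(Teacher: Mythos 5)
Your proof is correct, and it takes a genuinely different route from the paper's. The paper argues on a generating set of $\mathrm{Aut}(F_2)$: it recalls that $R_{\mathrm{Fer}_n}$ is normally generated by $a^n$, $b^n$, $[a,b]$, that $\mathrm{Aut}(F_2)$ is generated by the Nielsen transformations $n_i$ and $n_{ij}$, and then asserts that each Nielsen generator visibly preserves this normal generating set (e.g.\ $n_{ij}$ sends $a^n$ to $(ab)^n$, which lies in $R_{\mathrm{Fer}_n}$ because it is congruent to $a^n b^n$ modulo the normal closure of $[a,b]$ --- a small verification the paper leaves as ``evident''). You instead characterize $R_{\mathrm{Fer}_n}$ intrinsically as $\mathrm{ab}^{-1}(n\Z^2)$, the set of words whose $a$- and $b$-exponent sums are both divisible by $n$, and deduce invariance from the full invariance of $F_2'=\ker(\mathrm{ab})$ together with the fact that $n\Z^2$ is preserved by every element of $\mathrm{GL}_2(\Z)$; all the intermediate identities you flag (the descent $\mathrm{ab}\circ\sigma=\bar\sigma\circ\mathrm{ab}$ and $\sigma(\mathrm{ab}^{-1}(A))=\mathrm{ab}^{-1}(\bar\sigma(A))$) do hold. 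What your approach buys: it avoids invoking the Nielsen generation of $\mathrm{Aut}(F_2)$ and the case-by-case check, it actually proves the stronger statement that $R_{\mathrm{Fer}_n}$ is fully invariant (it is a verbal subgroup, generated by the values of the words $x^n$ and $[x,y]$), and it generalizes verbatim to the kernels $R_{k,s-1}$ of $F_{s-1}\to(\Z/k\Z)^{s-1}$ appearing later in the paper. One cosmetic slip: in your last sentence $(\Z/n\Z)^2$ is not ``the symmetric group''; what you mean --- correctly --- is that the relevant index-$n^2$ sublattice is the canonical one $n\Z^2$, which is characteristic, rather than an arbitrary sublattice of that index.
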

\begin{proof}
The group $R_{\mathrm{Fer}_n} \subset F_2=\langle a,b \rangle$, can be generated by the elements $ a^n, b^n, [a,b]$.
The automorphism group of the free group $F_n$, and in particular of $F_2$, is generated by Nielsen transformations $n_i$ and $n_{ij}$ \cite[th. 1.5 p. 125]{bogoGrp} which are defined as follows: The automorphism $n_i$ sends a free generator $x_i \mapsto x_i^{-1}$ and leaves all other generators unchanged while the automorphism $n_{ij}$ sends $x_i \mapsto x_i x_j$ and leaves all other generators unchanged. It is evident from the relations of $R_{\mathrm{Fer}_n}$, see also lemma \ref{lemma7}, that  $n_i(R_{\mathrm{Fer}_n})=R_{\mathrm{Fer}_n}$  and $n_{ij}(R_{\mathrm{Fer}_n})=R_{\mathrm{Fer}_n}$.  
\end{proof}

We also compute: 
% {\color{red} (This seems different form Ihara's computation on page 65)}
\[
\overline{a^i b^j b}=
\begin{cases}
a^i b^{j+1} & \text{ if } j < n-1 \\
a^i & \text{ if } j =n-1
\end{cases}
\]
and
\[
\overline{a^i b^j a}=
\begin{cases}
a^{i+1} b^{j} & \text{ if } i < n-1 \\
b^j & \text{ if } i =n-1
\end{cases}
\]
Thus
\begin{eqnarray*}
a^ib^j b \left(\overline{a^i b^j b}\right)^{-1}
& = &
\begin{cases}
a^i b^j b b^{-j-1}  a^{-i}=1 & \text{ if } j < n-1 \\
a^{i} b^{n} a^{-i} & \text{ if } j=n-1
\end{cases}
\\
a^ib^j a \left(\overline{a^i b^j a}\right)^{-1}
&= &
\begin{cases}
a^i b^j a b^{-j}  a^{-i-1} & \text{ if } i < n-1, j\neq 0 \\
1 & \text{ if }  i<n-1,j=0,\\
a^{n-1} b^{j} a b^{-j} & \text{ if } i=n-1
\end{cases}
\end{eqnarray*}
Consider the generators $ \alpha=a R_{\mathrm{Fer}_n},\beta=b R_{\mathrm{Fer}_n}$ of the  group $\mathbb{Z}/n \mathbb{Z} \times \mathbb{Z}/n\mathbb{Z}$.
Observe that there is a well defined action of $\alpha$ (resp. $\beta$) on
$R_{\mathrm{Fer}_n}/R_{\mathrm{Fer}_n}'$ given by conjugation, i.e.
\[
x^{\alpha}=x^a=axa^{-1} \qquad x^{\beta}=x^b=bxb^{-1}
\]
for all $x\in R_{\mathrm{Fer}_n}/R_{\mathrm{Fer}_n}'$. Notice that this is indeed an action which
implies that
\[
(x^{\alpha})^\beta=x^{\alpha \beta}=x^{\beta\alpha}=(x^\beta)^\alpha
\]
i.e. the actions of $\alpha$ and $\beta$ commute.

\begin{lemma}
\label{lemma7}
 % Using the conjugation action of $\mathbb{Z}/n\mathbb{Z}=\langle \alpha \rangle$ we can express 
 The generators  of the free group $R_{\mathrm{Fer}_n}$ as union of the tree following sets:
\begin{align*}
A_1 &=  \left\{ (b^n)^{a^i}: 0 \leq i \leq n-1 \right\},
&  \#A_1 &= n
\\
A_2 & =  \left\{ [b^j,a]^{a^{i}}: 1 \leq j \leq n-1, 0 \leq i \leq n-2\right\} & \#A_2 &= (n-1)^2
 \\
A_3 & =  \{a ^n [a^{-1},b^j]: 0\leq j \leq n-1 \}
&  \#A_3 &= n
\end{align*}
\end{lemma}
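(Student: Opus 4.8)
The plan is to apply Schreier's lemma directly, using the transversal $T=\{a^ib^j:0\le i,j\le n-1\}$ already fixed above. First I would check that $T$ is a genuine Schreier transversal: it is prefix-closed, since every initial segment of a reduced word $a^ib^j$ is again of the form $a^{i'}$ or $a^ib^{j'}$ and hence lies in $T$; and the map $a^ib^j\mapsto(\alpha^i,\beta^j)$ is a bijection $T\to H_0$, so each coset of $R_{\mathrm{Fer}_n}$ contains exactly one element of $T$. Schreier's lemma then asserts that $R_{\mathrm{Fer}_n}$ is \emph{freely} generated by the nontrivial elements $\gamma(t,x)=tx\,\overline{tx}^{-1}$, with $t\in T$ and $x\in\{a,b\}$. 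Thus freeness will come for free, and the entire content of the lemma is the explicit identification of these generators.

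Next I would run the case analysis on $\overline{tx}$, which is already recorded just before the statement. Because $H_0$ is abelian, the coset of $a^ib^jx$ is obtained by reducing exponents modulo $n$, and the two relevant computations give $\overline{a^ib^jb}$ equal to $a^ib^{j+1}$ for $j<n-1$ and to $a^i$ for $j=n-1$, and $\overline{a^ib^ja}$ equal to $a^{i+1}b^j$ for $i<n-1$ and to $b^j$ for $i=n-1$. Substituting into $\gamma(t,x)$, the generator is trivial except in three situations, which I would organize as the three families:
\[
x=b,\ j=n-1:\quad a^ib^na^{-i};\qquad
x=a,\ i<n-1,\ j\neq 0:\quad a^ib^jab^{-j}a^{-i-1};\qquad
x=a,\ i=n-1:\quad a^{n-1}b^jab^{-j}.
\]

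Then I would rewrite each family in commutator form using only the conventions $x^y=yxy^{-1}$ and $[x,y]=xyx^{-1}y^{-1}$. These are one-line identities: $a^ib^na^{-i}=(b^n)^{a^i}$ gives $A_1$ for $0\le i\le n-1$; from $[b^j,a]^{a^i}=a^i b^j a b^{-j}a^{-1}a^{-i}=a^ib^jab^{-j}a^{-i-1}$ the second family is exactly $A_2$, with $0\le i\le n-2$ and $1\le j\le n-1$; and from $a^n[a^{-1},b^j]=a^n a^{-1}b^jab^{-j}=a^{n-1}b^jab^{-j}$ the third family is $A_3$, with $0\le j\le n-1$. As a consistency check I would confirm the counts, $\#A_1+\#A_2+\#A_3=n+(n-1)^2+n=n^2+1$, which matches the Nielsen–Schreier rank $1+[F_2:R_{\mathrm{Fer}_n}](2-1)=1+n^2$ of a finite-index-$n^2$ subgroup of $F_2$; this independently verifies that no nontrivial generator has been missed or double-counted.

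The hard part will not be any single algebraic identity — those are immediate — but the careful bookkeeping at the boundary indices. Specifically, I would need to track exactly when $\gamma(t,x)=1$ (the cases $x=b,\ j<n-1$; $x=a,\ i<n-1,\ j=0$), handle the wrap-around $\overline{a^ib^n}=a^i$ and $\overline{a^nb^j}=b^j$ coming from $\alpha^n=\beta^n=1$, and make sure the surviving index ranges in $A_1,A_2,A_3$ are precisely the claimed ones with no overlap. Getting these ranges right is what makes the total come out to $n^2+1$, and it is where the only real risk of error lies.
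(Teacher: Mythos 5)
Your proposal is correct and follows essentially the same route as the paper: Schreier's lemma applied to the transversal $T=\{a^ib^j\}$, the same case analysis of $\overline{tx}$ (which the paper records immediately before the lemma), the same rewriting into the commutator forms $(b^n)^{a^i}$, $[b^j,a]^{a^i}$, $a^n[a^{-1},b^j]$, and the same verification that the count $n+(n-1)^2+n=n^2+1$ matches the Nielsen--Schreier rank. No gaps.
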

\begin{proof}
This is a direct consequence of the Schreier lemma. Notice also that 
the above given sets together give rise to  $n^2+1$ generators as predicted by Schreier index formula. Indeed, we compute
$
\#A_1+ \#A_2 + \#A_3 = n+(n-1)^2+ n=n^2+1
% & =& n^2.
$.
\end{proof}

\begin{lemma}
Fix $0\leq i \leq n-2$. We will prove that the $\mathbb{Z}$-module generated by
the elements 
\[\Sigma_1(i):=\{[b^j,a]^{\alpha^i}, 1\leq j \leq n-1 \}
\]
is the same 
 as 
the $\mathbb{Z}$-module generated by the elements
\[
\Sigma_2(i):=\{[b,a]^{\alpha^i\beta^j}, 1\leq j \leq n-2\}.
\]
\end{lemma}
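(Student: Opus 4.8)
The plan is to reduce both generating sets to the single family of ``basic'' commutators $c_m:=[b,a]^{\alpha^i\beta^m}$ and then to compare the two sets through a triangular change of generators. First I would apply Lemma~\ref{com-lem-ide}(\ref{com-ide1}) with $x=b$, $y=a$, which in the free group $F_2$ reads $[b^j,a]=[b,a]^{b^{j-1}}[b,a]^{b^{j-2}}\cdots[b,a]^{b}[b,a]$. Projecting to the abelian group $R_{\mathrm{Fer}_n}/R_{\mathrm{Fer}_n}'$ and using that conjugation by $b$ realizes the action of $\beta$ while conjugation by $a^i$ realizes $\alpha^i$, this collapses (the order of factors being irrelevant once abelianized) to the telescoping expression
\[
[b^j,a]^{\alpha^i}=\sum_{m=0}^{j-1}[b,a]^{\alpha^i\beta^m}=\sum_{m=0}^{j-1}c_m,\qquad 1\le j\le n-1.
\]
Thus the elements of $\Sigma_1(i)$ are precisely the partial sums $S_j=c_0+c_1+\cdots+c_{j-1}$, for $1\le j\le n-1$.

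Next I would observe that the passage from the partial sums $\{S_1,\dots,S_{n-1}\}$ to the summands $\{c_0,\dots,c_{n-2}\}$ is \emph{unitriangular}, hence invertible over $\Z$: indeed $S_1=c_0$ and $S_{j}-S_{j-1}=c_{j-1}$ for $2\le j\le n-1$, so every basic commutator $c_m$ is recovered from $\Sigma_1(i)$ by successive differences, while conversely each $S_j$ is a $\Z$-combination of the $c_m$. Consequently the $\Z$-module generated by $\Sigma_1(i)$ coincides with the $\Z$-module generated by $\{c_0,\dots,c_{n-2}\}=\{[b,a]^{\alpha^i\beta^m}:0\le m\le n-2\}$, which is exactly $\langle\Sigma_2(i)\rangle$. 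This gives the asserted equality without computing any rank or relation explicitly; only the identity of Lemma~\ref{com-lem-ide} and the definition of the $H_0$-action are used.

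The step I expect to be the main obstacle is the \emph{index bookkeeping at the two ends of the range}, and it is worth isolating precisely why the telescoping stops where it does. One might hope to ``wrap around'' and replace $c_0$ by the remaining $c_m$ via a cyclic relation, but the full sum $c_0+\cdots+c_{n-1}$ is \emph{not} zero in $R_{\mathrm{Fer}_n}/R_{\mathrm{Fer}_n}'$: applying Lemma~\ref{com-lem-ide}(\ref{com-ide1}) with $j=n$ gives $c_0+\cdots+c_{n-1}=[b^n,a]^{\alpha^i}=(1-\alpha)\alpha^i\cdot b^n$, which lies in the $\langle a^n,b^n\rangle$-part of the module, i.e.\ in the kernel of the map onto $\Z[H_0]/I$ in the exact sequence of the main theorem. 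Hence $c_{n-1}$ cannot be traded for $c_0,\dots,c_{n-2}$, the correct index range for the basic commutators is $0\le m\le n-2$, and the two extreme generators $S_1=c_0$ and $S_{n-1}=c_0+\cdots+c_{n-2}$ must be checked to contribute exactly $c_0$ and $c_{n-2}$ with no summand lost or gained. This boundary analysis is the only place where the argument has to be written with care.
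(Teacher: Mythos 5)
Your proof is correct and takes essentially the same route as the paper: both apply lemma \ref{com-lem-ide}(\ref{com-ide1}) to rewrite $[b^j,a]^{\alpha^i}$ as the partial sum $\sum_{m=0}^{j-1}[b,a]^{\alpha^i\beta^m}$ and then observe that the resulting change of generators is unitriangular (the paper exhibits the lower-triangular matrix of $1$'s, you take successive differences), hence invertible over $\Z$. Note that, exactly like the paper's own proof, you implicitly read the index range in $\Sigma_2(i)$ as $0\le j\le n-2$ so that both sets have $n-1$ elements; the printed range $1\le j\le n-2$ is a typo, and your closing remark that $\sum_{m=0}^{n-1}[b,a]^{\alpha^i\beta^m}=[b^n,a]^{\alpha^i}=(1-\alpha)\alpha^i b^n\neq 0$ is a correct (if optional) justification of why the range cannot be shifted cyclically.
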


\begin{proof}
We will use additive notation here. 
By lemma 
 \ref{com-lem-ide}(\ref{com-ide1})  for
% \deleted[id=AK,comment=18/2/2019]{
% \begin{align*}
%   [b,a]^{\alpha^i} &= [b,a]^{\alpha^i} \\
%   [b^2,a]^{\alpha^i} &=[b,a]^{\beta\alpha^i} + [b,a]^{\alpha^i} \\
%   \cdots &= \cdots\\
%   [b^{n-1},a] &= [b,a]^{\beta^{n-2}\alpha^i}+\cdots+ [b,a]^{\beta \alpha^i}+[b,a]^{\alpha^i},
% \end{align*}
% }
% i.e. for 
 $1\leq j \leq n-1$ and $0\leq i \leq n-2$ we have 
\begin{equation}
\label{eqforceAug}
[b^j,a]^{\alpha^i}=[b,a]^{( \beta^{j-1}+\beta^{j-2}
 +\cdots +\beta+1 )\alpha^i}.
\end{equation}
Similarly to eq. (\ref{eqforceAug})
\begin{equation}
\label{eqforceAug1}
[a^j,b]^{\beta^i}=[a,b]^{( \alpha^{j-1}+\alpha^{j-2}
 +\cdots +\alpha+1 )\beta^i}.
\end{equation}

This proves that the elements of the set $\Sigma_1 (i)$ are transformed to the elements of the set $\Sigma_2(i)$ in terms of an invertible block matrix where each block  is  the invertible $(n-1)\times (n-1)$ matrix with entries in $\mathbb{Z}$:
\[
\begin{pmatrix}
1 & 0 & \cdots & 0 \\
1 & 1 & \ddots & \vdots \\
\vdots & \ddots & \ddots &0 \\
1 & \cdots &  1 & 1
\end{pmatrix}
\]
Therefore $\Sigma_1 (i)$ and $\Sigma_2 (i)$ generate the same $\mathbb{Z}$-module.
\end{proof}

Notice also that
\begin{align}
\label{power-a}
(a^n)^{\beta^j} &= b^j a^{n-1} b^{-j}a^{-n+1} \cdot a^{n-1} b^j a b^{-j}=[b^j,a^{n-1}]+\underbrace{a^{n-1}b^{j}ab^{-j}}_{\in A_3} 
\\ \nonumber
 &= [b^j,a]^{\alpha^{n-2}+\alpha^{n-3}+ \cdots +\alpha+1} +a^{n-1}b^{j}ab^{-j}.
\end{align}
Set $\Sigma_1= \cup_i \Sigma_1(i)$. 
The above computation shows that $(a^n)^{\beta^j}$ can be written as a $\mathbb{Z}$-linear combination of elements of $ \Sigma_1$ (which generate $A_2$) and $A_3$. Moreover
\[
a^{n}[a^{-1},b^j]=(a^n)^{\beta^j}-[b,a]^{
\left(\sum_{k=0}^{j-1} \beta^k \right)\left(\sum_{\lambda=0}^{n-2}
\alpha^\lambda \right).
}
\]
We have shown that 
\begin{lemma}
\label{homoRF}
The free $\mathbb{Z}$-module $R_{\mathrm{Fer}_n}/R_{\mathrm{Fer}_n}'$ can be generated by the $n^2+1$ elements
\[
(a^n)^{\beta^i}, (b^n)^{\alpha^i}, 0\leq i \leq n-1 \text{ and }
[a,b]^{\alpha^i \cdot\beta^j}, 0\leq i,j \leq n-2.
\]
\end{lemma}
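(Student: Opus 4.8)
The plan is to show that the proposed list of $n^2+1$ elements spans the same $\mathbb{Z}$-submodule of $R_{\mathrm{Fer}_n}/R_{\mathrm{Fer}_n}'$ as the Schreier basis $A_1\cup A_2\cup A_3$ produced in Lemma \ref{lemma7}. Since that lemma exhibits $R_{\mathrm{Fer}_n}/R_{\mathrm{Fer}_n}'$ as a free $\mathbb{Z}$-module of rank $n^2+1$, and the proposed list again has exactly $n^2+1$ entries, it suffices to verify that each of $A_1$, $A_2$, $A_3$ lies in the span of the new list; a generating set of a free $\mathbb{Z}$-module of cardinality equal to its rank is then automatically a basis, so nothing more is needed. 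The whole argument is therefore a triangular change of generators, handled one block at a time.

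First I would dispose of $A_1$ and $A_2$. The set $A_1=\{(b^n)^{a^i}\}$ is literally the sublist $\{(b^n)^{\alpha^i}:0\le i\le n-1\}$, so nothing is required there. For $A_2$ I would invoke the lemma comparing $\Sigma_1(i)$ and $\Sigma_2(i)$: by eq.~(\ref{eqforceAug}) the passage, for each fixed $i$, from $\{[b^j,a]^{\alpha^i}\}_{1\le j\le n-1}$ to $\{[b,a]^{\alpha^i\beta^j}\}_{0\le j\le n-2}$ is governed by the unipotent lower-triangular matrix displayed there, which has determinant $1$ and is hence invertible over $\mathbb{Z}$. Since $[b,a]=-[a,b]$ in the abelianization, these generators agree up to sign with the proposed block $\{[a,b]^{\alpha^i\beta^j}:0\le i,j\le n-2\}$, so $A_2$ and this block span the same submodule.

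The essential step is $A_3$, and I expect the only genuine bookkeeping to live here. I would start from eq.~(\ref{power-a}), which rewrites $(a^n)^{\beta^j}$ as $[b^j,a]^{1+\alpha+\cdots+\alpha^{n-2}}$ plus the term $a^{n-1}b^jab^{-j}$; the point is that this last term is precisely the $j$-th element $a^n[a^{-1},b^j]$ of $A_3$. Solving for it yields
\[
a^n[a^{-1},b^j]=(a^n)^{\beta^j}-[b,a]^{\left(\sum_{k=0}^{j-1}\beta^k\right)\left(\sum_{\lambda=0}^{n-2}\alpha^\lambda\right)},
\]
which is the displayed identity preceding the statement. Here I would check the index ranges carefully: as $j$ runs over $0,\ldots,n-1$ the exponents $\alpha^\lambda\beta^k$ appearing on the right satisfy $0\le\lambda\le n-2$ and $0\le k\le j-1\le n-2$, so the subtracted term lies in the span of the commutator block already treated, while $(a^n)^{\beta^j}$ is itself one of the proposed generators. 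Hence every element of $A_3$ lies in the new span.

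Combining the three blocks, the span of the proposed list contains $A_1\cup A_2\cup A_3$ and therefore equals all of $R_{\mathrm{Fer}_n}/R_{\mathrm{Fer}_n}'$, which proves the lemma; by the rank count it is in fact a free basis. The two subtleties worth flagging are, first, the well-definedness of $(a^n)^{\beta^j}$ as an element of $R_{\mathrm{Fer}_n}/R_{\mathrm{Fer}_n}'$ — this is exactly the conjugation action noted before the statement, and eq.~(\ref{power-a}) makes its membership in the module explicit — and second, the verification that the commutator exponents never leave the square $0\le\lambda,k\le n-2$, which is precisely what keeps the new list at the predicted size $n^2+1$ rather than overshooting it.
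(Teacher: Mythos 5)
Your proposal is correct and follows essentially the same route as the paper: the paper also passes from the Schreier basis $A_1\cup A_2\cup A_3$ of Lemma \ref{lemma7} to the new generators by (i) identifying $A_1$ directly, (ii) using the unipotent change of basis of eq.~(\ref{eqforceAug}) between $\Sigma_1(i)$ and $\Sigma_2(i)$ for the block $A_2$, and (iii) using eq.~(\ref{power-a}) and the displayed identity for $a^n[a^{-1},b^j]$ to handle $A_3$. Your additional observations (the correct index range $0\le j\le n-2$ for $\Sigma_2(i)$ and the remark that a generating set of cardinality equal to the rank of a free $\mathbb{Z}$-module is automatically a basis) are sound and only tighten the paper's argument.
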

% \highlight[id=AK,comment=18/2/2019]
%
%
\subsection{Structure as a ${\Z/n\Z \times \Z/n\Z}$-module}
\label{sec:2.2}

We can now consider the homology group as the rank $n^2+1$ free $\mathbb{Z}$-module $R_{\mathrm{Fer}_n}/R_{\mathrm{Fer}_n}'$.
By lemma \ref{characterRF}
 $R_{\mathrm{Fer}_n}$ is a characteristic subgroup, so 
the group  $ H_0=\mathbb{Z}/n\mathbb{Z} \times \mathbb{Z}/n\mathbb{Z}=\langle \alpha \rangle \times \langle \beta \rangle$ acts on $R_{\mathrm{Fer}_n}/R_{\mathrm{Fer}_n}'$ by conjugation making $R_{\mathrm{Fer}_n}/R_{\mathrm{Fer}_n}'$ a $ H_0$-module.

For a finite group $G$ the coaugmenation ideal $J_G$ is defined
 as the quotient $J_G=\Z[G]/\langle \sum_{g\in G} g\rangle$.

\begin{lemma}
Set $H_0=\Z/n\Z \times \Z/n\Z$.
The module $R_{\mathrm{Fer}_n}/R_{\mathrm{Fer}_n}'$ is generated 
 as a $\Z[H_0]$-module by the elements $a^n,b^n,[a,b]$.  
An isomorphic image of the module $R_{\mathrm{Fer}_n}/R_{\mathrm{Fer}_n}'$ fits in the short exact 
sequence
\[
0 \rightarrow 
\Z[\langle \alpha \rangle] 
\bigoplus \Z[\langle \beta \rangle]
\rightarrow 
R_{\mathrm{Fer}_n}/R_{\mathrm{Fer}_n}' 
\rightarrow
\Z[H_0]/I 
\rightarrow 
0,
\]
% \[
% {\color{red} \cong}
% \Z[\langle \alpha \rangle] \bigoplus \Z[\langle \beta \rangle] \bigoplus \Z[H_0]/I,
% \]
where $I$ is the ideal of $\Z[H_0]$ generated by $\sum_{i=0}^{n-1} \alpha^i, \sum_{i=0}^{n-1} \beta^i$, or equivalently 
\[
\Z[H_0]/I \cong J_{\langle \alpha \rangle} \otimes J_{\langle \beta \rangle}.
\]
% \highlight[id=AK,comment=17/3/2019]{Coaugmentation instead of augmentation}
\end{lemma}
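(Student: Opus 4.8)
The plan is to bootstrap everything from the free $\Z$-basis of $R_{\mathrm{Fer}_n}/R_{\mathrm{Fer}_n}'$ produced in Lemma~\ref{homoRF}, namely the $n^2+1$ elements $(a^n)^{\beta^i}$, $(b^n)^{\alpha^i}$ ($0\le i\le n-1$) and $[a,b]^{\alpha^i\beta^j}$ ($0\le i,j\le n-2$). Since these are exactly the $H_0$-translates of $a^n,b^n,[a,b]$, the first assertion—that $R_{\mathrm{Fer}_n}/R_{\mathrm{Fer}_n}'$ is generated as a $\Z[H_0]$-module by $a^n,b^n,[a,b]$—is immediate. First I would isolate the two cyclic submodules. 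Because conjugation gives $(a^n)^\alpha=a\,a^n a^{-1}=a^n$, the element $\alpha$ fixes $a^n$, so the submodule $M_a:=\Z[H_0]\cdot a^n$ is already cyclic over $\Z[\langle\beta\rangle]$ with $\Z$-basis $\{(a^n)^{\beta^j}\}_{j=0}^{n-1}$; hence $M_a\cong\Z[\langle\beta\rangle]$. Symmetrically $\beta$ fixes $b^n$ and $M_b:=\Z[H_0]\cdot b^n\cong\Z[\langle\alpha\rangle]$. As the basis vectors spanning $M_a$ and those spanning $M_b$ are disjoint subsets of the basis of Lemma~\ref{homoRF}, we get $M_a\cap M_b=0$, and $M_a\oplus M_b$ is the free direct summand spanned by the $2n$ vectors $(a^n)^{\beta^j},(b^n)^{\alpha^i}$.

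It then follows that the quotient $Q:=\big(R_{\mathrm{Fer}_n}/R_{\mathrm{Fer}_n}'\big)/(M_a\oplus M_b)$ is $\Z$-free with basis the images of the remaining $(n-1)^2$ vectors $[a,b]^{\alpha^i\beta^j}$, and is generated over $\Z[H_0]$ by the single image of $[a,b]$. The technical heart of the proof is the computation of two ``norm'' relations in $R_{\mathrm{Fer}_n}/R_{\mathrm{Fer}_n}'$. Writing $N_\alpha=\sum_{i=0}^{n-1}\alpha^i$ and $N_\beta=\sum_{i=0}^{n-1}\beta^i$, and using that $a^n,b^n\in R_{\mathrm{Fer}_n}$, Lemma~\ref{com-lem-ide}(\ref{com-ide1}) applied to $[a^n,b]=a^n\cdot(a^{-n})^{\beta}$ yields, in additive notation,
\[
N_\alpha\cdot[a,b]=[a^n,b]=(1-\beta)\,a^n,
\]
while the second identity of Lemma~\ref{com-lem-ide} applied to $[a,b^n]=(b^n)^\alpha\cdot(b^n)^{-1}$ yields
\[
N_\beta\cdot[a,b]=[a,b^n]=(\alpha-1)\,b^n.
\]
Getting the conjugation conventions and signs in these two identities exactly right, and confirming that $M_a\oplus M_b$ is genuinely the summand cut out by a subset of the basis, is the step I expect to be the main obstacle; everything afterwards is formal.

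With these relations in hand I would finish as follows. Since $(1-\beta)a^n\in M_a$ and $(\alpha-1)b^n\in M_b$, both right-hand sides vanish in $Q$, so the $\Z[H_0]$-linear surjection $\Z[H_0]\twoheadrightarrow Q$ sending $1$ to the image of $[a,b]$ annihilates $N_\alpha$ and $N_\beta$, hence factors through a surjection $\bar\phi\colon \Z[H_0]/I\twoheadrightarrow Q$, where $I=(N_\alpha,N_\beta)$. Separately, from $\Z[H_0]=\Z[\langle\alpha\rangle]\otimes_\Z\Z[\langle\beta\rangle]$ and right-exactness of $\otimes$ one reads off
\[
\Z[H_0]/I\cong J_{\langle\alpha\rangle}\otimes_\Z J_{\langle\beta\rangle},
\]
which is $\Z$-free of rank $(n-1)^2$ because each coaugmentation module $J_{\langle\alpha\rangle}\cong\Z[t]/(1+t+\cdots+t^{n-1})$ is free of rank $n-1$. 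Thus $\bar\phi$ is a surjection of free $\Z$-modules of equal finite rank $(n-1)^2$, and is therefore an isomorphism. Combining the inclusion $M_a\oplus M_b\hookrightarrow R_{\mathrm{Fer}_n}/R_{\mathrm{Fer}_n}'$ (with $M_a\oplus M_b\cong\Z[\langle\beta\rangle]\oplus\Z[\langle\alpha\rangle]$) with the identification $Q\cong\Z[H_0]/I$ gives exactly the asserted short exact sequence, completing the proof.
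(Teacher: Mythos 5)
Your proof is correct and follows essentially the same route as the paper: identify $\Z[H_0]a^n\oplus\Z[H_0]b^n\cong\Z[\langle\beta\rangle]\oplus\Z[\langle\alpha\rangle]$ as a submodule cut out by part of the free basis of Lemma \ref{homoRF}, then use the commutator identities of Lemma \ref{com-lem-ide} to show the norm elements $N_\alpha,N_\beta$ kill $[a,b]$ in the quotient. The only addition is that you make explicit the final step (the surjection $\Z[H_0]/I\twoheadrightarrow Q$ is an isomorphism by comparing $\Z$-ranks, both being $(n-1)^2$), which the paper leaves implicit.
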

\begin{proof}
By the  $\Z$-basis given in lemma \ref{homoRF}
it is evident that $a^n,b^n,[a,b]$ indeed generate $R_{\mathrm{Fer}_n}/R_{\mathrm{Fer}_n}'$. 
The elements $a^n,b^n$ are acted by the groups $\langle \alpha \rangle$, $\langle \beta \rangle$ and form a $\mathbb{Z}[H_0]$-submodule of $R_{\mathrm{Fer}_n}/R_{\mathrm{Fer}_n}'$ isomorphic to $\Z[\langle \alpha \rangle] 
\bigoplus \Z[\langle \beta \rangle]$. Indeed, since the action of $\alpha$ on $a$ is trivial we can identify the elements in $\Z[H_0] a^n$ to the set of elements of $\Z[\langle \beta \rangle]$ and $\Z[H_0] b^n$ can be similarly identified to $\Z[\langle \alpha \rangle]$. Notice also that $\Z[H_0]a^n \cap \Z[H_0] b^n=\{0\}$.

Observe now that the elements $[a,b]^{\alpha^i \beta^j}$ are subject to the condition given in eq. (\ref{eqforceAug}) which implies that for all $i$
\begin{equation}
\label{beta-quot-zero}
[a,b]^{\alpha^i (1+\beta+\beta^2+\cdots 
 +
\beta^{n-1})}=[a,b^n]^{\alpha^i}= a^{i+1} b^n a^{-i-1} -a^ib^n a^{-i} \in \Z[\langle \alpha \rangle] 
\bigoplus \Z[\langle \beta \rangle].
\end{equation}
In the above formula we have used the additive structure of $R_{\mathrm{Fer}_n}/R_{\mathrm{Fer}_n}'$. 
Equation (\ref{beta-quot-zero}) shows that 
the operator $1+\beta+\cdots+\beta^{n-1}$ in the quotient of $R_{\mathrm{Fer}_n}/R_{\mathrm{Fer}_n}'$ by 
$\Z[\langle \alpha \rangle] 
\bigoplus \Z[\langle \beta \rangle]$
is zero. 
 Similarly, by  eq. (\ref{eqforceAug1}) we obtain that  $\sum_{i=0}^{n-1} \alpha^i=0$ in the quotient.
Therefore, in the $\Z[H_0]$-module generated by
 $[a,b]$ we have that $1+\beta+\cdots+\beta^{n-1}$ and $1+\alpha+\cdots+\alpha^{n-1}$ both annihilate $[a,b]$.
We compute that  
\[
\Z[H_0]/\langle 1+\beta+\beta^2+\cdots 
 + 
\beta^{n-1} \rangle
= \bigoplus_{i=0}^{n-1} \alpha^i J_{\langle \beta \rangle}.
\]
The result follows.  
\end{proof}
\begin{remark}
The theorem of Maschke implies that after a scalar extension to $\Q$ we have 
\[
R_{\mathrm{Fer}_n}/R_{\mathrm{Fer}_n}' \otimes_\Z \Q
 \cong
\Q[\langle \alpha \rangle] \bigoplus \Q[\langle \beta \rangle] \bigoplus \Q[H_0]/I,
\]
\end{remark}

We will now prove that in $R_{\mathrm{Fer}_n}/R_{\mathrm{Fer}_n}'$ there are exacty $3n$ elements which are fixed by an element of $\mathbb{Z}/n\mathbb{Z} \times \mathbb{Z}/n\mathbb{Z}$.

First note, the $2n$ elements
% The $2n$ elements are
 $(a^n)^{\beta^i}$ (respectively  $(b^n)^{\alpha^i}$)  are fixed by $\langle \alpha \rangle$ (respectively 
   $\langle \beta \rangle$).

The other $n$-elements are the elements $((ab)^n)^{\alpha^i}$ which are fixed by $\langle ab \rangle$. 
\begin{lemma}
We can write the elements $((ab)^n)^{\alpha^i}$ as follows:
\[
(ab)^n = [b,a]^{
\alpha^{n-1} \left(\sum_{\nu=0}^{n-2} \beta^\nu \right)
+ \cdots+
\alpha^2(\beta+1) +
\alpha
}
a^n  b^n.
\]
\end{lemma}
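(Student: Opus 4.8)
The plan is to establish, by induction on $n$, an exact identity in the free group $F_2$ of the shape $(ab)^n = w_n \cdot a^n b^n$, where $w_n$ is an explicit product of conjugates of $[b,a]$, and then to read this identity in the abelian $H_0$-module $R_{\mathrm{Fer}_n}/R_{\mathrm{Fer}_n}'$. First I would record that the statement is meaningful: since the image of $ab$ in $H_0 = F_2/R_{\mathrm{Fer}_n}$ is $\alpha\beta$ and $(\alpha\beta)^n=\alpha^n\beta^n=1$, we have $(ab)^n\in R_{\mathrm{Fer}_n}$; likewise $a^n,b^n,[b,a]=[a,b]^{-1}\in R_{\mathrm{Fer}_n}$. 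Because $R_{\mathrm{Fer}_n}$ is normal (indeed characteristic, by Lemma \ref{characterRF}), conjugation of any element of $R_{\mathrm{Fer}_n}$ by $a$ or $b$ stays in $R_{\mathrm{Fer}_n}$ and descends, modulo $R_{\mathrm{Fer}_n}'$, to the $H_0$-action recorded in the exponents (additively in the module). Thus, once the group-level identity is in place, reducing modulo $R_{\mathrm{Fer}_n}'$ turns every product of conjugates of the single generator $[b,a]$ into a $\Z[H_0]$-combination of $[b,a]$.

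The one commutation rule I need is the exact identity $b^m a = [b^m,a]\,a b^m$, together with Lemma \ref{com-lem-ide}(\ref{com-ide1}) applied with $x=b,\ y=a$, which expands $[b^m,a]=[b,a]^{b^{m-1}}[b,a]^{b^{m-2}}\cdots[b,a]^b[b,a]$. Reading this product in the module, the class of $[b^m,a]$ equals $[b,a]^{\sum_{\nu=0}^{m-1}\beta^\nu}$; this is the only place where a nontrivial exponent is created, and it is precisely the geometric sum appearing in the final formula.

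For the induction, the base case $n=1$ is trivial ($w_1=1$, $M_1=0$). For the step, assume $(ab)^{n-1}=w_{n-1}a^{n-1}b^{n-1}$ with the class of $w_{n-1}$ equal to $[b,a]^{M_{n-1}}$, where $M_{n-1}=\sum_{k=1}^{n-2}\alpha^k\sum_{\nu=0}^{k-1}\beta^\nu$. Then
\[
(ab)^n = w_{n-1}a^{n-1}b^{n-1}ab = w_{n-1}a^{n-1}[b^{n-1},a]\,a b^{n} = w_{n-1}\,[b^{n-1},a]^{a^{n-1}} a^{n}b^{n},
\]
where the middle equality uses the commutation rule and the last one pushes $a^{n-1}$ to the left of the commutator. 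Setting $w_n=w_{n-1}[b^{n-1},a]^{a^{n-1}}$ gives the exact group identity $(ab)^n=w_n a^n b^n$. Passing to the module, the class of $[b^{n-1},a]^{a^{n-1}}$ is $[b,a]^{\alpha^{n-1}\sum_{\nu=0}^{n-2}\beta^\nu}$ by the previous paragraph and the $\alpha^{n-1}$-action, so the class of $w_n$ is $[b,a]^{M_{n-1}+\alpha^{n-1}\sum_{\nu=0}^{n-2}\beta^\nu}=[b,a]^{M_n}$, which is exactly the claimed exponent $M_n=\sum_{k=1}^{n-1}\alpha^k\sum_{\nu=0}^{k-1}\beta^\nu$. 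I expect the only delicate points to be purely bookkeeping: fixing the conjugation convention consistently so that the geometric-series exponents telescope correctly, and being careful to perform all rewriting as honest identities in $F_2$ before abelianizing and replacing products of conjugates by sums in $R_{\mathrm{Fer}_n}/R_{\mathrm{Fer}_n}'$.
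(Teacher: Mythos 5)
Your proposal is correct and follows essentially the same route as the paper: the paper also proves the identity by induction on the exponent, using the rewriting $a^k b^k ab = [b^k,a]^{a^k}a^{k+1}b^{k+1}$ and then expanding $[b^k,a]$ via Lemma \ref{com-lem-ide}(\ref{com-ide1}) into the geometric sum $\sum_{\nu=0}^{k-1}\beta^\nu$. Your explicit separation of the exact identity in $F_2$ from its reading modulo $R_{\mathrm{Fer}_n}'$ is a presentational refinement, not a different argument.
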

\begin{proof}
We begin by computing the $n=2$ case:
\begin{align*}
(ab)^2=abab=ab 
\overbrace{
ab^{-1}a^{-1}
}^{[b,a]}
abb=a[b,a]a^{-1}a abb=[b,a]^\alpha a^2b^2.
\end{align*}
Similarly, for the $n=3$ case we have
\begin{align*}
(ab)^3 
&=
(ab)^2 (ab)=[b,a]^\alpha a^2 b^2 ab
=
[b,a]^\alpha
a^2 b
\overbrace{
b ab^{-1}a^{-1}}^{[b,a]}
ab b\\
&= 
[b,a]^\alpha \cdot [b,a]^{\alpha^2 \beta} a^2b ab^2=
[b,a]^{\alpha+\alpha^2 \beta } a^2  
\overbrace{ba b^{-1} a^{-1}}^{[b,a]}
ab b^2 
\\
&=[b,a]^{\alpha+\alpha^2 \beta}[b,a]^{\alpha^2} a^3b^3=[b,a]^{\alpha^2(\beta+1)+\alpha} a^3b^3.
\end{align*}
Now assume that for $k$ we have 
\[
(ab)^k=[b,a]^{
\alpha^{k-1}
\left(
\sum_{\nu=0}^{k-2} \beta^\nu 
\right)
+ \cdots +
\alpha^2(\beta+1)+\alpha
}
a^k b^k
\] 
Set $E(k)=\alpha^{k-1}
\left(
\sum_{\nu=0}^{k-2} \beta^\nu 
\right)
+ \cdots +
\alpha^2(\beta+1)+\alpha$.
We will now consider
\begin{align*}
(ab)^{k+1} &=
[b,a]^{
E(k)
}
a^k b^k
ab
\\
&=
[b,a]^{E(k)}
a^k [b^k,a] ab^k b= [b,a]^{E(k)} [b^k,a]^{\alpha^k} a^{k+1} b^{k+1}
\\
&=[b,a]^{E(k)}[b,a]^{
\alpha^{k}
\left(
\beta^{k-1}+\beta^{k-2}+\cdots+ \beta+1
\right)
}
a^{k+1} b^{k+1}
\\
&=
[b,a]^{
  \alpha^{k}
  \left(
  \sum_{\nu=0}^{k-1} \beta^\nu
\right)
  + 
\alpha^{k-1}
\left(
\sum_{\nu=0}^{k-2} \beta^\nu 
\right)
+ \cdots +
\alpha^2(\beta+1)+\alpha
}
a^{k+1} b^{k+1}, 
\end{align*}
as desired.
\end{proof}
The above lemma gives us 
\[
((ab)^n)^{\alpha^i} = [b,a]^{
\alpha^{n-1+i} \left(\sum_{\nu=0}^{n-2} \beta^\nu \right)
+ \cdots+
\alpha^{2+i}(\beta+1) +
\alpha^{1+i}
}a^n \left( b^n \right)^{ \alpha^i }.
\]

We can see that the trasformation matrix from elements $[b^j,a]^{\alpha^i}$ to elements of the form $[b,a]^{\beta^j\alpha^i}$ is invertible. 
This allows us to prove that the  elements in the sets $A_2$ and $A_3$ can be written as linear combinations of elements of the form $[b,a]^{\alpha^i\beta^j}$ and 
$(a^n)^{\beta^j}$ for $1\leq j \leq n-1$, $0\leq i \leq n-2$.  It is clear that the elements $(a^n)^{\beta^i}, (b^n)^{\alpha^i}, ((ab)^n)^{\alpha^i}$ as given in the table below are fixed by the cyclic group mentioned in the third 
 column. 
The elements $\gamma_i$ are the $n$-elements $(b^n)^{\alpha^i}$ fixed by $\beta$, the $n$-elements $(a^n)^{\beta^i}$
fixed by $\alpha$ and the $n$ invariant elements 
$((ab)^n)^{\alpha^i}$
in the module generated by commutators. In the following table we enumerate the fixed elements $\gamma_i$:

\begin{longtable}[c]{c  l  c}
\toprule
Invariant element $\gamma_i$ &  Index & Fixed by\\
\tabularnewline
\midrule
\endhead
$(a^n)^{\beta^i}$ &  $1\leq i \leq n$ & $ \langle \alpha \rangle$\\
$(b^n)^{\alpha^i}$ &  $n+1\leq i \leq 2n$ & $\langle \beta \rangle$\\
$((ab)^n)^{\alpha^i}$ & $2n+1\leq i \leq 3n$ & $\langle \alpha \beta\rangle$ \\
\bottomrule
\end{longtable}
\bigskip
So far we have computed the open Fermat curve 
 admitting a presentation
\[
R_{\mathrm{Fer}_n}=\langle  a_1,b_1,\ldots,a_g,b_g,\gamma_1,\ldots,\gamma_{3n} | \gamma_1 \gamma_2 \cdots \gamma_{3n} \cdot[a_1,b_1][a_2,b_2]\cdots [a_g,b_g]=1\rangle,
\]
where $g$ is the genus of the closed Fermat curve which equals to $(n-1)(n-2)/2$. Every ramification point of the Fermat curve is surrounded by a path $\gamma_i$ and there are $3n$ such paths. 
We can verify that our computation 
 is correct
so far, by computing the genus of the closed Fermat curve. 
% For the computation of the genus of the closed curve, i.e. 
We add  the  $3n$ missing points and  we observe that the rank of $R_{\mathrm{Fer}_n}$ equals $2g+3n-1$ so the Schreier index formula implies:
\[
2g+3n-1=n^2+1 \Rightarrow g=\frac{n^2+2-3n}{2}=\frac{(n-1)(n-2)}{2}.
\]
\begin{figure}
\includegraphics[scale=0.5]{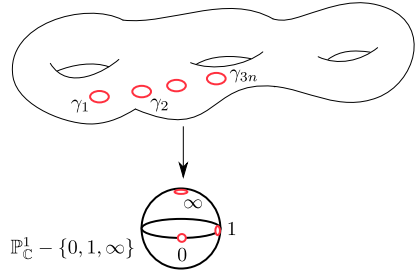}
\caption{Open Fermat curve as cover of the projective line}
\end{figure}

We have that
\[
H_1(X_F,\mathbb{Z})=\frac{R_{\mathrm{Fer}_n}/R_{\mathrm{Fer}_n}'}{\langle \gamma_1,\ldots,\gamma_{3n} \rangle}.
\]
% \newpage
\begin{definition}
We will denote by  $\zeta_n$ a fixed primitive $n$-th root of unity and by 
$\chi_{i,j}$  the character such that $\chi_{i,j}(\alpha^\nu\beta^\mu)=\zeta_n^{i\nu+j\mu}.$
\end{definition}

\begin{definition}
Let $\Gamma $ be  free $\mathbb{Z}$-module generated by $\langle \gamma_1,\ldots,\gamma_{3n}\rangle$.
\end{definition}

\begin{proposition}
\label{prop:15}
A basis for the $\mathbb{Z}$-module $H_1(X_F,\mathbb{Z})$ consists of the set:
\[
\{[b,a]^{\alpha^i \beta^j} \mod \Gamma: 0\leq i \leq n-2,0\leq j \leq n-3 \}.
\]
Let $\mathbb{F}$ be a field that contains $n$ different $n$-th roots of $1$.  Then 
\[
H_1(X_F,\Z) \otimes_\Z \mathbb{F}= 
\bigoplus_{
\substack{
i,j=1
\\
i+j \neq n}
}^{n-1} \mathbb{F} \chi_{i,j}.
\] 
\end{proposition}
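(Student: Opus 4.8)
The plan is to start from the identification $H_1(X_F,\Z)=(R_{\mathrm{Fer}_n}/R_{\mathrm{Fer}_n}')/\Gamma$ and to convert the three families of generators of $\Gamma$ into module-theoretic data. First I would record that $\Gamma$ is an $H_0$-submodule: the $2n$ elements $(a^n)^{\beta^i},(b^n)^{\alpha^i}$ span exactly the submodule $\Z[\langle\alpha\rangle]\oplus\Z[\langle\beta\rangle]$ of the short exact sequence of the previous lemma, while the $((ab)^n)^{\alpha^i}$ form a single $H_0$-orbit (here $\beta$ acts as $\alpha^{-1}$ because $(ab)^n$ is fixed by $\alpha\beta$). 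Quotienting first by the $(a^n)^{\beta^i},(b^n)^{\alpha^i}$ gives, by that lemma, the $\Z$-free module
\[
M:=\Z[H_0]/I=J_{\langle\alpha\rangle}\otimes J_{\langle\beta\rangle}
\]
with basis $e_{ij}:=[a,b]^{\alpha^i\beta^j}$, $0\le i,j\le n-2$. Feeding the formula for $((ab)^n)^{\alpha^i}$ into this quotient and discarding the now-trivial $a^n,b^n$ part turns the remaining $n$ generators into the relations $r_i:=\alpha^iE(n)\cdot[a,b]=0$ (action written additively), $0\le i\le n-1$, so that $H_1(X_F,\Z)=M/\langle r_0,\dots,r_{n-1}\rangle$. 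Since $(1+\alpha+\cdots+\alpha^{n-1})M=0$, the sum $\sum_i r_i$ vanishes identically in $M$; hence $r_0=-\sum_{\ell\ge 1}r_\ell$ and only $n-1$ of the relations are genuinely present. The same congruence $\beta E(n)\equiv\alpha^{-1}E(n)\pmod I$ shows that the $\Z$-span of the $r_i$ is already the full $\Z[H_0]$-ideal generated by $E(n)\cdot[a,b]$, so $H_1$ is an $H_0$-module equal to $M/(E(n))$.

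The integral basis then drops out of one structural remark. In $E(n)=\sum_{k=1}^{n-1}\alpha^k(1+\beta+\cdots+\beta^{k-1})$ the top $\beta$-power $\beta^{n-2}$ occurs through the single monomial $\alpha^{n-1}\beta^{n-2}$. Multiplying by $\alpha^\ell$ and reducing with $\alpha^n=1$ never changes the $\beta$-degree, and since no $\beta^{n-1}$ ever appears the $\beta$-relation is never triggered; hence the $\beta^{n-2}$-part of $r_\ell$ is $\alpha^{\ell-1}\beta^{n-2}[a,b]$, a single basis monomial for $\ell=1,\dots,n-1$. Thus $r_\ell$ reads
\[
e_{\ell-1,\,n-2}+(\text{a }\Z\text{-combination of }e_{ij}\text{ with }j\le n-3)=0 .
\]
These $n-1$ relations express the whole top column $\{e_{i,n-2}\}$ triangularly, with coefficient exactly $1$ on the eliminated generator, and $r_0$ is then automatically satisfied. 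Consequently $H_1(X_F,\Z)$ is the free $\Z$-module on $\{e_{ij}:0\le i\le n-2,\ 0\le j\le n-3\}$; after the harmless substitution $[a,b]=-[b,a]$ this is precisely the asserted basis, and its cardinality $(n-1)(n-2)=2g$ provides a sanity check.

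For the decomposition over $\mathbb{F}$ I would extend scalars, using that $n$ is invertible in $\mathbb{F}$ to write $\mathbb{F}[H_0]=\bigoplus_{i,j=0}^{n-1}\mathbb{F}\chi_{i,j}$ with $\alpha,\beta$ acting on the $\chi_{i,j}$-line by $\zeta_n^i,\zeta_n^j$. Because $\chi_{i,j}(1+\alpha+\cdots+\alpha^{n-1})=n\,\delta_{i,0}$ and likewise for $\beta$, one gets $M\otimes\mathbb{F}=\bigoplus_{i,j=1}^{n-1}\mathbb{F}\chi_{i,j}$, and $E(n)$ acts on $\chi_{i,j}$ by the scalar $E(n)(\zeta_n^i,\zeta_n^j)$, so the quotient by $(E(n))$ keeps $\chi_{i,j}$ exactly when this scalar is $0$. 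A geometric-series evaluation gives, for $i,j\neq 0$,
\[
E(n)(\zeta_n^i,\zeta_n^j)=\frac{1}{\zeta_n^{j}-1}\Big(\sum_{k=1}^{n-1}\zeta_n^{(i+j)k}-\sum_{k=1}^{n-1}\zeta_n^{ik}\Big),
\]
where the second sum is $-1$ while the first is $-1$ if $i+j\not\equiv 0$ and $n-1$ if $i+j\equiv 0\pmod n$; hence the scalar vanishes exactly when $i+j\ne n$ and equals $n/(\zeta_n^{j}-1)\ne 0$ when $i+j=n$. This yields $H_1(X_F,\Z)\otimes_\Z\mathbb{F}=\bigoplus_{i,j=1,\ i+j\ne n}^{n-1}\mathbb{F}\chi_{i,j}$, and the count $(n-1)^2-(n-1)$ again matches $2g$.

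The hard part, such as it is, will be the bookkeeping in the second paragraph: one must be certain that the relations $r_\ell$ pin down the top $\beta$-column \emph{unimodularly}, so that the quotient is genuinely free with no surviving torsion or hidden relation. Everything hinges on the single monomial $\alpha^{n-1}\beta^{n-2}$ carrying the top $\beta$-degree in $E(n)$; this one fact simultaneously drives the integral triangular elimination and, after extension of scalars, the vanishing of the eigenvalue off the antidiagonal $i+j=n$, so the two halves of the statement are really two readings of the same computation.
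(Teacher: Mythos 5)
Your proposal is correct and follows essentially the same route as the paper: both reduce to the module $\Z[H_0]/I$ generated by $[a,b]$, use the expansion of $((ab)^n)^{\alpha^i}$ whose unique top-degree monomial $\alpha^{n-1+i}\beta^{n-2}$ permits a unimodular elimination of the column $j=n-2$, and then identify the surviving characters as those off the antidiagonal $i+j=n$. The only difference is one of explicitness -- you spell out the triangular elimination and compute the eigenvalue of $E(n)$ on each $\chi_{i,j}$ directly, where the paper performs the change of basis and invokes the $\alpha\beta$-stabilizer argument in one line -- so your write-up is a more detailed rendering of the same proof.
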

\begin{proof}
The first assertion follows by considering the action modulo the elements which are invariant by an element of 
 $H_0$.
 Indeed, in  order to compute the quotient we change the basis of $R_{\mathrm{Fer}_n}/R_{\mathrm{Fer}_n}'$
by replacing each one of  the elements $[b,a]^{\alpha^{n-1+i}\beta^{n-2}}$ by $((ab)^n)^{\alpha^i}$ for all $0\leq i \leq n-1$, which belongs to the group $\langle \gamma_1,\ldots,\gamma_{3n} \rangle$ and is considered to be zero.

For the second assertion let us write 
\[
\left(
J_{\langle \alpha \rangle } \otimes \mathbb{F}
\right)
 \bigotimes 
\left(
J_{\langle \beta\rangle} \otimes \mathbb{F}
\right)
=
\left(
\bigoplus_{i=1}^{n-1} \mathbb{F} \chi_{i,0}
\right)
\bigotimes
\left( 
\bigoplus_{j=1}^{n-1} \mathbb{F} \chi_{0,j}
\right)
=\bigoplus_{i,j=1}^{n-1} \mathbb{F}\chi_{i,j}
\]
We are looking for the elements which are stabilized by $\alpha \beta$, that is $\chi_{i,j}(\alpha \beta)=\zeta^{i+j}=1$. This is the module $\oplus_{i=1}^{n-1} \mathbb{F} \chi_{i,n-i}$, which has $n$-elements.  The desired result follows. 
\end{proof}
Observe that the above computation agrees with $\dim_\mathbb{F} H_1(X_F,\Z) \otimes_\Z \mathbb{F}=(n-1)(n-2)$.
% \newpage

\subsubsection{Braid group action}
We will now consider the action of the Braid group $B_3$ on $H_1(X_F,\Z)$ of the closed Fermat surface.
By the faithful Artin representation we observe  that the braid group in three strings is generated by the elements
$\sigma_1 ,\sigma_2$, 
where
\[
\sigma_1(a) = a b a^{-1} \qquad  \sigma_2(a) = a 
\qquad
\sigma_1(b)  = a  \qquad  \sigma_2(b) =  a^{-1}b^{-1}.
\]
% \highlight[id=AK,comment=18/2/2019 $B_{s-1} \rightarrow B_s$]{Above we have used the extended action given in remark \ref{larger-action}.}
% \begin{alignat*}{4}
% \sigma_1(a) & = a b a^{-1} &\qquad && \sigma_2(a) &= a \\
% \sigma_1(b) & = a  &\qquad && \sigma_2(b) &=  a^{-1}b^{-1}
% \end{alignat*}
Notice that the above two automorphism in the abelianized free group with two generators acts like the matrices
\[
\bar{\sigma}_1=\begin{pmatrix}
0 & 1 \\ 1 & 0
\end{pmatrix}
\qquad
\bar{\sigma}_2=
\begin{pmatrix}
1 & -1 \\
0 & -1
\end{pmatrix},
\]
in $\mathrm{GL}(2,\mathbb{Z})$, reflecting the fact that $B_3/Z(B_3)\cong \mathrm{PSL}(2,\mathbb{Z})$.
Therefore, 
working in $R_{\mathrm{Fer}_n}/R_{\mathrm{Fer}_n}'$ we have
\begin{alignat*}{4}
\sigma_1[a,b]  &= [aba^{-1},a] &&= [b,a]^{\alpha}=-[a,b]^{\alpha}
\\
\sigma_2[a,b]  &= [a,a^{-1}b^{-1}] &&= [b^{-1},a^{-1}].
\end{alignat*}
and more generally
\[
\sigma_1([b,a]^{\alpha^i \beta^j}) =-[b,a]^{\alpha^{j+1}\beta^i} 
\qquad
\sigma_2([b,a]^{\alpha^i \beta^j}) =-[b^{-1},a^{-1}]^{\alpha^{i-j} \beta^{-j}
}
\]
Indeed, 
we compute
\begin{align*}
\sigma_1([b,a]^{\alpha^i \beta^j})
&=
\sigma_1
\left(
a^i b^j [b,a] b^{-j} a^{-i}
\right) =
ab^ia^{-1} a^j [a,b]^\alpha  a^{-j+1} b^{-i} a^{-1}
\\
&= -[b,a]^{\alpha^{j+1} \beta^i }
\end{align*}
and 
\begin{align*}
\sigma_2([b,a]^{\alpha^i \beta^j}) &=
\sigma_2
\left(
a^i b^j [b,a] b^{-j} a^{-i}
\right) =-
a^i (a^{-1} b^{-1})^j [b^{-1},a^{-1}] (a^{-1} b^{-1})^{-j} a^{-i}
\\
&=-[b^{-1},a^{-1}]^{\alpha^i (\alpha \beta)^{-j}}=
-[b^{-1},a^{-1}]^{\alpha^{i-j} \beta^{-j}}.
\end{align*}
In the above equations we have used that $H_0$ is abelian and its action on $R_{\mathrm{Fer}_n}/R_{\mathrm{Fer}_n}'$ is well defined. 
We also compute 
\begin{alignat*}{4}
\sigma_1( (b^n)^{\alpha^i} ) & =(a^n)^{\beta^i}  &&
\sigma_1( (a^n)^{\beta^j}) &&= (b^n)^{\alpha^{j+1}} \\
\sigma_2((b^n)^{\alpha^i}  )&= \left( (ba)^n \right)^{-\alpha^i} \qquad  &&
\sigma_2( (a^n)^{\beta^j}) &&= (a^n)^{(\beta \alpha)^{-j}}.
\end{alignat*}

% \begin{proposition}
% The group $H_1(X,\mathbb{Z})$ has the following $B_3$-module structure:
% \[
% {\color{red} aaa }
% \]
% \end{proposition}

% Automorphism group of the Fermat curve as ellements in the mapping class group. 

% {\color{red}
% Action of the algebra $\mathcal{A}=\mathbb{Z}_\ell[[u,v,w]]/\langle (u+1)(v+1)(w+1)-1\rangle\cong \mathbb{Z}_\ell[[u,v]]$.

% Comparison with the work of Ihara. Ihara wants to study the representation to $\mathcal{F'}/\mathfrak{F}''$ (Gassner?). He approaches it with Fermat curves, with fundamental groups $\mathfrak{F}' \mathfrak{R}_n$.
% (Is this the open or the closed curve?)
% }

%
\subsection{The Generalized Fermat Curve}
\label{sec: Generalized Fermat Curves}
% \todo{references and History, Lang, Rohrlich}

\subsubsection{Application of the Schreier lemma}
$\;$

Consider the open curve $X_s=\mathbb{P}^1 \backslash \{0,1,\infty,\lambda_1,\ldots,\lambda_{s-3}\}$  and let $x_0$ be a fixed base point in $X_s$. 

\noindent
\begin{minipage}{0.64\textwidth}
 For the  fundamental group we have
\[
\pi_1(X_s,x_o) \cong F_{s-1}=\langle x_1,\ldots, x_{s-1}\rangle.
\]
 Let $\tilde{X}_s$ denote the universal covering space and
$Y=\tilde{X}_s/F'_{s-1}$ be the cover of $X_s$ corresponding to the group $F_{s-1}'$, 
that is 
\[
\mathrm{Gal}(Y/X_s) \cong F_{s-1}/F_{s-1}'\cong H_1(X_s,\Z) \cong \Z^{s-1}.
\] 
Let $H_{k,s-1}\cong (\Z/k \Z)^{s-1}$ be the abelian group fitting in the short exact sequence
\begin{equation}
\label{multi-index}
0 \rightarrow I 
\rightarrow H_1(X_s,\Z) \rightarrow H_{k,s-1} \rightarrow 0. 
\end{equation}
% We have the picture on the right.
\end{minipage}
\begin{minipage}{0.35\textwidth}
\[
\xymatrix{
  \tilde{X}_s \ar@/_1pc/[ddd]^{F_{s-1}} \ar@/^0.8pc/[drr]^{F_{s-1}'} \ar[ddr]^{R_{k,s-1}} &  & \\
   & &  Y \ar@/^2pc/[ddll]^{H_1(X,\mathbb{Z})} \ar[dl]_{I} \\
    &  C_{k,s-1}  \ar[dl]_{H_{k,s-1}} & \\
   X_s &
}
\]
\end{minipage}
% \smallskip

If we denote, in additive notation, $H_1(X_s,\Z)=\bigoplus_{\nu=0}^{s-1} x_\nu \Z$, then $I=\bigoplus_{\nu=0}^{s-1} k  x_\nu \Z$.
 
\begin{remark}
The short exact sequence is in some sense a  generalization of the winding number  exact sequence given in definition 9 of \cite{MR4117575}.  
\end{remark}

We will now employ the Schreier lemma \cite[chap. 2 sec. 8]{bogoGrp}, \cite[sec. 2.3 th. 2.7]{MagKarSol} in order to compute the free subgroup $R_{k,s-1}\subset F_{s-1}$, 
 where $R_{k,s-1}$ is the subgroup of $F_{s-1}$ corresponding to the curve $C_{k,s-1}$ and is isomorphic to the fundamental group of $C_{k,s-1}$. 
We will introduce some new notation first: 
\begin{definition}
For any $\mathbf{i} := (i_1,\ldots, i_{s-1}) \in  \Z^{s-1}$, define
\[
\mathbf{x}^{\mathbf{i}}:= x_1^{i_1} x_2^{i_2} \cdots x_{s-1}^{i_{s-1}}.
\]
We also set  
\[
\mathbf{e}_1 = (1,0,\ldots,0), \mathbf{e}_2 = (0,1,\ldots,0),
\ldots,
\mathbf{e}_{s-1}=(0,\ldots,0,1).
\]
\end{definition}
% {\color{red}
% Recall that a (right) Schreier Transversal for a subgroup $R< F_{s-1}$
% is a set $T=\{t_1=1,\ldots,t_n\}$ of reduced words, such that each right coset of $R$ in $F_{s-1}$ contains a unique word of $T$ (called a representative of this class) and all initial segments of these words also lie in $T$. In particular $1$ lies in $T$ and represents the class $R$ and $Rt_i \neq R t_j$ for all $i\neq j$. For any $g\in F_{s-1}$ denote by $\overline{g}$ the unique element of $T$ with the property 
% $Rg=R\overline{g}$.
%  }
\begin{lemma}
A Schreier Transversal for  $R_{k,s-1}< F_{s-1}$ is given by 
\[
T=
\{\mathbf{x}^{\mathbf{i}}: \mathbf{i}=(i_1,\ldots,i_j,\ldots,i_{s-1}) \in \Z^{s-1} \text{ and } 0 \leq i_j \leq k-1\},
% \left\{
% x_1^{i_1}x_2^{i_2}\cdots x_{s-1}^{i_{s-1}}, 0\leq i_j \leq k-1, 1\leq j \leq s-1
% \right\}.
\]
\end{lemma}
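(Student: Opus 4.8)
The plan is to verify the two defining properties of a Schreier transversal directly, after first pinning down $R_{k,s-1}$ group-theoretically. First I would record that, by the tower of covers in the diagram above together with the short exact sequence~(\ref{multi-index}), the subgroup $R_{k,s-1}\subset F_{s-1}$ is exactly the kernel of the surjection
\[
\phi\colon F_{s-1}\twoheadrightarrow H_1(X_s,\Z)=\Z^{s-1}\twoheadrightarrow H_{k,s-1}\cong(\Z/k\Z)^{s-1}
\]
obtained by abelianizing and then reducing each coordinate modulo $k$; equivalently, $R_{k,s-1}$ is the preimage under abelianization of $I=(k\Z)^{s-1}$. Concretely $\phi(x_j)=\mathbf{e}_j \bmod k$, so on a positive word $\phi$ simply reads off the exponent vector modulo $k$. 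Since $\phi$ is a homomorphism with kernel $R_{k,s-1}$, two elements lie in the same right coset $R_{k,s-1}g$ if and only if they have the same image under $\phi$, so the right cosets $R_{k,s-1}\backslash F_{s-1}$ are in bijection with $(\Z/k\Z)^{s-1}$ via $\phi$, and in particular $[F_{s-1}:R_{k,s-1}]=k^{s-1}$.

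With this in hand the transversal properties become combinatorial. Each $\mathbf{x}^{\mathbf{i}}=x_1^{i_1}\cdots x_{s-1}^{i_{s-1}}$ with $0\le i_j\le k-1$ is a positive word, hence already reduced, and the set $T$ is prefix-closed: any initial segment is obtained by truncating the exponent of the last occurring letter, which again yields a word $\mathbf{x}^{\mathbf{i}'}$ with $\mathbf{i}'=(i_1,\dots,i_{m-1},j,0,\dots,0)$ and all entries in $[0,k-1]$, so $\mathbf{x}^{\mathbf{i}'}\in T$. For the unique-representative property I would compute $\phi(\mathbf{x}^{\mathbf{i}})=(i_1\bmod k,\dots,i_{s-1}\bmod k)=\mathbf{i}$, using that each $i_j$ already lies in $[0,k-1]$. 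Thus $\phi$ restricts to a map $T\to(\Z/k\Z)^{s-1}$ that is injective (distinct lattice points in the box $[0,k-1]^{s-1}$ have distinct residue tuples) and surjective (every residue tuple has a representative in the box). Since $|T|=k^{s-1}$ equals the index, each right coset of $R_{k,s-1}$ meets $T$ in exactly one element, which is precisely the first defining property.

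The only step carrying genuine content is the first one: the identification of $R_{k,s-1}$ with $\ker\phi$, that is, translating the description of $C_{k,s-1}$ as the regular $H_{k,s-1}$-cover of $X_s$ determined by~(\ref{multi-index}) into the assertion that $\pi_1(C_{k,s-1}^0)$ is exactly the preimage of $I$ under abelianization. Once that dictionary between the covering-space picture and the homomorphism $\phi$ is fixed, everything else — reducedness, prefix-closedness, and the bijection/counting argument — is routine, and I expect no further obstacle.
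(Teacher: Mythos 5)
Your proof is correct and follows essentially the same route as the paper: the paper's (much terser) argument likewise rests on the identification $H_{k,s-1}\cong F_{s-1}/R_{k,s-1}=H_1(X_s,\Z)/I$ — i.e.\ your $\ker\phi$ description — to see that the $k^{s-1}$ elements of $T$ are pairwise inequivalent modulo $R_{k,s-1}$, and dismisses prefix-closedness as immediate from the form of the words. You have simply written out in full the coset-counting and initial-segment checks that the paper leaves implicit.
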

\begin{proof}
Notice, that the set $T$ contains $\#H_{k,s-1}$ elements which are different modulo $R_{k,s-1}$. To see this we can use the fact that
\[
H_{k,s-1} \cong \frac{F_{s-1}}{R_{k,s-1}}=\frac{H_1(X_s,\Z)}{I},
\]
and the special form of $I$. The condition concerning the initial segments is trivially satisfied by the special form of the elements in $T$.
\end{proof}
For given $1 \leq \nu \leq s-1$ we have:
\[
\overline{
    \mathbf{x}^{\mathbf{i}} \cdot x_\nu
}
=
\begin{cases}
\mathbf{x}^{\mathbf{i}+ \mathbf{e}_\nu} & i_\nu < k-1 \\
\mathbf{x}^{\mathbf{i}-i_\nu \mathbf{e}_\nu} & i_\nu =k-1 
\end{cases}
\]

% \[
% \overline{
%   x_1^{i_1}x_2^{i_2}\cdots x_{s-1}^{i_{s-1}}\cdot x_\nu
% }=
% \begin{cases}
% x_1^{i_1}x_2^{i_2}\cdots x_\nu^{i_\nu+1}\cdots  x_{s-1}^{i_{s-1}} & \text{ if } i_\nu< k-1 \\
% x_1^{i_1}x_2^{i_2}\cdots x_{\nu-1}^{i_{\nu-1}}x_{\nu+1}^{i_{\nu+1}} \cdots x_{s-1}^{i_{s-1}} & \text{ if } i_\nu=k-1
% \end{cases}
% \]
% Denote by $\bar{x}^{\bar{i}}=x_1^{i_1}x_2^{i_2}\cdots x_{s-1}^{i_{s-1}}$. We now compute 

{\bf Case I} For $1\leq \nu \leq s-1$:

\[
\mathbf{x}^{\mathbf{i}} \cdot x_\nu \cdot
\left(
\overline{
\mathbf{x}^{\mathbf{i}} \cdot x_\nu
}
\right)^{-1}=
\begin{cases}
\mathbf{x}^{\mathbf{i}}\cdot x_\nu \cdot \mathbf{x}^{-\mathbf{i}-\mathbf{e}_\nu}
& \text{ if } i_\nu < k-1
\\
\mathbf{x}^{\mathbf{i}}\cdot x_\nu \cdot \mathbf{x}^{-\mathbf{i}+(k-1)\mathbf{e}_\nu} & \text{ if } i_\nu=k-1 
\end{cases}
\]
Notice that in the second case
\[
\mathbf{x}^{-\mathbf{i}+(k-1)\mathbf{e}_\nu}=
x_1^{-i_1}x_2^{-i_2}\cdots x_{\nu-1}^{-i_{\nu-1}}x_{\nu+1}^{-i_{\nu+1}} \cdots x_{s-1}^{-i_{s-1}}.
\]

% \[
% \bar{x}^{\bar{i}} \cdot x_\nu \cdot 
% \left(
% \overline{
%   \bar{x}^{\bar{i}} \cdot x_\nu
% }
% \right)^{-1}
% =
% \begin{cases}
% \bar{x}^{\bar{i}} \cdot x_\nu
% \cdot x_1^{-i_1}x_2^{-i_2}\cdots x_{\nu}^{-i_\nu-1} \cdots x_{s-1}^{-i_{s-1}}
% & \text{ if } i_\nu < k-1 
% \\
% \bar{x}^{\bar{i}} \cdot x_\nu \cdot 
% x_1^{-i_1}x_2^{-i_2}\cdots x_{\nu-1}^{-i_{\nu-1}}x_{\nu+1}^{-i_{\nu+1}} \cdots x_{s-1}^{-i_{s-1}}
% & \text{ if } i_\nu=k-1.
% \end{cases}
% \]

{\bf Case II} For  $\nu=s-1$:
\[
\mathbf{x}^i \cdot x_{s-1} \cdot 
\left( 
\overline{
\mathbf{x}^{\mathbf{i}} x_{s-1}
}
\right)^{-1} =
\begin{cases}
1 & i_{s-1} < k-1 
\\
\mathbf{x}^{\mathbf{i}} \cdot x_{s-1}^k \cdot (\mathbf{x}^{\mathbf{i}})^{-1} 
&
i_{s-1}=k-1
\end{cases}
\]
% \[
% \bar{x}^{\bar{i}} \cdot x_{s-1} \cdot 
% \left(
% \overline{
%   \bar{x}^{\bar{i}} \cdot x_{s-1}
% }
% \right)^{-1}
% =\]
% \[
% =\begin{cases}
% 1 & \text{ if } i_{s-1} < k-1
% \\
% \left(
% x_1^{i_1}x_2^{i_2}\cdots x_{s-2}^{i_{s-2}}
% \right)
%  \cdot x_{s-1}^k \cdot 
%  \left(
% x_1^{i_1}x_2^{i_2}\cdots x_{s-2}^{i_{s-2}}
% \right)^{-1}
% & \text{ if } i_{s-1}=k-1.
% \end{cases}
% \]
\[
x_{\ell_1,\ell_2}
^\mathbf{i}
=
\begin{cases}
x_{\ell_1}^{i_{\ell_1}} x_{\ell_1+1}^{i_{\ell_1+1}} 
\cdots x_{\ell_2}^{i_{\ell_2}}
& \text{ if } \ell_1 \leq \ell_2
\\
1
& \text{ if } \ell_1 > \ell_2
\end{cases}
\]
The generators of the free group $R_{k,s-1}$ are falling in the following categories: 
\begin{align}
\label{A-categories}
A_{s-1} &=
\left\{
(x_{s-1}^k)^{x_1^{i_1}\cdots x_{s-2}^{i_{s-2}}}
\right\}   
\\ \nonumber
B_\nu
&=
\left\{
x_{1,\nu-1}
^
{
  \mathbf{i}
}
\cdot x_\nu^{i_\nu} x_{\nu+1,s-1}
^
{
  \mathbf{i}
}
 \cdot x_\nu\cdot
 \left(
x_{\nu+1,s-1}
^
{
  \mathbf{i}
}
\right)
^{-1}\cdot x_\nu^{-i_\nu-1}\cdot 
\left(
x_{1,\nu-1}
^
{
  \mathbf{i}
}
\right)^{-1}
\right\}
\\\nonumber
&= \left\{
[x_{\nu+1,s-1}^{  \mathbf{i}
}
, x_\nu]^{x_{1,\nu-1}
^\mathbf{i}
\cdot x_\nu^{i_\nu}}
\right\}
\\ \nonumber
&= 
\left\{
[x_{\nu+1,s-1}
^\mathbf{i}
, x_\nu]^{x_{1,\nu}^{
\mathbf{i}
}}
\right\}
\quad 1\leq i_\nu \leq k-2
\\
\nonumber
B_\nu'
&=
\left\{
x_{1,\nu-1}^{  \mathbf{i}   }
% x_1^{i_1}\cdots x_{\nu-1}^{i_{\nu-1}}
\cdot
x_\nu^{k-1}
\cdot
x_{\nu+1,s-1}^ \mathbf{i} 
% x_{\nu+1}^{i_{\nu+1}} \cdots
% x_{s-1}^{i_{s-1}}
\cdot
x_\nu
\cdot
\left(
x_{\nu+1,s-1}^{  \mathbf{i}}
\right)^{-1}
\cdot
\left(
x_{1,\nu-1}^{  \mathbf{i}}
\right)^{-1}
\right\}
\\ \nonumber
&=
\left\{
\left(
x_\nu^k [x_\nu^{-1},x_{\nu+1,s-1}^{  \mathbf{i}}]
\right)^{x_{1,\nu-1}^{  \mathbf{i}}}
\right\}.
\end{align} 
Notice that $A_{s-1}$ corresponds to Case II, while the sets $B_\nu$, $B'_\nu$ for $1\leq \nu \leq s-2$ correspond to Case I, for $i_\nu<k-1$ and $i_\nu=k-1$ subcases respectively. 
We now count the sizes of the above sets. 
\begin{align*}
\# A_{s-1} &= k^{s-2} \\
\# B_\nu &= (k-1)\cdot k^{\nu-1} \cdot (k^{s-1-\nu}-1), \text{ for } 1\leq \nu \leq s-2 \\
\# B_\nu' &= k^{\nu-1}\cdot k^{s-1-\nu}=k^{s-2}, \text{ for } 1\leq \nu \leq s-2
\end{align*}
which gives in total 
\begin{equation}
\label{genuscount}
\# A_{s-1}+ \sum_{\nu=1}^{s-2} \#B_\nu + \sum_{\nu=1}^{s-2} \# B_\nu'= (s-2)\cdot k^{s-1}+1. 
\end{equation}
\subsubsection{Elements stabilized}
Let us not consider the action of 
 $H_{k,s-1}=(F_{s-1}/R_{k,s-1})$ on $(R_{k,s-1}/R'_{k,s-1} )$. Let us now denote 
\[
\frac{F_{s-1}}{R_{k,s-1}}=
\langle
\xi_1,\ldots, \xi_{s-1}
\rangle \cong \left( \frac{\Z}{k \Z} \right)^{s-1},
\]
where $\xi_j=x_j R_{k,s-1}$.
Let us write $\xi_{\ell_1,\ell_2}^{\mathbf{i}}= x_{\ell_1,\ell_2}^{\mathbf{i}}R_{k,s-1}$.
Observe first that the group generated by $\xi_{s-1}$ stabilizes
 $(x_{s-1}^k)^{\xi_{1,s-2}
^
{
  \mathbf{i}
}
 }$, since 
\begin{align*}
\left(
(
x_{s-1}^k
)^{\xi_{1,s-2}^{\mathbf{i}}}
\right)^{\xi_{s-1}} &=
(x_{s-1}^k)^{\xi_{s-1} \cdot \xi_{1,s-2}^{\mathbf{i}}} 
\\
&=
(x_{s-1} x_{s-1}^k x_{s-1}^{-1})^{\xi_{1,s-2}^{\mathbf{i}}}= (x_{s-1}^k)^{\xi_{1,s-2}^{\mathbf{i}}}. 
\end{align*}
  In this way we see that all $k^{s-2}$ elements of $A_{s-1}$ have non trivial stabilizer. 
Now we observe that 
\[
x_{1,\nu-1}^{  \mathbf{i}}
\cdot x_\nu^{k-1}\cdot 
x_{\nu+1,s-1}^{  \mathbf{i}}
\cdot x_\nu 
\cdot 
\left(
x_{\nu+1,s-1}^{  \mathbf{i}}
\right)^{-1}\cdot 
\left(
x_{1,\nu-1} ^{  \mathbf{i}}
\right)^{-1}=\]
\[=
[x_\nu^{k-1},x_{\nu+1,s-1}^{  \mathbf{i}}
]^{\xi_{1,\nu-1} ^{  \mathbf{i}}}
\cdot (x_\nu^k)^{\xi_{1,\nu-1}^{  \mathbf{i}}
 \cdot
\xi_{\nu+1,s-1}
^{  \mathbf{i}}
}.
\]
Observe that 
 for each $\nu$, $1\leq \nu \leq s-1$,  $\langle \xi_\nu\rangle$ stabilizes the $k^{s-2}$ elements of $B_\nu'$ of the form
  $(x_\nu^k)^{\xi_{1,\nu-1}^{  \mathbf{i}}
 \cdot
\xi_{\nu+1,s-1}^{  \mathbf{i}}}$ and the element $\langle \xi_1\cdots \xi_{s-1}\rangle$ stabilizes all elements  $((x_1\cdots x_{s-1})^k)^{\xi_{1,s-2}^{  \mathbf{i}}}$, 
 of which there are  $k^{s-2}$. 

\begin{longtable}[c]{c  l  l}
\toprule
Invariant element $\gamma_i$ &  Cardinal & Fixed by\\
\tabularnewline
\midrule
\endhead
$(x_{s-1}^k)^{\xi_{1,s-2}^{  \mathbf{i}}}$ &  $k^{s-2}$ & $ \langle \xi_{s-1} \rangle$
\\
$(x_\nu^k)^{\xi_{1,\nu-1}^{  \mathbf{i}} \cdot
\xi_{\nu+1,s-1}^{  \mathbf{i}}}$ 
&  $(s-2)k^{s-2}$ 
& $\langle \xi_\nu \rangle$, where  $1\leq  \nu \leq s-2$
 \\
$((x_1\cdots x_{s-1})^k)^{\xi_{1,s-2}^{  \mathbf{i}}}$ & $k^{s-2}$ & $\langle \xi_1\cdots \xi_{s-1}\rangle$ \\
\bottomrule
\end{longtable}
In total we have $s k^{s-2}$ fixed elements $\gamma_i$.

\begin{lemma}
\label{lem19}
The following equality holds. 
\begin{align*}
[x_{\nu+1,s-1}^{\mathbf{i}}, x_\nu]^{\xi_{1,\nu}^{\mathbf{i}}}
&= 
\left(
 [x_{\nu+1}^{i_{\nu+1}}, x_\nu]
+ 
[x_{\nu+2}^{i_{\nu+2}},x_\nu]^{{\xi_{\nu+1}}^{i_{\nu+1}}}+ 
\cdots
+[x_{s-1}^{i_{s-1}}, x_\nu]^{\xi_{\nu+1, s-2}^{\mathbf{i}} } 
\right)^{\xi_{1, \nu}^{\mathbf{i}}}.
\\
&= 
\left(
\sum_{j=\nu+1}^{s-1} [ x_j^{i_j},x_\nu]
^{\xi^{\mathbf{i}}_{\nu+1,j-1}    }
\right)^{\xi_{1, \nu}^{\mathbf{i}}}
\end{align*}
% \begin{multline*}
% [x_{\nu+1,s-1}, x_\nu]^{\xi_{1,\nu-1}\cdot \xi_\nu^{i_\nu}}
% = \\
% \left(
%  [x_{\nu+1}^{i_{\nu+1}}, x_\nu]^{{\xi_\nu}^{i_\nu}}
% + 
% [x_{\nu+2}^{i_{\nu+2}},x_\nu]^{{\xi_\nu}^{i_\nu} \cdot {\xi_{\nu+1}}^{i_{\nu+1}}}+ 
% \cdots
% +[x_{s-1}^{i_{s-1}}, x_\nu]^{\xi_{\nu, s-2}} 
% \right)^{\xi_{1, \nu-1}^{\mathbf{i}}}.
% \end{multline*}
\end{lemma}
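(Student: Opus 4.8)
The plan is to reduce the statement to the elementary expansion of a commutator whose first entry is a product, and then to push this expansion into the abelianization $R_{k,s-1}/R_{k,s-1}'$, where products become sums and conjugation becomes the module action of $H_{k,s-1}$.

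First I would record the purely group-theoretic identity, valid with the conventions $[a,b]=aba^{-1}b^{-1}$ and $x^y=yxy^{-1}$,
\[
[ab,c]=[b,c]^a\cdot[a,c],
\]
which follows from the direct computation $abcb^{-1}a^{-1}c^{-1}=a\bigl([b,c]c\bigr)a^{-1}c^{-1}=[b,c]^a\,[a,c]$; this is the product analogue of lemma \ref{com-lem-ide}. Writing $x_{\nu+1,s-1}^{\mathbf{i}}=u_1u_2\cdots u_t$ with $u_j=x_{\nu+j}^{i_{\nu+j}}$ and $t=s-1-\nu$, and repeatedly splitting off the leftmost factor, an immediate induction on $t$ gives
\[
[u_1\cdots u_t,\,x_\nu]=\prod_{j=1}^{t}[u_j,x_\nu]^{u_1\cdots u_{j-1}}.
\]
Re-indexing by $m=\nu+j$ and recognising $u_1\cdots u_{j-1}=x_{\nu+1,m-1}^{\mathbf{i}}$, this reads
\[
[x_{\nu+1,s-1}^{\mathbf{i}},x_\nu]=\prod_{m=\nu+1}^{s-1}[x_m^{i_m},x_\nu]^{x_{\nu+1,m-1}^{\mathbf{i}}}.
\]

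Next I would transport this identity into the homology module $R_{k,s-1}/R_{k,s-1}'$. Each commutator $[x_m^{i_m},x_\nu]$ and the full product $[x_{\nu+1,s-1}^{\mathbf{i}},x_\nu]$ lies in $F_{s-1}'\subseteq R_{k,s-1}$, the inclusion holding because $F_{s-1}/R_{k,s-1}\cong H_{k,s-1}$ is abelian. The conjugation action of $H_{k,s-1}$ on $R_{k,s-1}/R_{k,s-1}'$ is well defined and independent of the chosen coset representatives, exactly as recorded in the Fermat case; hence in the abelianization products turn into sums, the conjugating words $x_{\nu+1,m-1}^{\mathbf{i}}$ may be replaced by their classes $\xi_{\nu+1,m-1}^{\mathbf{i}}\in H_{k,s-1}$, and the displayed identity becomes
\[
[x_{\nu+1,s-1}^{\mathbf{i}},x_\nu]=\sum_{m=\nu+1}^{s-1}[x_m^{i_m},x_\nu]^{\xi_{\nu+1,m-1}^{\mathbf{i}}}.
\]
Finally I would apply the module automorphism $(\,\cdot\,)^{\xi_{1,\nu}^{\mathbf{i}}}$ to both sides; since this action is additive it distributes over the sum and produces precisely the two displayed equalities of the lemma, the first being the term-by-term expansion of the second.

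There is no genuine obstacle here; the work is entirely in the bookkeeping. The one point to watch is the telescoping range of the inner conjugators $\xi_{\nu+1,m-1}^{\mathbf{i}}$ — in particular the first summand $m=\nu+1$ carries the empty conjugator $\xi_{\nu+1,\nu}^{\mathbf{i}}=1$, matching the bare term $[x_{\nu+1}^{i_{\nu+1}},x_\nu]$ — together with the verification that every term truly lies in $R_{k,s-1}$, which is what legitimises both the passage to the quotient and the replacement of each $x$ by the corresponding $\xi$.
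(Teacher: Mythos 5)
Your proof is correct and follows essentially the same route as the paper: the same commutator identity $[xz,y]=[z,y]^x\cdot[x,y]$, the same induction on the number of factors in $x_{\nu+1,s-1}^{\mathbf{i}}$, and the same passage to additive notation in $R_{k,s-1}/R_{k,s-1}'$ before conjugating by $\xi_{1,\nu}^{\mathbf{i}}$. Your added remarks — that $F_{s-1}'\subseteq R_{k,s-1}$ legitimises working in the abelianization, and that the $m=\nu+1$ term carries the empty conjugator — are points the paper leaves implicit, but they do not change the argument.
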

\begin{proof}
The lemma will be proved by induction. Notice that it is enough to prove
\begin{align*}
[x_{\nu+1,s-1}^{\mathbf{i}}, x_\nu]
&= 
 [x_{\nu+1}^{i_{\nu+1}}, x_\nu]
+ 
[x_{\nu+2}^{i_{\nu+2}},x_\nu]^{{\xi_{\nu+1}}^{i_{\nu+1}}}+ 
\cdots
+[x_{s-1}^{i_{s-1}}, x_\nu]^{\xi_{\nu+1, s-2}^{\mathbf{i}} }
\\
&=
\sum_{j=\nu+1}^{s-1} [ x_j^{i_j},x_\nu]
^{\xi^{\mathbf{i}}_{\nu+1,j-1}    }.
\end{align*}
For $\nu+1=s-1$ the desired equality 
is trivial.
We will use the following commutator identity, which can be easily verified:
\[
[xz,y]=[z,y]^x \cdot [x,y].
\]
Assume that the equality holds  for the next product $x_{\nu,s-1}^{\mathbf{i}}$ we compute
\[
[x_{\nu,s-1}^{\mathbf{i}},x_\nu]=
[x_{\nu}^{i_\nu} \cdot x_{\nu+1,s-1}^{\mathbf{i}},x_\nu]=
[x_{\nu+1,s-1}^{\mathbf{i}},x_\nu]^{\xi_\nu^{i_\nu}} \cdot [x_\nu^{i_\nu},x_\nu]. 
\]
Hence writing the above equality  additively we obtain 
\begin{align*}
[x_{\nu,s-1}^{\mathbf{i}},x_\nu]
&=
[x_\nu^{i_\nu},x_\nu] + 
\left( 
[x_{\nu+1}^{i_{\nu+1}}, x_\nu]
+ 
[x_{\nu+2}^{i_{\nu+2}},x_\nu]^{\xi_{\nu+1}^{i_{\nu+1}}}+ 
\cdots
+[x_{s-1}^{i_{s-1}}, x_\nu]^{\xi_{\nu+1, s-2}^{\mathbf{i}} }
\right)^{\xi_\nu^{i_\nu}}
\\
&=
[x_\nu^{i_\nu},x_\nu]
+
[x_{\nu+1}^{i_{\nu+1}}, x_\nu]^{\xi_\nu^{i_\nu}}
+ 
[x_{\nu+2}^{i_{\nu+2}},x_\nu]^{\xi_\nu^{i_\nu}\xi_{\nu+1}^{i_{\nu+1}}}+ 
\cdots
+[x_{s-1}^{i_{s-1}}, x_\nu]^{\xi_{\nu, s-2}^{\mathbf{i}}  }
\end{align*}
\end{proof}

Similarly to the computation of the classical Fermat curves we change to a more suitable basis.
\begin{proposition}
\label{prop21}
Recall that $\mathbf{i}=(i_1,\ldots, i_{s-1})$,  $0 \leq i_1,\ldots,i_{s-1} \leq k-1$.
A generating set for $R_{k,s-1}/R_{k,s-1}'$ is given by 
% We can also take a new basis insteed of the old one. This is the following:
\begin{align*}
\tilde{A}_{s-1} &= \{
(x_{s-1}^k)^{\xi_{1, s-2}^\mathbf{i}   } \} 
\\
\tilde{A}_\nu &= \{ (x_\nu ^k)^{\xi_{1,\nu-1}^{\mathbf{i}} 
\cdot \xi_{\nu+1,s-1}^\mathbf{i}} \}, \text{ for } 1\leq \nu \leq s-2
\\
\tilde{A}_\nu' & =\{[x_j, x_\nu]^{\xi_{1,s-1}^{\mathbf{i}} 
% \cdot \xi_\nu^{i_\nu} 
% \cdot 
% \xi_{\nu+1,s-1}^\mathbf{i}
}\}, \text{ for } 0\leq j<\nu \leq s-1, 
1\leq i_j,i_\nu \leq k-2.
\end{align*}
\end{proposition}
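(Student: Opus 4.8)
The plan is to imitate, now in $s-1$ variables, the change of generators carried out for the classical Fermat curve in Lemma~\ref{homoRF} and in the $\Sigma_1,\Sigma_2$ computation of Section~\ref{sec:FermatCurves}. We already know that $R_{k,s-1}/R_{k,s-1}'$ is the free $\mathbb{Z}$-module on the Schreier generators $A_{s-1}\cup\bigcup_\nu B_\nu\cup\bigcup_\nu B_\nu'$ of eq.~(\ref{A-categories}); working additively, where the conjugation action of $H_{k,s-1}$ is well defined, it suffices to show that the submodule generated by $\{\tilde{A}_{s-1},\tilde{A}_\nu,\tilde{A}_\nu'\}$ contains each of these free generators (the reverse inclusion being obvious, as the $\tilde{A}$'s are genuine elements of the module). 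The set $A_{s-1}$ is literally $\tilde{A}_{s-1}$, so that case is immediate.

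First I would treat the commutator generators $B_\nu$. By Lemma~\ref{lem19} each element $[x_{\nu+1,s-1}^{\mathbf{i}},x_\nu]^{\xi_{1,\nu}^{\mathbf{i}}}$ expands as a telescoping sum $\sum_{m=\nu+1}^{s-1}[x_m^{i_m},x_\nu]^{\xi_{1,m-1}^{\mathbf{i}}}$, and Lemma~\ref{com-lem-ide}(\ref{com-ide1}) rewrites each factor $[x_m^{i_m},x_\nu]$ as $\sum_{p=0}^{i_m-1}[x_m,x_\nu]^{\xi_m^p}$, exactly as eq.~(\ref{eqforceAug}) did for $n=3$. After using the antisymmetry $[x_m,x_\nu]=-[x_\nu,x_m]$ to respect the convention $j<\nu$ of $\tilde{A}_\nu'$, this is a change of generators whose matrix is, block by block, the unipotent lower-triangular matrix already exhibited in the Fermat computation, hence invertible over $\mathbb{Z}$; so the $B_\nu$ and the basic commutators generate the same submodule. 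The partial conjugations produced (those in which the exponents of $\xi_{m+1},\ldots,\xi_{s-1}$ vanish, or in which a low exponent is $0$) are pushed into the prescribed interior range $1\le i_j,i_\nu\le k-2$ by the cyclic relations $1+\xi_\mu+\cdots+\xi_\mu^{k-1}=0$ on commutators modulo the $x_\mu^k$-part, exactly as in eq.~(\ref{beta-quot-zero}) and eq.~(\ref{eqforceAug1}); these relations are available precisely because $x_\mu^k\in R_{k,s-1}$.

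Next I would treat $B_\nu'$. The identity computed just before the table of stabilized elements writes each $B_\nu'$ additively as $[x_\nu^{k-1},x_{\nu+1,s-1}^{\mathbf{i}}]^{\xi_{1,\nu-1}^{\mathbf{i}}}+(x_\nu^k)^{\xi_{1,\nu-1}^{\mathbf{i}}\cdot\xi_{\nu+1,s-1}^{\mathbf{i}}}$. The second summand is precisely a generator of $\tilde{A}_\nu$; the first is a commutator, and expanding \emph{both} of its arguments with Lemma~\ref{com-lem-ide} places it inside $\langle\tilde{A}_\nu'\rangle$ by the argument of the previous paragraph. Hence $B_\nu'\in\langle\tilde{A}_\nu,\tilde{A}_\nu'\rangle$, and conversely $\tilde{A}_\nu$ is recovered from $B_\nu'$ modulo the commutator part. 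Combining the three cases shows that $\langle\tilde{A}_{s-1},\tilde{A}_\nu,\tilde{A}_\nu'\rangle$ is the whole module.

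The main obstacle is the bookkeeping in the commutator step: one must verify that the nested expansions of Lemmas~\ref{lem19} and~\ref{com-lem-ide} assemble into a single invertible integral transformation in all $s-1$ indices simultaneously, and that the boundary conjugations (vanishing high exponents, or exponents equal to $0$ or $k-1$) are genuinely absorbed by the cyclic relations into the stated interior ranges rather than leaking out of the span. A useful consistency check is to compare the resulting generator count with the total $(s-2)k^{s-1}+1$ of eq.~(\ref{genuscount}); agreement there confirms that the passage from eq.~(\ref{A-categories}) to the new generating set has been carried out without loss.
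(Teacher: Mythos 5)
Your proposal follows essentially the same route as the paper's own (very terse) proof: both reduce the Schreier generators of eq.~(\ref{A-categories}) to the new set by combining Lemma~\ref{lem19} with the commutator expansions of Lemma~\ref{com-lem-ide} (which the paper packages as eq.~(\ref{power-xk}), while you use the equivalent identity preceding the table of stabilized elements) to obtain an invertible unipotent integral change of generators. The paper gives no more detail than you do on absorbing the boundary exponents into the stated ranges, so your write-up is, if anything, the more complete of the two.
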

\begin{proof}
Notice that eq. (\ref{power-a}) for $i\neq \nu$ implies that 
\begin{align}
\label{power-xk}
\left(
x_i^k
\right)^{\xi_\nu^j} &=[x_\nu^j,x_i^{k-1}]+x_{i}^{k-1}x_\nu^j x_i x_\nu^{-j}
\\ \nonumber
&= [x_\nu^j,x_i]^{\xi_i^{k-2} + \xi_i^{k-3}+\cdots+ \xi_i+1} + x_{i}^{k}
[x_i^{-1},x_\nu^j].
\end{align}
The result follows using the generating set given in eq. (\ref{A-categories}), lemma \ref{lem19} and eq. (\ref{power-xk}).
% Use relations to show that these are generators. Then count that they have the correct cardinality. 
% {\color{red}

% We have $\#A_{s-1}=\#A_\nu=k^{s-2}$ so we totally have $(s-1)k^{s-2}$ generators. 

% The elements in this page are $\binom{s-1}{2}=\frac{(s-1)(s-2)}2$
% }
\end{proof}

% \begin{proposition}
% An alternative basis of the free group is generated by the elements:
% {\small
% \begin{alignat*}{3}
% A_{s-1} &=\{ (x_{s-1}^k)^{x_{1,s-2}}\} && \qquad k^{s-2} \text{ elements } \\
% A_\nu' &= \{ (x_\nu^k)^{x_{1,\nu-1}\cdot x_{\nu+1,s-2}}\} && \qquad k^{s-2} \text{ elements } 1\leq \nu \leq s-2 \\
% A_\nu &= \{[x_j,x_\nu]^{x_{1,\nu-1}\cdot x_\nu^{i_\nu} \cdot x_{\nu+1,s-1}}\}
% && \qquad k^{\nu-1}(k-1)(k^{s-1-\nu}-1) \text{ elements}, 1 \leq \nu \leq s-2
% \end{alignat*}
% }
% where 
% $x_{\ell_1,\ell_2}=x_{\ell_1}^{i_{\ell_1}} x_{\ell_1+1}^{i_{\ell_1+1}} 
% \cdots x_{\ell_2}^{i_{\ell_2}}$. 
% \end{proposition}
% \begin{proof}
% {\color{blue}
% Observe first that an element in $A_\nu$ in eq. (\ref{A-categories}) can be written as
% \[
% x_{1,\nu-1}\cdot x_\nu^{i_\nu} x_{\nu+1,s-1} \cdot x_\nu\cdot 
% x_{\nu+1,s-1}^{-1}\cdot x_\nu^{-i_\nu-1}\cdot x_{1,\nu-1}^{-1} 
% =\]
% \[=
% \left(
% [x_\nu^{i_\nu},x_{\nu+1,s-1}]\cdot 
% [
% x_{\nu+1,s-1},x_\nu^{i_\nu+1}
% ] 
% \right)^{x_{1,\nu-1}}
% =[x_{\nu+1,s-1},x_\nu]^{x_{1,\nu-1}\cdot x_\nu^{i_\nu}},
% \]
% i.e. it can be expressed as the action of a commutator. 
% {\bf Already wrote this!}
% }
% Now observe that 
% \[
% [x\cdot y, z]=[x,z]^y \cdot [y,z].
% \]
% The above equation together with the identities given in lemma \ref{com-lem-ide} allows us to write:
% \todo{complete the argument}
% \end{proof}
\begin{remark}
For the homology of the closed curve we have:
\[
H_1( C_{k,s-1} ,\mathbb{Z})=\frac{R_{k,s-1}/R_{k,s-1}'}{\langle \gamma_1,\ldots,\gamma_{s k^{s-1}} \rangle}.
\]
Using eq. (\ref{genero}) and the fact that 
$\mathrm{rank} \;H_1( C_{k,s-1} ,\mathbb{Z})=2g_{ C_{k,s-1} }$
it is easy to verify that 
\begin{equation} \label{secondgenusFormula}
(s-2)k^{s-1}+1 -(s\cdot k^{s-2}-1)=2g_{ C_{k,s-1} }.
\end{equation}
In the above formula we have subtracted one from the number of invariant elements $\gamma_i$ since $\gamma_1\cdots \gamma_{sk^{s-2}}=1$.
\end{remark}
% \highlight[id=AK,comment=12/3/2019]{
Describing the action in this case is not as straightforward as it was for the case of classical Fermat curve. We will use the theory of  Alexander modules instead and postpone this computation to section \ref{sec:Crowell}.

\section{On the representation of Ihara}
\label{sec:IharaRep}
\subsection{Pro-$\ell$ braid groups}
Let $\ell$ be a prime number and let $\mathfrak{F}_{s}$
denote the pro-$\ell$ free group with $s$ free generators. Let $S\subset \mathbb{P}^1_{\bar{\Q}}$ be a set consisted of $s$ points, $s\geq 3$,  on the projective line  and suppose that  $P\in \mathbb{Q}$ for all $P\in S-\{\infty\}$. In this way the absolute Galois group corresponds to ``pure braids''. Ihara in \cite{Ihara1985-it} introduced the monodromy representation
\[
\mathrm{Ih}_S: \mathrm{Gal}(\bar{\Q}/\Q) \rightarrow \mathrm{Aut}(\mathfrak{F}_{s-1}).
\]
Here the  group 
$\mathfrak{F}_{s-1}
 \cong
\pi_1^{\mathrm{pro}-\ell}(\mathbb{P}^1_{\bar{\Q}}-S)$ is the pro-$\ell$ \'etale fundamental group and   is known to admit  a presentation
\begin{equation}
\label{Frpres}
\mathfrak{F}_{s-1}=
\reallywidehat{
\left\langle
x_1,\ldots,x_s | x_1x_2\cdots x_s=1
\right\rangle
}, 
\end{equation}
where $\reallywidehat{\cdot}$ denotes the pro-$\ell$ completion of a finitely generated group. 
Given a set $\{x_i, i\in I\}$ in a topological group we will denote by $\langle x_i, i \in I \rangle$
the topological closure of the group generated by the group elements $x_i$, $i\in I$.
In \cite{Ihara1985-it} Ihara studied the case $S=\{0,1,\infty\}$. This is an interesting case since by Belyi's theorem \cite{Belyi1} the branched covers of $\mathbb{P}^1 \backslash \{0,1,\infty\}$ are exactly the curves defined over $\bar{\Q}$. 
The case  $s\geq 3$ is also interesting and was also considered by Ihara, see \cite{MR1159208}. 
Using a M\"obious  transformation we can assume that the set $S$ consists of the elements $0,1,\lambda_1,\cdots,\lambda_{s-3},\infty$.

The Ihara representation can be explained in terms of Galois theory as follows:
 Consider the maximal pro-$\ell$ extension $\mathcal{M}$ of $\Q(t)$ unramified outside the set $S$.
The Galois group $\mathrm{Gal}(\mathcal{M}/\bar{\Q}(t))$ is known to be 
 isomorphic to
the pro-$\ell$ free group $\mathfrak{F}_{s-1}$ of rank $s-1$. A selection of generators $x_1,\ldots,x_{s-1}$ corresponds to an isomorphism
 $i:\mathfrak{F}_{s-1} \rightarrow \mathrm{Gal}(\mathcal{M}/\bar{\Q}(t))$, such that $i(x_\nu)$ ($1 \leq \nu \leq s$) generates the inertia group of some place $\xi_\nu$ of $\mathcal{M}$ extending the place $P_i$ of $\bar{\Q}(t)$, corresponding to the $i$-th element of the set $S$.

We have the following exact equence:
\begin{equation} \label{ActionFree}
  \xymatrix@R=12pt{
    1 \ar[r] & \mathrm{Gal}(\mathcal{M}/\bar{\Q}(t))
    \ar[r] \ar[d]_{i}^{\cong} &
    \mathrm{Gal}(\mathcal{M}/\Q(t)) \ar[r] &
    \mathrm{Gal}(\bar{\Q}(t)/\Q(t)) \ar[r] \ar[d]^{\cong} & 1 \\
    & \mathfrak{F}_{s-1} &  & \mathrm{Gal}(\bar{\Q}/\Q) &
  }
\end{equation}
Every element $\rho \in \mathrm{Gal}(\bar{\Q}/\Q)$ gives rise to an element $\rho^* \in \mathrm{Gal}(\mathcal{M}/\Q(t))$, 
 which is unique up to an element of $\mathrm{Gal}(\mathcal{M}/\bar{\Q}(t))$, and so we obtain
an 
isomorphism
 $x \mapsto \rho^* x \rho^{-1} \in \tilde{P}(\mathfrak{F}_{s-1})/\mathrm{Int}(\mathfrak{F}_{s-1})$, 
where 
\[
\tilde{P}(\mathfrak{F}_{s-1}):=
\left\{
\phi\in \mathrm{Aut}(\mathfrak{F}_{s-1})| \phi(x_i)\sim x_i^{N(\phi)}
(1\leq i \leq s) \text{ for some } N(\phi) \in \Z_\ell^*
\right\},
\]
and $\sim$ denotes the conjugation equivalence. 

Y. Ihara \cite[p.52]{Ihara1985-it}, proved that the action of $\sigma \in \mathrm{Gal}(\bar{\Q}/\Q)$ on the topological generators of $\mathfrak{F}_{s-1}$ is 
 in $\tilde{P}(\mathfrak{F}_{s-1})$ that is 
\[
\sigma(x_i)=w_i(\sigma) x_i^{N(\sigma)} w_i(\sigma)^{-1},
\]
where $N(\sigma)\in \Z_\ell^{*}$ and $w_i(\sigma) \in \mathfrak{F}_{s-1}$ is the element defining the conjugation.  
In this way the outer Galois representation 
\[
\Phi_S: \mathrm{Gal}(\bar{\Q}/\Q) \rightarrow 
\tilde{P}(\mathfrak{F}_{s-1})/\mathrm{Int}(\mathfrak{F}_{s-1})
\]
is defined.

 % for some  $a \in \mathbb{Z}_\ell$.
By selecting the representatives of elements $\tilde{P}(\mathfrak{F}_{s-1})$ we can define the Ihara representation 
\[
\mathrm{Ih}_S:\mathrm{Gal}(\bar{\Q}/\Q) \rightarrow P(\mathfrak{F}_{s-1})\subset \mathrm{Aut}(\mathfrak{F}_{s-1}),
\]
where 
\begin{equation}
P(\mathfrak{F}_{s-1})=
\left\{
\phi \in \mathrm{Aut}(\mathfrak{F}_{s-1})
\left|
\begin{array}{l}
\phi(x_i)\sim x_i^{N(\phi)}, (1\leq i \leq s-2), \phi(x_{s-1}) \approx x_{s-1}^{N(\phi)}
\\
\phi(x_s)=x_s^{N(\phi)}, \text{ for some }  N(\phi)\in \Z_\ell^{\times}
\end{array}
\right.
\right\},
\end{equation}
where $\approx$ denotes conjugacy by an element of the subgroup of 
$\mathfrak{F}_{s}$ generated by the commutator $\mathfrak{F}_s'$ and 
$x_1,\ldots,x_{s-3}$. 
%\highlight[id=AK,comment=write also the thorem on $N_1$]{
The composition $N\circ\mathrm{Ih}_S$ equals the cyclotomic character $\chi_\ell:\mathrm{Gal}(\bar{\Q}/\Q)\rightarrow \Z_\ell^*$.
For more details on these constructions see \cite[prop.3 p.55]{Ihara1985-it}, \cite[prop. 2.2.2]{MorishitaATIT}.
\subsection{Magnus embedding}
We will explain now 
the Magnus
embedding following \cite{MorishitaATIT}. This embedding is given by the map 
\begin{equation}
\label{MagnusEmbedd1}
\mathfrak{F}_{s-1} \rightarrow \mathbb{Z}_\ell[[u_1, u_2,
  \ldots, u_{s-1}]]_{\mathrm{nc}}
\end{equation}
of $\mathfrak{F}_{s-1}$
into the
``non-commutative'' formal power series algebra
$(x_i 
{\color{red}
\mapsto
}
 1+u_i$ for $1 \leq i \leq s-1)$. 
Let $\mathfrak{H}$ denote the abelianization of $\mathfrak{F}_{s-1}$, and $H$ the abelianization of $F_{s-1}$
\[
H:=\mathrm{gr}_1(F_{s-1})=H_1(F_{s-1},\mathbb{Z}) \qquad
\mathfrak{H}=:\mathrm{gr}_1(\mathfrak{F}_{s-1})=H_1(\mathfrak{F}_{s-1},\mathbb{Z}_\ell)=H\otimes_{\Z}\Z_{\ell}.
\]
The term $\mathrm{gr}_1$ above has its origin on the graded Lie algebra corresponding to a (pro-$\ell$) free group, see \cite[p. 58]{Ihara1985-it} and \cite{LazardGradedLie}.
Following \cite{MorishitaATIT}, \cite{Morishita2011-yw} we consider the tensor algebras 
\[
T(H)=\bigoplus_{n\geq 0} H^{\otimes n},
\qquad
T(\mathfrak{H})=\bigoplus_{n\geq 0} \mathfrak{H}^{\otimes n},
\]
where $\mathfrak{H}^0=\mathbb{Z}_\ell$ and $\mathfrak{H}^{\otimes n}:=\mathfrak{H} \otimes_{\mathbb{Z}_\ell} \cdots \otimes_{\mathbb{Z}_\ell} \mathfrak{H}$ ($n$-times)
(resp. $H^0=\Z$, $H^{\otimes n}=H\otimes_{\Z} \cdots \otimes_{\Z} H$) ).
If $u_0,\ldots,u_{s-1}$ is a $\mathbb{Z}_\ell$ basis of the free $\mathbb{Z}_\ell$-module $\mathfrak{H}$, then 
\[
T(\mathfrak{H})=\mathbb{Z}_\ell \langle u_1,\ldots,u_{s-1} \rangle,
\]
is the non-commutative polynomial algebra 
$\mathbb{Z}_\ell[[u_1, u_2,
  \ldots, u_{s-1}]]_{\mathrm{nc}}
  $
over $\mathbb{Z}_\ell$,  
 appearing in the  right hand side of eq. (\ref{MagnusEmbedd1}).

We will denote by $\widehat{T}(\mathfrak{H})$ the completion of $T(\mathfrak{H})$ with respect to the $\mathfrak{m}$-adic topology, where $\mathfrak{m}$ is the two sided ideal generated by $u_1,\ldots,u_{s-1}$ and $\ell$. This algebra is the algebra of non-commutative formal power series over $\mathbb{Z}_\ell$ with variables $u_1,\ldots,u_{s-1}$:
\[
\widehat{T}(\mathfrak{H})=\prod_{n\geq 0} 
\mathfrak{H}^{\otimes n}= \mathbb{Z}_\ell\langle\!\langle u_1,\ldots,u_{s-1} \rangle \! \rangle.
\]
Let $\mathbb{Z}_\ell[[\mathfrak{F}_{s-1}]]$ be the complete group algebra of $\mathfrak{F}_{s-1}$ over $\mathbb{Z}_\ell$, and let 
\[
\varepsilon_{\mathbb{Z}_\ell[[\mathfrak{F}_{s-1}]]}:\mathbb{Z}_\ell[[\mathfrak{F}_{s-1}]]  \rightarrow 
\mathbb{Z}_\ell
\]
be the augmentation homomorphism. Denote by $I_{\mathbb{Z}_\ell[[\mathfrak{F}_{s-1}]]}:=\ker \varepsilon_{\mathbb{Z}_\ell[[\mathfrak{F}_
{ s-1 }
]]}$ the augmentation ideal.
The correspodence $x_i \mapsto 1+u_i$ for $1 \leq i \leq s-1$ induces an isomorphism of topological $\mathbb{Z}_\ell$-algebras, the {\em pro-$\ell$ Magnus isomorphism}. 
\[
\Theta: \mathbb{Z}_\ell[[\mathfrak{F}_{s-1}]]\stackrel{\cong}{\longrightarrow} \widehat{T}(\mathfrak{H}).
\]
\begin{example}
The map $\Theta$ sends $\Z_\ell[\Z]=\Z_\ell[t,t^{-1}]$ to $\Z_\ell[[u]]$ by mapping $\Theta(t)=1+u$ and 
$\Theta(t^{-1})=(1+u)^{-1}=\sum_{i=0}^\infty (-1)^i u^i$.  The image $\Theta(\Z_\ell[t,t^{-1}])$ is not onto
 $
 % \Z_\ell[[1+u]]=
 \hat{T}(\mathfrak{H})$, but $\Z_\ell[[\Z_\ell]]=\Z_\ell[[\mathfrak{F}_1]]$ is mapped isomorphically to $\hat{T}(\mathfrak{H})$ by $\Theta$.
\end{example}

For an multiindex $I=(i_1,\ldots,i_{s-1})$ we set 
$u_I=u_{i_1}\cdots u_{i_{s-1}}$. The coefficient of $u_{I}$ in 
$\Theta(\alpha)$ is called the Magnus coefficient of $\alpha$ and it is denoted by $\mu(I,\alpha)$, that is
\[
\Theta(\alpha)=\varepsilon_{\Z_\ell[[\mathfrak{F}_{s-1}]]}(\alpha)
+
\sum_{|I| \geq 1} \mu(I,\alpha)u_I.
\]
For certain properties of the Magnus embedding and a fascinating application to $\ell$-adic Milnor invariants we refer to 
\cite[chap. 8]{Morishita2011-yw}, \cite[sec. 3.2]{MorishitaATIT}.
\subsection{Milnor invariants}
Consider the group $\mathfrak{H}:=\mathfrak{F}_{s-1}^{\mathrm{ab}}=\mathfrak{F}_{s-1}/[\mathfrak{F}_{s-1},\mathfrak{F}_{s-1}]$. For $f\in  \mathfrak{F}_{s-1}$ denote by $[f]$ its image in $\mathfrak{H}$. We will write $\mathfrak{H}$ as an additive $\mathbb{Z}_\ell$-module, which is  generated by $[u_1],\ldots,[u_{s-1}]$. Notice that the following relation holds:
\[
[u_1]+\cdots + [u_{s-1}]+[u_s]=0.
\]
Every automorphism $\phi\in \mathrm{Aut}(\mathfrak{F}_{s-1})$ gives rise to
a linear automorphism of the free $\mathbb{Z}_\ell$-module $\mathfrak{H}$ and
we will denote it by $[\phi]\in \mathrm{GL}(\mathfrak{H})$.
\begin{lemma}
The elements $w_i(\sigma)\in \mathfrak{F}_{s-1}$ can be selected uniquely so that 
\begin{enumerate}
\item $\mathrm{Ih}_S(\sigma)(x_i)=w_i(\sigma) x_i^{\chi_\ell(\sigma)} w_i(\sigma)^{-1}$, where $\chi_\ell$ is the $\ell$-cyclotomic character.
\item In the expression $[w_i(\sigma)]=c_1^{(i)} 
[u_1]+\cdots+ c_{s-1}^{(i)} [u_{s-1}], 
 c_j^{(i)}\in \Z_\ell$, we have $c_i^{(i)}=0$.
\end{enumerate}
\end{lemma}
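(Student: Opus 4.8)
The plan is to first produce one element $w$ realizing the conjugation~(1), then to identify the precise freedom in its choice via a centralizer computation, and finally to use condition~(2) to single out a canonical representative.

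For existence I would argue as follows. Since $\mathrm{Ih}_S(\sigma)\in P(\mathfrak{F}_{s-1})$, the defining property of $P(\mathfrak{F}_{s-1})$ gives $\mathrm{Ih}_S(\sigma)(x_i)\sim x_i^{N(\mathrm{Ih}_S(\sigma))}$, and the identity $N\circ \mathrm{Ih}_S=\chi_\ell$ forces the exponent to be $\chi_\ell(\sigma)$. Hence there is at least one $w\in\mathfrak{F}_{s-1}$ with $\mathrm{Ih}_S(\sigma)(x_i)=w\,x_i^{\chi_\ell(\sigma)}\,w^{-1}$, which is exactly~(1).

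Next I would classify the ambiguity. If $w$ and $w'$ both satisfy~(1), then $w^{-1}w'$ commutes with $x_i^{\chi_\ell(\sigma)}$. Because $\chi_\ell(\sigma)\in\Z_\ell^{\times}$ is a unit, one may write $x_i=\bigl(x_i^{\chi_\ell(\sigma)}\bigr)^{\chi_\ell(\sigma)^{-1}}$ with $\chi_\ell(\sigma)^{-1}\in\Z_\ell$, so the centralizer of $x_i^{\chi_\ell(\sigma)}$ coincides with the centralizer $C$ of $x_i$ in $\mathfrak{F}_{s-1}$. The structural input I would invoke is that in a free pro-$\ell$ group the centralizer of a nontrivial element is procyclic; for the free generator $x_i$ it is $C=\overline{\langle x_i\rangle}\cong\Z_\ell$. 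Thus any two admissible choices differ by right multiplication by $x_i^{t}$ for a unique $t\in\Z_\ell$, that is $w'=w\,x_i^{t}$.

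Finally, for the normalization I would pass to the abelianization $\mathfrak{H}$, where $[x_i]=[u_i]$. The replacement $w\mapsto w\,x_i^{t}$ sends $[w]\mapsto [w]+t[u_i]$ and hence changes only the $i$-th coordinate, $c_i^{(i)}\mapsto c_i^{(i)}+t$. Choosing $t=-c_i^{(i)}$ yields the unique admissible $w_i(\sigma)$ with $c_i^{(i)}=0$, which establishes both existence and uniqueness in~(2). The main obstacle is the centralizer computation: one must know that the centralizer of the generator $x_i$ in $\mathfrak{F}_{s-1}$ is exactly the procyclic group $\overline{\langle x_i\rangle}$, together with the remark that raising to the unit power $\chi_\ell(\sigma)$ does not enlarge it; once this is in place the normalization is a one-line linear adjustment inside $\mathfrak{H}$.
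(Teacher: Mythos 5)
Your argument is correct, but note that the paper does not actually prove this lemma: it simply cites \cite[lemma 3.2.1]{MorishitaATIT}, so you are supplying the argument that the authors delegate to the literature. What you write is essentially the standard proof one finds there: existence of some conjugator comes from the definition of $P(\mathfrak{F}_{s-1})$ together with $N\circ\mathrm{Ih}_S=\chi_\ell$; the ambiguity is measured by the centralizer of $x_i^{\chi_\ell(\sigma)}$; and the normalization $c_i^{(i)}=0$ kills exactly that ambiguity because right multiplication by $x_i^{t}$ shifts only the $i$-th coordinate of $[w]$ in $\mathfrak{H}$. You correctly identify the one nontrivial input, namely that $C_{\mathfrak{F}_{s-1}}(x_i)=\overline{\langle x_i\rangle}\cong\Z_\ell$. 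If you want this self-contained, the clean justification is: any closed subgroup of a free pro-$\ell$ group is free pro-$\ell$ (pro-$\ell$ Nielsen--Schreier), an abelian free pro-$\ell$ group is procyclic, and free pro-$\ell$ groups are commutative-transitive, so the centralizer of a nontrivial element is procyclic; finally, since $x_i$ is a basis element its image in $\mathfrak{F}_{s-1}^{\mathrm{ab}}\cong\Z_\ell^{s-1}$ is not divisible by $\ell$, so $x_i$ cannot be a proper power of a generator of that procyclic centralizer, forcing $C(x_i)=\overline{\langle x_i\rangle}$. Your observation that raising to the unit power $\chi_\ell(\sigma)$ does not change the centralizer (because $x_i=(x_i^{\chi_\ell(\sigma)})^{\chi_\ell(\sigma)^{-1}}$ inside the procyclic subgroup) is also needed and correctly stated. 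In short: the proposal is a valid, essentially complete proof of a statement the paper only cites, and the only place where a reader might demand more is the centralizer fact, which you have flagged and which is standard.
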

\begin{proof}
See \cite[lemma 3.2.1]{MorishitaATIT}.
\end{proof}
For a multiindex $I=(i_1,\ldots,i_n)$, $1\leq i_1,\ldots,i_n \leq s-1$ the $\ell$-adic Milnor number for $\sigma\in \mathrm{Gal}(\bar{\Q}/\Q)$ is defined as the $\ell$-adic Magnus coefficient of $w_i(\sigma)$, for $I'=(i_1,\ldots,i_{n-1})$, that is
\[
\mu(\sigma,I):=\mu(I',w_{i_n}(\sigma)),
\]
see \cite[eq. 3.2.2]{MorishitaATIT}.
It is clear that the selection of $w_i(\sigma)$ describes completely the action of $\mathrm{Gal}(\bar{\Q}/\Q)$ on $\mathfrak{F}_{s-1}$.

\subsubsection{The commutative Magnus ring}
In this article we will consider actions of $\mathrm{Aut}(F_{s-1})$ or $\mathrm{Aut}(\mathfrak{F}_{s-1})$ on certain $\Z$-modules ($\Z_\ell$-modules) $M$ defined as quotients of subgroups of the (pro-$\ell$) free group. For example on  $F_{s-1}^{\mathrm{ab}}$ or on $\mathfrak{F}_{s-1}^{\mathrm{ab}}$. We would like for $M$ to be  an abelian group (we also choose to write $M$ additively) and we will entirely focus on the case $M=R/R'$, where $R< \mathfrak{F}_{s-1}$ (or $R < F_{s-1}$).

% \added[id=AK,comment=24/2/2019]{
The group $F_{s-1}$ (resp. $\mathfrak{F}_{s-1}$)
acts on itself by conjugation. This action can be translated as an $T(H)$ (resp. $\hat{T}(\mathfrak{H})$)
module structure  on $M$, by setting
\[
\alpha w \alpha^{-1}= \Theta(\alpha) \cdot w, 
\]
for $w\in F_{s-1}$ (resp. $w \in \mathfrak{F}_{s-1}$).
% }
% \deleted[id=AK]{
% By considering the action of $u_i\in T(H)$ on $F_{s-1}$ (resp, the action of $u_i\in \hat{T}(\mathfrak{H})$ on $\mathfrak{F}_{s-1}$) by conjugation by the element $x_i=1+u_i$ we can see that the conjugation action of the free group is equivalent to a $T(H)$-module (resp. $\hat{T}(\mathfrak{H})$-module) structure on $M$. 
% We would like to have the following property:}

\begin{lemma} \label{comm-action}
If $M=R/R'$ and 
$[R,\mathfrak{F}_{s-1}'] \subset R'$ (resp. $[R,F_{s-1}']\subset R'$)
then the induced conjugation action on $M$ satisfies  
\begin{equation}
\label{ab-action}
ab \cdot m=ba \cdot  m, \text{ for all } a,b \in \hat{T}(\mathfrak{H}) \text{(resp. $T(H))$  and } m \in M. 
\end{equation}
Notice that the inclusion $\mathfrak{F}_{s-1}' \subset R$ (resp. $F'_{s-1} \subset R$) implies the desired condition for the action to commute. 
\end{lemma}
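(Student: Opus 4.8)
The plan is to push everything through the Magnus isomorphism and reduce the assertion to a single commutator computation with group elements. Since $\Theta\colon \mathbb{Z}_\ell[[\mathfrak{F}_{s-1}]] \xrightarrow{\ \cong\ } \widehat{T}(\mathfrak{H})$ is an isomorphism of (non-commutative) topological rings, and the $\widehat{T}(\mathfrak{H})$-module structure on $M$ is by definition the transport along $\Theta$ of the conjugation action of $\mathbb{Z}_\ell[[\mathfrak{F}_{s-1}]]$, writing $a=\Theta(\alpha)$ and $b=\Theta(\beta)$ converts the desired \eqref{ab-action} into $\alpha\beta\cdot m=\beta\alpha\cdot m$ for $\alpha,\beta\in\mathbb{Z}_\ell[[\mathfrak{F}_{s-1}]]$; the discrete case is identical after noting that the generators $u_i$ of $T(H)$ correspond to $x_i-1$. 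As the action is $\mathbb{Z}_\ell$-bilinear and continuous, it is enough to verify the identity on a topological generating set, hence to prove $gh\cdot m=hg\cdot m$ for all group elements $g,h$ and all $m\in M$.

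First I would choose a representative $n\in R$ of $m$; here normality of $R$ in $\mathfrak{F}_{s-1}$ is used, so that all conjugates of $n$ stay in $R$ and the conjugation action on $M=R/R'$ is well defined. The computational heart is the identity
\[
\big((gh)\,n\,(gh)^{-1}\big)\cdot\big((hg)\,n\,(hg)^{-1}\big)^{-1}=[\,c,\,p\,],
\qquad c=[g,h],\quad p=(hg)\,n\,(hg)^{-1},
\]
which I would obtain from the elementary relation $gh=c\cdot hg$ (since $c\cdot hg=ghg^{-1}h^{-1}hg=gh$) by substitution and cancellation. The left-hand side is precisely the element of $R$ whose class in the abelian group $M$ represents the difference $gh\cdot m-hg\cdot m$.

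It then remains to place $[c,p]$ inside $R'$. By construction $c=[g,h]\in\mathfrak{F}_{s-1}'$, and by normality of $R$ we have $p=(hg)\,n\,(hg)^{-1}\in R$, so $[c,p]\in[\mathfrak{F}_{s-1}',R]=[R,\mathfrak{F}_{s-1}']\subseteq R'$ by hypothesis. Hence $gh\cdot m$ and $hg\cdot m$ agree modulo $R'$, proving $gh\cdot m=hg\cdot m$ in $M$; extending bilinearly and continuously and transporting back through $\Theta$ yields \eqref{ab-action} for all $a,b$. For the closing remark, if $\mathfrak{F}_{s-1}'\subseteq R$ then both entries of $[R,\mathfrak{F}_{s-1}']$ lie in $R$, whence $[R,\mathfrak{F}_{s-1}']\subseteq[R,R]=R'$ and the hypothesis holds automatically.

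I expect the only real obstacle to be bookkeeping rather than conceptual depth: getting the order of $g,h$ and the placement of the conjugating elements exactly right in the commutator identity, and justifying the reduction from the completed ring $\widehat{T}(\mathfrak{H})$ to group-element generators through continuity of the action in the pro-$\ell$ topology. All the substance sits in the hypothesis $[R,\mathfrak{F}_{s-1}']\subseteq R'$, which is exactly what forces commutators of group elements to act trivially on the abelianization $R/R'$.
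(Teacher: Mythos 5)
Your proof is correct and follows essentially the same route as the paper: both arguments reduce the claim to the single group-theoretic identity that $gh\,n\,(gh)^{-1}$ and $hg\,n\,(hg)^{-1}$ differ by a commutator of an element of $\mathfrak{F}_{s-1}'$ with an element of $R$, which the hypothesis $[R,\mathfrak{F}_{s-1}']\subseteq R'$ kills in $R/R'$. The paper conjugates $r$ by $[a^{-1},b^{-1}]$ on the inside rather than extracting $[c,p]$ on the outside, but this is only bookkeeping; your added remarks on reducing from $\widehat{T}(\mathfrak{H})$ to group generators and on the final inclusion $[R,R]=R'$ are correct and merely make explicit what the paper leaves implicit.
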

\begin{proof}
For $a,b\in \mathfrak{F}_{s-1}$ and $r\in R$ we compute 
\[
ab r b^{-1} a^{-1}= ba [a^{-1},b^{-1}] r [a^{-1},b^{-1}]^{-1} a^{-1} b^{-1}.
\]
So a sufficient condition for eq. (\ref{ab-action}) to hold is  
$[R,\mathfrak{F}_{s-1}'] \subset R'$ (resp. $[R,F_{s-1}']\subset R'$). This condition is satisfied if $\mathfrak{F}_{s-1}' \subset R$ (resp. $F_{s-1} \subset R$) then eq. (\ref{ab-action}) holds.
\end{proof}
Therefore, if the assumption of lemma \ref{comm-action} holds, instead of considering the action of the non-commutative ring $\hat{T}(\mathfrak{H})$ (resp. $T(H)$) it makes sense to consider the action of the corresponding abelianized ring. 
\begin{definition} \label{Acomplete-def}
Consider the commutative $\mathbb{Z}_\ell$-algebra of formal power series
\begin{eqnarray}
\nonumber
\mathcal{A} & =& \mathbb{Z}_\ell[[u_i: 1\leq i \leq s]]/
\left\langle
(1+u_1)(1+u_2)\cdots(1+u_s)-1
\right\rangle \\
& \cong &
\mathbb{Z}_\ell[[u_i: 1\leq i \leq s-1]]. \label{A-def}
\end{eqnarray}
The algebra $\mathcal{A}$ is the symmetric algebra of $\mathfrak{H}$ over $\Z_\ell$, and there is a natural quotient  map 
$\hat{T}(\mathfrak{H}) \rightarrow \mathrm{Sym}(\mathfrak{H})=\mathcal{A}$.
\end{definition}
\begin{remark}
As we noticed already the action of $\sigma\in \mathrm{Gal}(\bar{\Q}/
\Q)$ can be described in terms of the cocycles $w_1(\sigma),\ldots,w_{s-1}(\sigma)$. But then we can find elements 
\[\varpi_1(\sigma)=\Theta(w_1(\sigma)),\ldots,\varpi_{s-1}(\sigma)=\Theta(w_{s-1}(\sigma))\in \mathcal{A}\] such that 
\begin{equation}
\label{nomodule}
\sigma(x_i) = \varpi_i(\sigma)\cdot x_i^{\chi_\ell(\sigma)}.
\end{equation}
Therefore, in order to understand the action of 
$\mathrm{Gal}(\bar{\Q}/\Q)$ on $M=\mathfrak{F}_{s-1}/\mathfrak{F}'_{s-1}$ 
% (keep in mind that $M$ is a quotient of  $\mathfrak{F}_{s-1}$) 
it makes sense to consider the $\mathcal{A}$-module structure of $M$.
\end{remark}

% \newpage
%
%
\section{Alexander modules}
\label{sec:Alexander}
\subsection{Definition and Crowell exact sequence}
\label{sec:Crowell}
We will use the notation of section \ref{sec:GeometInterpret} for the groups $\bar{R}_0, R=\bar{R}_0/\Gamma \cap \bar{R}_0 \cong \bar{R}_0\cdot \Gamma/\Gamma,\Gamma$. 
% Recall that we considered a Galois cover $\pi: Y_0$ is a topological cover of $X_S=\mathbb{P}^1_{\mathbb{C}} \backslash \{S\}$, where $S\subset \mathbb{P}^1_{\bar{\Q}}$ is a finite set of points of the projective line. By covering space theory, the fundamental group $R_0$ of $Y_0$ is a subgroup of $\pi_1(X_S)\cong \mathfrak{F}_{s-1}$. Recall that we also defined the group $\Gamma=\langle x_1^e_1,\ldots, x_s^{e_s} \rangle$, where $e_1,\ldots,e_s$ are the ramification indices of the ramification points of $\pi: \bar{Y} \rightarrow \mathbb{P}^1$. 
Consider the short exact sequence in eq. (\ref{short-def}).
The group $G=\mathfrak{F}_{s-1}/\Gamma$ admits the presentation:
\begin{equation}
\label{Ggroup-def}
G=
\reallywidehat{
\left\langle
x_1,\ldots,x_{s} |x_1^{e_1}=\cdots =x_{s}^{e_s}=x_1\cdots x_s=1 
\right\rangle.
}
\end{equation}
On the other hand since we  assumed that $\mathfrak{F}_{s-1}' \subset \bar{R}_0$, 
(see lemma \ref{comm-action})  
the group $\mathfrak{F}_{s-1}/\bar{R}_0\cdot \Gamma$ is isomorphic to a quotient of the  abelian group $\mathbb{Z}/e_1\mathbb{Z} \times\cdots \times \Z/e_{s-1}\Z$.

Recall that $\psi:\mathcal{F}_{s-1}/\Gamma \rightarrow \mathcal{F}_{s-1}/\bar{R}_0 \cdot \Gamma$. 
Set 
\[
% {\color{blue} \mathcal{A}:= }
\mathcal{A}^{\bar{R}_0,\Gamma}=\Z_\ell[[\mathcal{F}_{s-1}/\bar{R}_0\cdot \Gamma]],
\]
and define the map $\varepsilon_{\mathcal{A}^{\bar{R}_0,\Gamma} }:\Z_\ell[[\mathcal{F}_{s-1}/\bar{R}_0\cdot \Gamma]]\rightarrow \Z_\ell$ to be the augmentation map corresponding functorially  to the map $\mathcal{F}_{s-1}/\bar{R}_0\cdot \Gamma \rightarrow \{1_{\mathcal{F}_{s-1}/\bar{R}_0\cdot \Gamma}\}$, see \cite[8.3 p.99]{Morishita2011-yw}.

Consider also  $\mathcal{A}_\psi^{\bar{R}_0,\Gamma}$  to be the  {\em  Alexander module}, a free $\Z_\ell$-module
\[
\mathcal{A}_\psi^{\bar{R}_0,\Gamma}=
\left(
\bigoplus_{g\in \mathfrak{F}_{s-1}/\Gamma} 
\mathcal{A}^{\bar{R}_0,\Gamma} dg
\right)/
\big\langle 
d(g_1g_2)-dg_1-\psi(g_1) dg_2: g_1,g_2\in \mathfrak{F}_{s-1} /\Gamma
\big\rangle_{\mathcal{A}^{\bar{R}_0,\Gamma}}, 
\]
where the denominator in the above quotient denotes the $\mathcal{A}^{\bar{R}_0,\Gamma}$-module generated by the relations inside $\langle \ldots\rangle_{\mathcal{A}^{\bar{R}_0,\Gamma}}$.

Define also the  map $\theta_1: R^{\mathrm{ab}} \rightarrow \mathcal{A}_\psi^{\bar{R}_0,\Gamma}$ given by 
\begin{equation}
\label{theta1map}
R^{\mathrm{ab}} \ni n \mapsto dn
\end{equation}
and the map $\theta_2: \mathcal{A}^{\bar{R}_0,\Gamma}_{\psi}
\rightarrow
\mathcal{A}^{\bar{R}_0,\Gamma}$
to be  the homomorphism induced by 
\[
dg\mapsto \psi(g)-1 \text{ for } g\in G. 
\]

We will use the Crowell Exact sequence 
% {\color{red} explain the notation}
%\cite[chap. 9]{Morishita2011-yw} 
\cite[sec. 9.2, sec. 9.4]{Morishita2011-yw},
\begin{equation} \label{CrowellEx}
0 \rightarrow 
R^{\mathrm{ab}}
 =R/R'
\stackrel{\theta_1}{\longrightarrow} 
\mathcal{A}_\psi^{\bar{R}_0,\Gamma}
\stackrel{\theta_2}{\longrightarrow}
\mathcal{A}^{\bar{R}_0,\Gamma}
\stackrel{\varepsilon_{\mathcal{A}^{\bar{R}_0,\Gamma}  }}{\longrightarrow} 
\mathbb{Z}_\ell
\rightarrow 
0.
\end{equation}
% where 
% \[
% \mathcal{A}^{\bar{R}_0,\Gamma}=\Z_\ell[[\mathcal{F}_{s-1}/R_0\cdot \Gamma]],
% \]
% and  $\mathcal{A}_\psi^{\bar{R}_0,\Gamma}$  is the {\em  Alexander module}, a free $\Z_\ell$-module
% \[
% \mathcal{A}_\psi^{\bar{R}_0,\Gamma}=
% \left(
% \bigoplus_{g\in \mathfrak{F}_{s-1}/\Gamma} 
% \mathcal{A}^{\bar{R}_0,\Gamma} dg
% \right)/
% \big\langle 
% d(g_1g_2)-dg_1-\psi(g_1) dg_2: g_1,g_2\in \mathfrak{F}_{s-1} /\Gamma
% \big\rangle_{\mathcal{A}^{\bar{R}_0,\Gamma}}.
% \]
% {\color{red} The map $\varepsilon_{\mathcal{A}}$
% .......
% }
% The map $\theta_1: R^{\mathrm{ab}} \rightarrow \mathcal{A}_\psi^{\bar{R}_0,\Gamma}$
% is given by 
% \begin{equation}
% \label{theta1map}
% R^{\mathrm{ab}} \ni n \mapsto dn
% \end{equation}
% {\color{red}
% and the map $\theta_2$
% is the homomorphism induced by 
% \[
% dg\mapsto \psi(g)-1 \text{ for } g\in G. 
% \]
% }
% \highlight[id=AK,comment=13/3/2019]{Write more about the two representations the group algebra. Fourier transform?}
For a description of the Alexander module in terms of differentials in  non-commutative algebras we refer to \cite{ParamPartC19}. Notice that when the group  $\mathfrak{F}_{s-1}/\bar{R}_0\cdot \Gamma$ is finite then we will write $\Z_\ell[\mathfrak{F}_{s-1}/\bar{R}_0\cdot \Gamma]$ instead of  $\Z_\ell[[\mathfrak{F}_{s-1}/\bar{R}_0\cdot \Gamma]]$.
In this case $\varepsilon_{\mathcal{A}^{\bar{R}_0,\Gamma}  }$ is the augmentation map sending finite sums 
$\sum_{g\in \mathcal{F}_{s-1}/\bar{R}_0\cdot \Gamma} a_g g$ to $\sum_{g\in \mathcal{F}_{s-1}/\bar{R}_0\cdot \Gamma} a_g \in \Z_\ell$.

\begin{proposition}
\label{freeApsi}
The module $\mathcal{A}^{\bar{R}_0,\Gamma}_\psi$ admits the following free resolution as an 
$\mathcal{A}^{\bar{R}_0,\Gamma}$-module:
\begin{equation}
\label{Qfree-res}
\left( \mathcal{A}^{\bar{R}_0,\Gamma} \right)^{s+1}
\stackrel{Q}{\longrightarrow} 
\left( \mathcal{A}^{\bar{R}_0,\Gamma} \right)^{s}
\longrightarrow 
\mathcal{A}^{\bar{R}_0,\Gamma}_{\psi}
\longrightarrow 0,
\end{equation}
where $s$ is the number of generators of $G$, given in eq. 
(\ref{Ggroup-def}) and $s+1$ is the number of relations.
Let $\beta_1,\ldots,\beta_{s+1} \in \mathcal{A}^{\bar{R}_0,\Gamma}$.
 The map $Q$ is 
expressed in form of Fox derivatives \cite[sec. 3.1]{BirmanBraids},\cite[chap. 8]{Morishita2011-yw},  as follows 
\[
\begin{pmatrix}
\beta_1 \\
\vdots \\
\beta_{s+1}
\end{pmatrix}
\mapsto
\begin{pmatrix}
\psi \pi \left(\frac{\partial x_1^{e_1}}{\partial x_1}\right) & 
\psi \pi \left(\frac{\partial x_2^{e_2}}{\partial x_1}\right) &
\cdots
\psi \pi \left(\frac{\partial x_s^{e_s}}{\partial x_1}\right) &
\psi \pi \left(\frac{\partial x_1\cdots x_s}{\partial x_1}\right) \\
\psi \pi \left(\frac{\partial x_1^{e_1}}{\partial x_2}\right) & 
\psi \pi \left(\frac{\partial x_2^{e_2}}{\partial x_2}\right) &
\cdots
\psi \pi \left(\frac{\partial x_s^{e_s}}{\partial x_2}\right) &
\psi \pi \left(\frac{\partial x_1\cdots x_s}{\partial x_2}\right) \\
\vdots & \vdots &  & \vdots  \\
\psi \pi \left(\frac{\partial x_1^{e_1}}{\partial x_s}\right) & 
\psi \pi \left(\frac{\partial x_2^{e_2}}{\partial x_s}\right) &
\cdots
\psi \pi \left(\frac{\partial x_s^{e_s}}{\partial x_s}\right) &
\psi \pi \left(\frac{\partial x_1\cdots x_s}{\partial x_s}\right) \\
\end{pmatrix}
\begin{pmatrix}
\beta_1 \\
\vdots \\
\beta_{s+1}
\end{pmatrix},
\]
where $\pi$ is the natural epimorphism $\mathfrak{F}_s \rightarrow G$ defined by the presentation given in eq. (\ref{Ggroup-def}). 
\end{proposition}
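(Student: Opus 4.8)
The plan is to derive the resolution directly from the presentation of $G$ in eq.~(\ref{Ggroup-def}) by means of Fox's free differential calculus, following the treatment of Alexander modules in \cite[chap.~8--9]{Morishita2011-yw}. Write $F=\mathfrak{F}_s$ for the free pro-$\ell$ group on $x_1,\ldots,x_s$, let $\pi\colon F\to G$ be the epimorphism of eq.~(\ref{Ggroup-def}), and set $r_1=x_1^{e_1},\ldots,r_s=x_s^{e_s}$, $r_{s+1}=x_1\cdots x_s$, so that $\ker\pi$ is the closed normal subgroup generated by $r_1,\ldots,r_{s+1}$. Composing $\pi$ with $\psi$ yields a map $\psi\pi\colon F\to \mathfrak{F}_{s-1}/\bar{R}_0\cdot\Gamma$, and one forms the free analogue $\mathcal{A}^F_\psi$ of the Alexander module, defined exactly as $\mathcal{A}_\psi^{\bar{R}_0,\Gamma}$ but with $\mathfrak{F}_{s-1}/\Gamma$ replaced by $F$ and coefficients taken through $\psi\pi$.

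First I would record that, since $F$ is free, $\mathcal{A}^F_\psi$ is a \emph{free} $\mathcal{A}^{\bar{R}_0,\Gamma}$-module of rank $s$ on the symbols $dx_1,\ldots,dx_s$. Indeed, the defining relation $d(fg)=df+\psi\pi(f)\,dg$ together with $d1=0$ and $d(f^{-1})=-\psi\pi(f^{-1})\,df$ lets one rewrite any $df$ uniquely as an $\mathcal{A}^{\bar{R}_0,\Gamma}$-combination of the $dx_i$; concretely this is the fundamental formula $dw=\sum_{i=1}^{s}\psi\pi\!\left(\frac{\partial w}{\partial x_i}\right)dx_i$ of Fox calculus, where $\partial/\partial x_i$ denotes the Fox derivative. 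This identifies the surjection $(\mathcal{A}^{\bar{R}_0,\Gamma})^{s}\twoheadrightarrow\mathcal{A}^F_\psi$, $\epsilon_i\mapsto dx_i$, as an isomorphism and furnishes the middle term of the resolution.

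Next I would identify $\mathcal{A}_\psi^{\bar{R}_0,\Gamma}$ with the quotient of $\mathcal{A}^F_\psi$ by the $\mathcal{A}^{\bar{R}_0,\Gamma}$-submodule $N^\flat$ generated by $dr_1,\ldots,dr_{s+1}$. The inclusion $N^\flat\subseteq\ker(\mathcal{A}^F_\psi\to\mathcal{A}_\psi^{\bar{R}_0,\Gamma})$ is a direct computation: for $r\in\ker\pi$ one has $\psi\pi(r)=1$, whence $d(wrw^{-1})=dw+\psi\pi(w)\,dr+\psi\pi(w)\,d(w^{-1})=\psi\pi(w)\,dr$, using $dw+\psi\pi(w)\,d(w^{-1})=d(ww^{-1})=0$; since $d$ is additive on $\ker\pi$ (again because $\psi\pi$ kills it), every differential coming from a relator lands in $N^\flat$. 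The reverse inclusion — that $dr_1,\ldots,dr_{s+1}$ generate the \emph{whole} kernel — is the crux, and follows from the Crowell exact sequence~(\ref{CrowellEx}) for $G$, which exhibits $\mathcal{A}_\psi^{\bar{R}_0,\Gamma}$ as the differential module attached to the presentation, so that $\mathcal{A}_\psi^{\bar{R}_0,\Gamma}\cong\mathcal{A}^F_\psi/N^\flat$. Using $dr_j=\sum_i\psi\pi(\partial r_j/\partial x_i)\,dx_i$, the generators of $N^\flat$ are precisely the columns of the displayed matrix, so $Q\colon(\mathcal{A}^{\bar{R}_0,\Gamma})^{s+1}\to(\mathcal{A}^{\bar{R}_0,\Gamma})^{s}$ is the map $\beta_j\mapsto dr_j$, and its cokernel is $\mathcal{A}_\psi^{\bar{R}_0,\Gamma}$, as claimed.

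The main obstacle I expect is twofold. First, one must ensure the free differential calculus and the fundamental Fox formula remain valid in the completed pro-$\ell$ setting, i.e.\ over $\mathbb{Z}_\ell[[\cdot]]$ rather than an ordinary group ring; this rests on the topological freeness of $\mathcal{A}^F_\psi$ and the continuity of $d$, which is exactly what \cite[sec.~9.2, 9.4]{Morishita2011-yw} supplies. Second, and more delicate, is the reverse inclusion above: one must check that no relation among the $dx_i$ beyond the relator differentials survives, equivalently that $r_1,\ldots,r_{s+1}$ generate $\ker\pi$ as a closed normal subgroup and that the Crowell sequence is exact in our profinite situation. Finally, I would verify consistency by evaluating the Fox derivatives explicitly, namely $\partial x_i^{e_i}/\partial x_i=1+x_i+\cdots+x_i^{e_i-1}$, $\partial x_i^{e_i}/\partial x_j=0$ for $j\neq i$, and $\partial(x_1\cdots x_s)/\partial x_j=x_1\cdots x_{j-1}$, which reproduces the entries of $Q$ after applying $\psi\pi$.
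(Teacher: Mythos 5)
Your argument is correct and is essentially the paper's own: the paper proves this proposition simply by citing \cite[cor.~9.6]{Morishita2011-yw}, and what you have written out --- freeness of the differential module of the free pro-$\ell$ group on the $dx_i$ via the fundamental Fox formula, the computation $d(wrw^{-1})=\psi\pi(w)\,dr$ for relators, and the identification of the kernel of $\mathcal{A}^F_\psi\twoheadrightarrow\mathcal{A}_\psi^{\bar{R}_0,\Gamma}$ with the submodule generated by the $dr_j$ --- is precisely the standard proof of that corollary, transported to the completed setting exactly as \cite[sec.~9.2, 9.4]{Morishita2011-yw} supplies. The explicit Fox derivatives you record at the end agree with eq.~(\ref{derive-prod}) of the paper, so the matrix of $Q$ comes out as displayed.
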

\begin{proof}
See  \cite[cor. 9.6]{Morishita2011-yw}. 
\end{proof}
If in eq. (\ref{CrowellEx}) $\bar{R}_0=\mathfrak{F}'_{s-1}$ and $\Gamma=\{1\}$, then
$
\mathcal{A}^{\mathfrak{F}'_{s-1},\{1\}}=\mathbb{Z}_\ell [[u_1,\ldots,u_{s-1}]]=\mathcal{A},
$
as defined in eq. (\ref{A-def}). 

To summarize, for $H_0=\mathfrak{F}_{s-1}/\bar{R}_0\cdot \Gamma$, the Alexander module $\mathcal{A}^{R,\Gamma}_\psi$  can be computed as a cokernel of the function $Q$:
\begin{equation} \label{Alexander-module1}
\mathcal{A}_\psi^{\bar{R}_0,\Gamma}=
\mathrm{coker} \;{Q},
 \qquad
\left(
\mathcal{A}^{\bar{R}_0,\Gamma}
\right)^{s+1}=
 \Z_\ell[[H_0]]^{s+1} \stackrel{Q}{\longrightarrow}
 \Z_\ell[[H_0]]^{s}=
 \left(\mathcal{A}^{\bar{R}_0,\Gamma}\right)^{s}.
 % \stackrel{d_1}{\longrightarrow}
 % \Z_\ell[H_k].
\end{equation}
The exponents in the above formula reflect the fact that the group $G$ is generated by
$(s+1)$ relations over
 $s$ free 
variables. 

\begin{proposition}
\label{Blanchfield-Lyndon}
If $\Gamma=\{1\}$ in eq. (\ref{short-def}) the Crowell exact sequence gives the Blanchfield-Lyndon exact sequence:
\begin{equation}
\label{BlanchfieldL}
\xymatrix{
0 \ar[r] & R^{\mathrm{ab}}
\ar[r]
& 
\left(
\mathcal{A}^{\bar{R}_0,\{1\}}
\right)^{s-1}
\ar[r]^-{d_1} &
\mathcal{A}^{\bar{R}_0,\{1\}}
\ar[r]^-{\varepsilon
%_{\mathcal{A}^{\bar{R}_0,\{1\}}}
}
&
\mathbb{Z}_\ell
\ar[r] & 
0.
}
\end{equation}
\end{proposition}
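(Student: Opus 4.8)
The plan is to exploit the fact that setting $\Gamma=\{1\}$ collapses the group $G=\mathfrak{F}_{s-1}/\Gamma$ appearing in eq.~(\ref{Ggroup-def}) down to the free pro-$\ell$ group $\mathfrak{F}_{s-1}$ itself: the defining relations $x_i^{e_i}=1$ disappear, and only the single relation $x_1\cdots x_s=1$ of the presentation (\ref{Frpres}) survives. First I would rerun the free-resolution construction of Proposition~\ref{freeApsi} (that is, \cite[cor.\ 9.6]{Morishita2011-yw}) for this one-relator presentation with $s$ generators. Since there is now a single relator, the map $Q$ of eq.~(\ref{Qfree-res}) becomes a map
\[
\mathcal{A}^{\bar{R}_0,\{1\}}\stackrel{Q}{\longrightarrow}\left(\mathcal{A}^{\bar{R}_0,\{1\}}\right)^{s},
\]
given by the single column of Fox derivatives $\psi\pi\!\left(\partial(x_1\cdots x_s)/\partial x_j\right)$, $1\leq j\leq s$.

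The key computational step is to evaluate this column. Using the Leibniz rule for Fox derivatives one finds $\partial(x_1\cdots x_s)/\partial x_j=x_1\cdots x_{j-1}$, so that $Q$ sends the generator to the vector whose $j$-th entry is $\psi\pi(x_1\cdots x_{j-1})$; in particular its first entry (the empty product) equals $1$. Because this leading entry is a unit of $\mathcal{A}^{\bar{R}_0,\{1\}}$, projection to the first coordinate splits $Q$, which is therefore an injection onto a free rank-one direct summand. Hence I would conclude that the Alexander module is itself free,
\[
\mathcal{A}_\psi^{\bar{R}_0,\{1\}}\cong\left(\mathcal{A}^{\bar{R}_0,\{1\}}\right)^{s-1},
\]
the class $dx_1$ being eliminated in favour of the remaining $dx_2,\ldots,dx_s$.

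Finally I would feed this isomorphism into the Crowell exact sequence (\ref{CrowellEx}), transporting the map $\theta_2$ across the identification and renaming it $d_1$; this yields verbatim the Blanchfield--Lyndon sequence (\ref{BlanchfieldL}), exactness being inherited automatically since only the middle term has been replaced by an isomorphic free module. The one place demanding care---and the only real content---is the freeness of $\mathcal{A}_\psi^{\bar{R}_0,\{1\}}$; the whole argument hinges on the single relator $x_1\cdots x_s$ having a unit (namely $1$) among its Fox derivatives. This is precisely what fails once the torsion relations $x_i^{e_i}$ are reinstated for $\Gamma\neq\{1\}$, where the corresponding Fox derivatives are the norm elements $1+\psi(x_i)+\cdots+\psi(x_i)^{e_i-1}$, zero-divisors rather than units. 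I expect no genuine obstacle beyond bookkeeping, provided the resolution machinery of Proposition~\ref{freeApsi} is applied to the correct one-relator presentation rather than to (\ref{Ggroup-def}).
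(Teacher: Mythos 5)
Your argument is correct. The paper itself does not spell out a proof here --- it simply refers to \cite[p.~118]{Morishita2011-yw} for the discrete case and asserts that the pro-$\ell$ case is analogous --- so what you have done is supply the standard argument that the citation hides. Your key step is exactly the right one: when $\Gamma=\{1\}$ the group $G$ of eq.~(\ref{Ggroup-def}) is the free pro-$\ell$ group $\mathfrak{F}_{s-1}$ with the one-relator presentation $\langle x_1,\ldots,x_s\mid x_1\cdots x_s=1\rangle$, the column $Q$ of Fox derivatives $\psi\pi(x_1\cdots x_{j-1})$ has the unit $1$ in its first slot, and the unitriangular change of basis $e_1\mapsto Q(1)$, $e_j\mapsto e_j$ ($j\geq 2$) exhibits $\mathrm{Im}(Q)$ as a free rank-one direct summand, so $\mathcal{A}_\psi^{\bar{R}_0,\{1\}}\cong(\mathcal{A}^{\bar{R}_0,\{1\}})^{s-1}$ freely on $dx_2,\ldots,dx_s$; substituting into (\ref{CrowellEx}) gives (\ref{BlanchfieldL}). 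Your closing remark correctly identifies why this collapses for $\Gamma\neq\{1\}$ (the norm elements $\Sigma_i$ are zero-divisors, which is exactly why the paper must work with $\mathrm{coker}\,Q$ as in Lemma \ref{IMQ} in that case). The only cosmetic alternative, equivalent to yours, is to run Proposition \ref{freeApsi} directly on the relator-free presentation of $\mathfrak{F}_{s-1}$ on $s-1$ generators, which gives $\mathcal{A}_\psi\cong\mathcal{A}^{s-1}$ with basis $dx_1,\ldots,dx_{s-1}$ without any elimination; this matches the normalization in which $d_1(dx_i)=\psi(x_i)-1=u_i$.
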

\begin{proof}
See \cite[p.118]{Morishita2011-yw} for the discrete case and the pro-$\ell$ case follows similarly.
\end{proof}
\subsubsection{Alexander modules for generalised Fermat curves
}
% $\mathfrak{F}'_{s-1}/\mathfrak{F}''_{s-1}$ as an $\mathcal{A}$-module}
%
%
\label{FreedomOrDeath}
It is clear that the group $\mathfrak{F}'_{s-1}/\mathfrak{F}''_{s-1}$ is generated as an $\mathcal{A}$-module by the elements $[x_i,x_j]$ for $1\leq i < j \leq s-1$. 
% {\color{red} Cojugation action!}

In what follows $\mathfrak{F}_{s-1}'=\bar{R}_0$ in the context of eq. (\ref{short-def}).
The structure of $\mathfrak{F}'_{s-1}/\mathfrak{F}''_{s-1}$ as an $\mathcal{A}$-module is expressed in terms of the Crowell exact sequence, see section \ref{sec:Crowell}, related to the short exact sequence:
\[
1 \rightarrow \mathfrak{F}_{s-1}' \rightarrow 
\mathfrak{F}_{s-1} 
\stackrel{\psi}{\longrightarrow} \mathfrak{F}_{s-1}^{\mathrm{ab}} \rightarrow 1,
\]
\[
0 \rightarrow (\mathfrak{F}'_{s-1})^{\mathrm{ab}}=\mathfrak{F}_{s-1}'/\mathfrak{F}_{s-1}''
\rightarrow 
 \mathcal{A}_\psi
\rightarrow 
\mathbb{Z}_\ell[[u_1,\ldots,u_{s-1}]]
\rightarrow 
\mathbb{Z}_\ell
\rightarrow 
0,
\]
where $ \mathcal{A}_\psi= \mathcal{A}_\psi^{\mathfrak{F}_{s-1}',\{1\}}$ is the Alexander module and 
\[
\mathcal{A}=\mathcal{A}^{\mathfrak{F}_{s-1}',\{1\}}=\mathbb{Z}_\ell [[u_1,\ldots,u_{s-1}]].
\]
% The module $A_\psi$ can be described as the cokernel of the map:
% \[
% d_2:\mathcal{A}^s \rightarrow \mathcal{A}^r
% \]
% \[
% (b_1,\ldots,b_s) \mapsto \sum_{j=1}^r b_i (\psi\circ \pi)\frac{\partial R_i}{\partial x_j}
% \]
% \todo{correct this!}

%
%
\begin{example} \label{AlexFermat}
Assume that in eq. (\ref{short-def}) the group $H_0=(\Z/\ell^k\Z)^{s-1}$ and 
consider the open  generalized Fermat curve with fundamental group 
$\bar{R}_0=\mathfrak{F}_{s-1}'$.
Let $\mathfrak{R}_k=\Gamma$ be the smallest  closed normal subgroup of $\mathfrak{F}_{s-1}$ generated by $x_1^{\ell^k},\ldots,x_{s-1}^{\ell^k}$. 
The group $G=\mathfrak{F}_{s-1,k}=\mathfrak{F}_{s-1}/\mathfrak{R}_k$ admits the presentation: 
\[
\mathfrak{F}_{s-1,k}=
\left\langle
x_1,\ldots,x_{s} |x_1^{\ell^k}=\cdots =x_{s}^{\ell^k}=x_1\cdots x_s=1 
\right\rangle.
\]
Denote the images of the elements $x_i$ in $H_0$ by 
$\bar{x}_i$.
It is clear that $\mathcal{A}_\psi^{\mathfrak{F}'_{s-1},\mathfrak{R}_k}$ is a free $\Z_\ell$-module of rank
 \[
\mathrm{rank}_{\Z_\ell}(
\mathrm{coker}\; Q)
=
 s(\ell^k)^{(s-1)}-\mathrm{rank}_{\Z_\ell}(Q).
 \]
Observe that $\mathcal{A}^{\mathfrak{F}'_{s-1},\mathfrak{R}_k}\cong \Z_\ell[H_0]$
is a free $\Z_\ell$-module of rank 
$(\ell^k)^{s-1}$.
%  since it contains elements
% \begin{equation} \label{elinAlexmodule}
% \sum_{i_1,\ldots,i_{s-1}=0}^{\ell^k-1}
%  a_{i_1,\ldots,i_{s-1}}
%  \bar{x}_1^{i_1}\cdots \bar{x}_{s-1}^{i_{s-1}}, \text{ where } a_{i_1,\ldots,i_{s-1}} \in \Z_\ell.
% \end{equation}
% \todo{explain the passage from $x_i$ to $\bar{x}_i$}
By induction  we can prove
\begin{align}
\nonumber
\frac{\partial x_i^{\ell^k}}{\partial x_j} &= 
\delta_{ij}(1+x_i+x_i^2+\cdots + x_i^{\ell^k-1}) 
\text{ for } 1\leq j \leq s
\\
\frac{\partial x_1x_2\cdots x_{s}} {\partial x_j} &= x_1 \cdots x_{j-1}
\label{derive-prod}
\end{align}
Set $\Sigma_i=1+\bar{x}_i+\cdots+ \bar{x}_i^{\ell^k-1}$.
The map $Q$ in eq. (\ref{Alexander-module1})
is given by  the matrix 
on the left of the following equation
\cite[cor. 9.6]{Morishita2011-yw}
\begin{equation} \label{rankd2}
\begin{pmatrix}
\Sigma_1 &  0 & \cdots & 0 & 1 \\
0 & \Sigma_2  & \ddots & \vdots & \bar{x}_1 \\
\vdots & \ddots    & \ddots  & 0 &  \vdots \\ 
0 & \cdots  & 0 & \Sigma_s & \bar{x}_1 \bar{x}_2 \cdots \bar{x}_{s-1}
\end{pmatrix}
\begin{pmatrix}
\beta_1 \\ \vdots \\ \beta_{s+1}
\end{pmatrix}=
\begin{pmatrix}
\Sigma_1 \beta_1 + \beta_{s+1} \\
\Sigma_2 \beta_2 + \bar{x}_1\beta_{s+1} \\
\vdots\\
\Sigma_s \beta_s + \bar{x}_1\cdots \bar{x}_{s-1}\beta_{s+1}
\end{pmatrix}
\end{equation}
where $\beta_i\in \mathcal{A}^{\mathfrak{F}_{s-1}',\mathfrak{R}_k}$ for $1\leq i \leq s$.
% Let $\bar{x}_i$ denote the image of $x_i$  in the group $H$.
Observe that 
\[
\Sigma_i \bar{x}_i^\nu = \Sigma_i \text{ for all } 0\leq \nu \leq \ell^{k}-1.
\]
% {\color{blue}
% \highlight[id=AK,comment=13/3/2019]{Compute a resolution} The kernel of the map $Q$ consists of elements $\beta_1,\ldots,\beta_{s+1} \in k[H_0]$ satisfying the set of equations
% \begin{align*}
% \Sigma_1 \beta_1 &=  -\beta_{s+1} \\
% \Sigma_2 \beta_2 &=  -x_1 \beta_{s+1} \\
% \Sigma_3 \beta_3 &=  -x_1 x_2 \beta_{s+1}\\
% \vdots &\; \qquad \vdots \\
% \Sigma_s \beta_s &=  -x_1\cdots x_{s-1} \beta_{s+1} 
% \end{align*}
% Since $x_i \Sigma_i=x_i$ the above sets of equations are equivalent to 
% \[
% \Sigma_1 \beta_1=\Sigma_2 \beta_2 = \cdots \Sigma_s \beta_s =\beta_{s+1}.
% \]
% Now using that $(1-\bar{x}_i)\Sigma_i=1$, we obtain that 
% \[
% \mathrm{ker}Q= \mathrm{Im}(\mathrm{Q_0}) \]
% where 
% \begin{align*}
% Q_0: \Z_\ell[H_0] &\longrightarrow \Z_\ell[H_0]
% \\
% \beta &\longmapsto ( u_1 \beta, u_2 \beta, \ldots, u_s \beta, \beta  )
% \end{align*}
% Therefore the following exact sequence is a resolution for the Alexander module $\mathcal{A}_\psi^{\mathfrak{F}_{s-1}',\mathfrak{R}_k}$:
% \begin{equation}
% \label{resolAlex}
% 0 \rightarrow \Z_\ell[H_0]
% \stackrel{Q_0}{\longrightarrow} \Z_\ell[H_0]^{s+1}
% \stackrel{Q}{\longrightarrow} \Z_\ell[H_0]^s 
%  \rightarrow
% \mathcal{A}_\psi^{\mathfrak{F}_{s-1}',\mathfrak{R}_k}
% \rightarrow 0
% \end{equation}
% {\color{red}
% \[
% \mathrm{rank}_{\Z_\ell} \mathrm{Im}Q = 
% (s+1) (\ell^k)^{s-1} - (\ell^k)^s
% \]
% \[
% \mathrm{rank}_{\Z_\ell}Q=
% s(\ell^k)^{s-2}+(\ell^k)^{s-1}-1.
% \]
% }
% }

% \[
% (1+\bar{x}_i+\bar{x}_i^2+\cdots+\bar{x}_i^{\ell^k-1})\bar{x}_i^\nu=1+\bar{x}_i+\cdots+\bar{x}_i^{\ell^k-1}
% \]
\begin{lemma}
For $1\leq i \leq s-1$
the following equation holds
\[
\Sigma_i \cdot \Z_\ell[
H_0
% (\Z/\ell^k \Z)^{s-1}
]=
  \Sigma_i \cdot 
\Z_\ell \left[
\bigoplus_{
\substack{
\nu=1
\\
\nu\neq i 
}}^{s-1} 
\Z/\ell^k \Z
\right].
\]
On the other hand the module $\Sigma_s \Z_\ell[H_0]$ contains all elements invariant under the action of the product  $\bar{x}_1 \cdots \bar{x}_{s-1}$ and is a free $\Z_\ell$-submodule of $\Z_\ell[H_0]$. 
% \highlight[id=AK,comment=20/3/2019]{What about $\Sigma_s$?}
\end{lemma}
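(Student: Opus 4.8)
The plan is to treat the two assertions in turn, in both cases leaning on the identity $\Sigma_i \bar x_i^\nu = \Sigma_i$ recorded just above the statement, which expresses that each \emph{norm element} is fixed by multiplication by the generator of the cyclic group it sums over. For $1\le i\le s-1$ the subgroup $\langle\bar x_i\rangle$ is a direct factor of $H_0$, with complement $H_0^{(i)}:=\langle\bar x_\nu:\nu\ne i\rangle\cong\bigoplus_{\nu\ne i}\Z/\ell^k\Z$. The inclusion $\supseteq$ is immediate, and for $\subseteq$ I would write a $\Z_\ell$-basis of $\Z_\ell[H_0]$ as the monomials $\bar x_i^{a}m$ with $0\le a\le\ell^k-1$ and $m$ ranging over the elements of $H_0^{(i)}$; since $H_0$ is abelian, $\Sigma_i\bar x_i^{a}m=(\Sigma_i\bar x_i^a)m=\Sigma_i m\in\Sigma_i\Z_\ell[H_0^{(i)}]$, and summing over basis elements yields the first identity.

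For the statement about $\Sigma_s$ I would first translate the relation $x_1\cdots x_s=1$ into $\bar x_s=(\bar x_1\cdots\bar x_{s-1})^{-1}$ in $H_0$, and note that $\bar x_1\cdots\bar x_{s-1}$ corresponds to $(1,\dots,1)\in(\Z/\ell^k\Z)^{s-1}$, hence has order exactly $\ell^k$. Setting $c=\bar x_1\cdots\bar x_{s-1}$ and $C=\langle c\rangle$, the element $\Sigma_s$ is precisely the norm $\sum_{j=0}^{\ell^k-1}c^j$ of the cyclic group $C$. Viewing $\Z_\ell[H_0]$ as a free $\Z_\ell[C]$-module via a transversal $g_1,\dots,g_r$ of $C$ in $H_0$, with $r=(\ell^k)^{s-2}$, so that $\Z_\ell[H_0]=\bigoplus_i\Z_\ell[C]g_i$, the $C$-invariants decompose as $\bigoplus_i(\Z_\ell[C])^C g_i$. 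A one-line computation shows that $\sum_j a_j c^j$ is fixed by $c$ exactly when all the $a_j$ agree, so $(\Z_\ell[C])^C=\Z_\ell\cdot\Sigma_s$; hence every element fixed by multiplication by $c=\bar x_1\cdots\bar x_{s-1}$ lies in $\bigoplus_i\Sigma_s\Z_\ell g_i\subseteq\Sigma_s\Z_\ell[H_0]$, which is the required containment (in fact an equality).

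For freeness over $\Z_\ell$ I would use that $\Sigma_s\Z_\ell[C]=\Z_\ell\cdot\Sigma_s$, so $\Sigma_s\Z_\ell[H_0]=\bigoplus_{i=1}^r\Z_\ell(\Sigma_s g_i)$; the elements $\Sigma_s g_i$ are supported on the distinct cosets $Cg_i$, hence have pairwise disjoint support in the group basis of $\Z_\ell[H_0]$ and are therefore $\Z_\ell$-linearly independent, making $\Sigma_s\Z_\ell[H_0]$ free over $\Z_\ell$ of rank $(\ell^k)^{s-2}$. I expect the only genuinely delicate point to be the bookkeeping for $\bar x_s$: recognizing it as $(\bar x_1\cdots\bar x_{s-1})^{-1}$ of full order $\ell^k$, so that $\Sigma_s$ really is the norm of a cyclic group of order $\ell^k$ and the cosets $Cg_i$ partition $H_0$. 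Once this is in place, the free $\Z_\ell[C]$-module description of the invariants and the disjoint-support argument are routine.
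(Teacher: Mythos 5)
Your proof is correct and follows essentially the same route as the paper: the first identity via the decomposition of $\Z_\ell[H_0]$ along the direct factor $\langle \bar{x}_i\rangle$ together with the identity $\Sigma_i\bar{x}_i^{a}=\Sigma_i$ (the paper phrases this as a tensor-product decomposition $\Z_\ell[H_0]=\bigotimes_\nu\Z_\ell[\Z/\ell^k\Z]$, which is the same as your monomial-basis argument), and the second by recognizing $\Sigma_s$ as the norm element of the cyclic group $C=\langle\bar{x}_1\cdots\bar{x}_{s-1}\rangle$ of order $\ell^k$ — the paper's change of basis of $H_0$ to $\bar{x}_2,\ldots,\bar{x}_s$ amounts to your choice of transversal for $C$. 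Your explicit computation that $(\Z_\ell[C])^C=\Z_\ell\cdot\Sigma_s$, giving the containment of all $\bar{x}_1\cdots\bar{x}_{s-1}$-invariants in $\Sigma_s\Z_\ell[H_0]$ and the rank $(\ell^k)^{s-2}$, usefully fills in a step the paper only declares ``clear''.
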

\begin{proof}
Write
\[
\Z_\ell[H_0]=
\Z_\ell \left[  
\bigoplus_{\nu=1}^{s-1} \Z/\ell^k \Z
\right]
=
\bigotimes_{\nu=1}^{s-1}
\Z_\ell \left[ 
 \Z/\ell^k \Z
\right].
\]
Therefore the multiplication by $\Sigma_i$ gives rise to the tensor product
\[
\left(
\bigotimes_{\nu=1}^{i-1}
\Z_\ell \left[ 
 \Z/\ell^k \Z
\right]
\right)
\bigotimes 
\left(
\Sigma_i \Z_\ell \left[ 
 \Z/\ell^k \Z
\right]
\right)
\bigotimes 
\left(
\bigotimes_{\nu=i+1}^{s-1}
\Z_\ell \left[ 
 \Z/\ell^k \Z
\right]
\right)=
\]
\[
\left(
\bigotimes_{\nu=1}^{i-1}
\Z_\ell \left[ 
 \Z/\ell^k \Z
\right]
\right)
\bigotimes 
\left(
\Sigma_i \Z_\ell
\right)
\bigotimes 
\left(
\bigotimes_{\nu=i+1}^{s-1}
\Z_\ell \left[ 
 \Z/\ell^k \Z
\right]
\right)
\]
and the desired result follows. 

For the case of $\Sigma_s \Z_\ell[H_0]$ invariance under the action of $\bar{x}_s=\bar{x}_1^{-1}\cdots\bar{x}_{s-1}^{-1}$ is clear. The rank computation follows by changing the basis of $H_0$ from $\bar{x}_1,\ldots,\bar{x}_{s-1}$ to the basis $\bar{x}_2,\ldots,\bar{x}_s$ and arguing as before. 
\end{proof}

% It follows an explicit proof of the above mentioned result which is less elegant and ommited.  
% Write $\beta_i$ as
% \begin{equation} \label{elinAlexmodule}
% \beta_i=
% \sum_{i_1,\ldots,i_{s-1}=0}^{\ell^k-1}
%  a_{i_1,\ldots,i_{s-1}}
%  \bar{x}_1^{i_1}\cdots \bar{x}_{s-1}^{i_{s-1}}, \text{ where } a_{i_1,\ldots,i_{s-1}} \in \Z_\ell.
% \end{equation}
% We compute 
% the product $\beta_i \Sigma_i$ 
% \begin{align*}
% \beta_i \Sigma_i &=
% \sum_{i_1,\ldots,
% \hat{i},\ldots,i_{s-1}=0}^{\ell^k-1}
% \bar{x}_1^{i_1} \cdots 
% \widehat{\bar{x}_i^{i_i}}
% \cdots
% \bar{x}_{s-1}^{i_{s-1}}
% \sum_{\nu=0}^{\ell^k-1} \Sigma_i \bar{x}_i^\nu
%  a_{i_1,\ldots,i_{s-1}}.
% \\
% % &=
% % \sum_{i_1,\ldots,
% % \hat{i},\ldots,i_{s-1}=0}^{\ell^k-1}
% % \bar{x}_1^{i_1} \cdots 
% % \widehat{\bar{x}_i^{i_i}}
% % \cdots
% % \bar{x}_{s-1}^{i_{s-1}}
% % \sum_{\nu=1}^{\ell^k-1} \Sigma_i
% %  a_{i_1,\ldots,i_{s-1}}
% % \\
% % &=
% % \sum_{i_1,\ldots,
% % \hat{i},\ldots,i_{s-1}=0}^{\ell^k-1}
% % \bar{x}_1^{i_1} \cdots 
% % \widehat{\bar{x}_i^{i_i}}
% % \cdots
% % \bar{x}_{s-1}^{i_{s-1}}
% %  a_{i_1,\ldots,i_{s-1}}.
% \end{align*}
% In the above product the $\widehat{\cdot}$ symbol denotes omitting the 
% corresponding factor.  

The image of the map $Q$ equals to the space generated by elements
\[
\begin{pmatrix}
\Sigma_1 \beta_1  \\
\Sigma_2 \beta_2  \\
\vdots\\
\Sigma_s \beta_s 
\end{pmatrix}
+
\begin{pmatrix}
1 \\
\bar{x}_1 \\
\vdots\\
\bar{x}_1\cdots \bar{x}_{s-1}
\end{pmatrix}
\beta_{s+1}.
\]
For different choices of 
$\beta_1,\ldots,\beta_s \in \mathcal{A}^{\mathfrak{F}'_{s-1},\mathfrak{R}_k}$
the first summand forms a free $\Z_\ell$-module of rank $s(\ell^k)^{s-2}$ and the second summand is a free $\Z_\ell$-module of rank 
$(\ell^k)^{s-1}$. 
Also their intersection is just $\Z_\ell$. 

Indeed, if for some $\beta_{1},\ldots,\beta_{s+1} \in \Z_\ell[H_0]$ we 
have 
\[\beta_{s+1} (1,\bar{x}_1,\ldots,\bar{x}_1\cdots \bar{x}_{s-1}) =   
( \Sigma_1 \beta_1, \ldots, \Sigma_s \beta_s)
\]
then  by comparison of the first coordinates we see that $\beta_{s+1}$ is invariant under the action of $\bar{x}_1$. So comparison of second coordinate gives us that 
$\bar{x}_1 \beta_{s+1} = \beta_{s+1}$ is invariant under the action of $\bar{x}_2$. By continuing this way we see that $\beta_{s+1}$ is invariant under the whole group $H_0$, that is $\beta_{s+1}$ belongs to the rank one $\Z_\ell$-module generated by $\Sigma_1 \Sigma_2 \cdots \Sigma_{s}$. 
In this way we see that 
\begin{lemma}
\label{IMQ}
\begin{equation}
\label{IMQeq}
\mathrm{Im}(Q) 
= 
\left(
\bigoplus_{\nu=1}^s \Sigma_i \Z_\ell[H_0] 
\right)
\bigoplus 
\Z_\ell[H_0]/\Z_\ell \Sigma_1\cdots \Sigma_{s}.
\end{equation}
Also 
\[
\mathrm{rank}_{\Z_\ell}Q=
s(\ell^k)^{s-2}+(\ell^k)^{s-1}-1.
\]
\end{lemma}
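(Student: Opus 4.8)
The plan is to assemble the facts gathered in the paragraphs just above into the stated splitting and then read off the rank by inclusion--exclusion over the domain $\Z_\ell$. Write $\mathbf{v}=(1,\bar{x}_1,\ldots,\bar{x}_1\cdots\bar{x}_{s-1})$ for the column appearing in the description of $\mathrm{Im}(Q)$, and set
\[
V_1=\bigoplus_{\nu=1}^{s}\Sigma_\nu\,\Z_\ell[H_0],
\qquad
V_2=\Z_\ell[H_0]\cdot\mathbf{v}.
\]
By the displayed description of the image of $Q$ immediately preceding the lemma we have $\mathrm{Im}(Q)=V_1+V_2$, so the whole task is to understand how $V_1$ and $V_2$ overlap.

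First I would record the ranks of the two pieces. The sum defining $V_1$ is direct for a trivial reason: the $\nu$-th term $\Sigma_\nu\Z_\ell[H_0]$ sits in the $\nu$-th coordinate of $(\Z_\ell[H_0])^{s}=\bigoplus_{\nu=1}^{s}\Z_\ell[H_0]$, and by the previous lemma on the modules $\Sigma_i\Z_\ell[H_0]$ each term is $\Z_\ell$-free of rank $(\ell^k)^{s-2}$; hence $\mathrm{rank}_{\Z_\ell}V_1=s(\ell^k)^{s-2}$. For $V_2$, projecting onto the first coordinate sends $\beta_{s+1}\mathbf{v}\mapsto\beta_{s+1}$ and is an isomorphism $V_2\xrightarrow{\ \sim\ }\Z_\ell[H_0]$, so $V_2$ is $\Z_\ell$-free of rank $(\ell^k)^{s-1}$.

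Next I would feed in the intersection computation already carried out before the lemma: if $\beta_{s+1}\mathbf{v}\in V_1$, then matching coordinates forces $\beta_{s+1}$ to be invariant first under $\bar{x}_1$, then successively under $\bar{x}_2,\ldots,\bar{x}_{s-1}$, hence under all of $H_0$. Thus under the identification $V_2\cong\Z_\ell[H_0]$ the intersection $V_1\cap V_2$ is exactly the rank-one module of $H_0$-invariants (written $\Z_\ell\,\Sigma_1\cdots\Sigma_s$ in the lemma), and it lies in $V_1$ because this invariant vector belongs to $\Sigma_\nu\Z_\ell[H_0]$ in every coordinate. The second isomorphism theorem applied to $V_1\subseteq\mathrm{Im}(Q)$ then gives the short exact sequence
\[
0\longrightarrow V_1\longrightarrow \mathrm{Im}(Q)\longrightarrow V_2/(V_1\cap V_2)\longrightarrow 0,
\qquad
V_2/(V_1\cap V_2)\cong\Z_\ell[H_0]/\Z_\ell\,\Sigma_1\cdots\Sigma_s.
\]
Since the quotient of $\Z_\ell[H_0]$ by its saturated rank-one submodule of $H_0$-invariants is $\Z_\ell$-torsion free, this sequence splits and yields the asserted decomposition \eqref{IMQeq}.

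Finally the rank is immediate from additivity of $\Z_\ell$-rank on the exact sequence $0\to V_1\cap V_2\to V_1\oplus V_2\to V_1+V_2\to 0$:
\[
\mathrm{rank}_{\Z_\ell}Q=\mathrm{rank}\,V_1+\mathrm{rank}\,V_2-\mathrm{rank}(V_1\cap V_2)=s(\ell^k)^{s-2}+(\ell^k)^{s-1}-1.
\]
The only point requiring care is the intersection step — verifying that $\bar{x}_1$-invariance propagates coordinate by coordinate to full $H_0$-invariance so that $V_1\cap V_2$ is genuinely rank one — but this has essentially been done in the text, and the remainder is bookkeeping of ranks over the domain $\Z_\ell$.
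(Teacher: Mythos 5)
Your proof is correct and follows essentially the same route as the paper, which establishes the lemma by exactly this decomposition $\mathrm{Im}(Q)=V_1+V_2$, the rank counts $s(\ell^k)^{s-2}$ and $(\ell^k)^{s-1}$, and the coordinate-by-coordinate invariance argument showing $V_1\cap V_2\cong\Z_\ell$. You additionally make explicit two points the paper leaves implicit --- that the invariant vector indeed lies in every $\Sigma_\nu\Z_\ell[H_0]$, and that the extension splits because the quotient by the saturated invariant line is torsion-free --- which is a welcome tightening rather than a departure.
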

We would like to compute the cokernel of $Q$ 
as a
 $\Z_\ell[H_0]$-module. This computation lies within the theory of integral representation theory. 
This seems a very difficult problem since a complete set of representatives of the classes of indecomposable modules for groups of the form $(\Z/\ell^k \Z)^t$ seems to be known only for $t=1$ and  $k=1,2$,  see \cite{MR524365}. In this article we will not consider the problem in the integral representation setting and   instead we will consider the simpler problem of determination of the $H_0$-action on the space
$H_1( C_{k,s-1} ,\mathbb{F})$, where $\mathbb{F}$ is a field  which contains $\Z_\ell$ and  the $\ell^k$-roots of unity. 
 Let us fix a primitive $\ell^k$ root of unity $\zeta_{\ell^k}$. 
Set
\[
\mathcal{I}:= ( \Z \cap [0,\ell^k))^{s-1}.
\]
If $\mathbf{i}\in \mathcal{I}$, set  $i_s := i_1 + \cdots + i_{s-1}$. Now define 
\begin{equation}
\label{zDef}
z(\mathbf{i}):=\# \{
j:1 \leq j \leq s \text{ and } i_j \equiv 0 \mod \ell^k
\}.
\end{equation}
Now set
\begin{equation}
\label{smallccomp}
c_{\mathbf{i}}:=
\begin{cases}
1+z(\mathbf{i}) & z(\mathbf{i}) <s\\
z(\mathbf{i}) & z(\mathbf{i})=s
\end{cases}
\end{equation}
For an element $\mathbf{i}=(i_1,\ldots,i_{s-1})\in \mathbb{N}^{s-1}$ we define a character 
$\chi_{\mathbf{i}}$ on $H_0$ by
\[
\chi_{\mathbf{i}} 
\left(
\bar{x}_1^{\nu_1},\ldots,\bar{x}_{s-1}^{\nu_{s-1}}
\right)
=\zeta^{\sum_{\mu=1}^{s-1} \nu_\mu i_\mu}.
\]
We have the following 
\begin{lemma}
% Consider the character $\chi_{i_1,\ldots,i_{s-1}}$ on $H_0$ given by 
% \highlight[id=AK,comment=20/3/2019]{
  % The group is generated in $a_1,\ldots,a_{s-1}$!
% }
% \[
% \chi_{i_1,\ldots,i_{s-1}}(\bar{x}_1^{\nu_1},\ldots,\bar{x}_{s-1}^{\nu_{s-1}})=
% \zeta_{\ell^k}^{\sum_{\mu=1}^{s-1}  \nu_\mu i_\mu}.
% \]
% where $\zeta_{\ell^k}$ is a primitive $\ell^k$-root of unity. 
We have the following decomposition  
% \[
% \mathrm{Im}(Q)\otimes K=\bigoplus_{i_1,\ldots,i_{s-1}=0}^{\ell^k-1}
% c_{i_1,\ldots,i_{s-1}} \chi_{i_1,\ldots,i_{s-1}},
% \]
\[
\mathrm{Im}(Q) \otimes \mathbb{F} = \bigotimes_{\mathbf{i} \in \mathcal{I}} 
\mathbb{F}
c_{\mathbf{i}}
\chi_{\mathbf{i}},
\]
where $c_{\mathbf{i}}\in \mathbb{N}$ is the multiplicity of the corresponding character. 
% For the $s-1$-tuple of integers $\bar{i}=(i_1,\ldots,i_{s-1})$, $0\leq i_1,\ldots,i_{s-1} \leq \ell^k-1$ let $z\left(\bar{i}\right)$ be the number of $i_1,\ldots,i_{s-1},i_1+\cdots+i_{s-1}$ that are equal to zero modulo $\ell^k$.  We have
% \begin{equation}
% \label{smallccomp}
% c_{i_1,\ldots,i_{s-1}}=
% \begin{cases}
% 1+z\left(\bar{i}\right) & \text{ if }  0 \leq z\left(\bar{i}\right) \leq s-1 \\
% s=z\left(\bar{i}\right) & \text{ if } z\left(\bar{i}\right)=s.
% \end{cases}
% \end{equation}
\end{lemma}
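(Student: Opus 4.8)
The plan is to tensor the decomposition of $\mathrm{Im}(Q)$ in eq. (\ref{IMQeq}) of Lemma \ref{IMQ} with $\mathbb{F}$ and split every piece into one-dimensional character spaces. All the $\Z_\ell$-modules involved are free and $\mathbb{F}$ is a characteristic-zero field containing $\Z_\ell$ and the $\ell^k$-th roots of unity, so $\mathbb{F}$ is a splitting field for the abelian group $H_0\cong(\Z/\ell^k\Z)^{s-1}$ and Maschke's theorem gives
\[
\mathbb{F}[H_0]=\bigoplus_{\mathbf{i}\in\mathcal{I}}\mathbb{F}\chi_{\mathbf{i}},
\]
the decomposition into the $(\ell^k)^{s-1}$ distinct one-dimensional characters. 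Since tensoring with $\mathbb{F}$ is exact and the summands in eq. (\ref{IMQeq}) are $\Z_\ell[H_0]$-submodules (the line $\Z_\ell\,\Sigma_1\cdots\Sigma_s$ is $H_0$-invariant, so the quotient makes sense as an $H_0$-module), it suffices to compute, for each $\mathbf{i}$, the multiplicity of $\chi_{\mathbf{i}}$ separately in $\bigoplus_{\nu=1}^{s}\Sigma_\nu\mathbb{F}[H_0]$ and in $\mathbb{F}[H_0]/\mathbb{F}\,\Sigma_1\cdots\Sigma_s$, and then add.

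For the first summand I would evaluate each $\Sigma_\nu$ character by character. For $1\le\nu\le s-1$ one computes $\chi_{\mathbf{i}}(\Sigma_\nu)=\sum_{t=0}^{\ell^k-1}\zeta_{\ell^k}^{i_\nu t}$, which is $\ell^k$ when $i_\nu\equiv0\pmod{\ell^k}$ and $0$ otherwise; for $\nu=s$ the relation $\bar{x}_s=\bar{x}_1^{-1}\cdots\bar{x}_{s-1}^{-1}$ gives $\chi_{\mathbf{i}}(\Sigma_s)=\sum_{t}\zeta_{\ell^k}^{-i_s t}$ with $i_s=i_1+\cdots+i_{s-1}$, which is nonzero exactly when $i_s\equiv0$. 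Because $\ell^k$ is a unit in $\mathbb{F}$, multiplication by $\Sigma_\nu$ on $\mathbb{F}\chi_{\mathbf{i}}$ is an isomorphism when $i_\nu\equiv0$ and is zero otherwise, so the $\chi_{\mathbf{i}}$-component of $\Sigma_\nu\mathbb{F}[H_0]$ is one-dimensional precisely when $i_\nu\equiv0\pmod{\ell^k}$. Summing over $\nu=1,\dots,s$, with the convention above for $i_s$, the multiplicity of $\chi_{\mathbf{i}}$ in the first summand equals $\#\{\nu:1\le\nu\le s,\ i_\nu\equiv0\pmod{\ell^k}\}=z(\mathbf{i})$ of eq. (\ref{zDef}).

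For the second summand the key identity is $\Sigma_1\cdots\Sigma_{s-1}=\sum_{h\in H_0}h=:N$, so that $\Sigma_1\cdots\Sigma_s=N\Sigma_s=\ell^k N$ using $hN=N$ for all $h\in H_0$. As $\ell^k$ is a unit, over $\mathbb{F}$ we have $\mathbb{F}\,\Sigma_1\cdots\Sigma_s=\mathbb{F}N$, and $\mathbb{F}N$ is exactly the trivial-character line $\mathbb{F}\chi_{\mathbf{0}}$ because $N$ is $H_0$-invariant. Hence $\mathbb{F}[H_0]/\mathbb{F}N$ contributes multiplicity $1$ to every $\chi_{\mathbf{i}}$ with $\mathbf{i}\ne\mathbf{0}$ and multiplicity $0$ to $\chi_{\mathbf{0}}$.

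Adding the two contributions gives $c_{\mathbf{i}}=z(\mathbf{i})$ when $\mathbf{i}=\mathbf{0}$ and $c_{\mathbf{i}}=z(\mathbf{i})+1$ when $\mathbf{i}\ne\mathbf{0}$; since the constraint $0\le i_\mu<\ell^k$ forces $z(\mathbf{i})=s$ if and only if $\mathbf{i}=\mathbf{0}$, this is precisely the case distinction defining $c_{\mathbf{i}}$ in eq. (\ref{smallccomp}), which proves the claim. I do not expect a deep obstacle: the argument is a character computation, and the only points needing care are the bookkeeping for the index $\nu=s$ (where $\bar{x}_s$ is not a free generator and one must invoke $\bar{x}_1\cdots\bar{x}_s=1$ together with the convention $i_s=i_1+\cdots+i_{s-1}$), and checking that the decomposition of Lemma \ref{IMQ} is genuinely one of $\Z_\ell[H_0]$-modules so that passing to $\mathbb{F}$ and splitting into characters commute.
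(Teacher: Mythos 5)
Your proposal is correct and follows essentially the same route as the paper: tensor the decomposition of Lemma \ref{IMQ} with $\mathbb{F}$, split $\mathbb{F}[H_0]$ into its $(\ell^k)^{s-1}$ characters, observe that $\chi_{\mathbf{i}}$ survives multiplication by $\Sigma_\nu$ exactly when $i_\nu\equiv 0 \pmod{\ell^k}$ (with the convention $i_s=i_1+\cdots+i_{s-1}$), and note that the quotient summand contributes $1$ for every $\mathbf{i}\neq\mathbf{0}$. Your explicit identification $\mathbb{F}\,\Sigma_1\cdots\Sigma_s=\mathbb{F}N=\mathbb{F}\chi_{\mathbf{0}}$ is a welcome detail that the paper leaves implicit, but it is not a different argument.
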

\begin{proof}
Consider the decomposition given in lemma \ref{IMQ}.
The module $\mathbb{F}[H_0]$ contains once every possible character, therefore
% \[
% K[H_0]=\bigoplus_{i_1,\ldots,i_{s-1}=0}^{\ell^k-1}
%  K \chi_{i_1,\ldots,i_{s-1}}.
% \]
\[
\mathbb{F}[H_0] = \bigotimes_{\mathbf{i} \in \mathcal{I}} \mathbb{F} {\chi_{\mathbf{i}}}
\]
On the other hand the modules $\Sigma_i \mathbb{F}[H_0]$ for $0\leq i \leq s-1$ are trivially acted on by elements $\bar{x}_i$. This means that 
% \[
% \Sigma_i K[H_0]=\bigoplus_{\nu_1,\ldots,\hat{\nu}_i,\ldots,\nu_{s-1}=0}^{\ell^k-1}
%  K \chi_{\nu_1,\ldots,\nu_{i-1},0,\nu_{i+1},\ldots,\nu_{s-1}}.
% \]
\[
\Sigma_i \mathbb{F}[H_0]=\bigoplus_{
\substack{
\mathbf{i} \in \mathcal{I}
\\
\mathbf{i}=(\nu_1,\ldots,\nu_{i-1},0,\nu_{i+1},\ldots,\nu_{s-1})
}
}
 \mathbb{F} \chi_{\mathbf{i}}.
\]
Also the module $\Sigma_s \mathbb{F}[H_0]$ contains elements which are invariant by elements of the group generated by
$\bar{x}_1\cdots \bar{x}_{s-1}$, since $\bar{x}_s=\bar{x}_1^{-1} \cdots \bar{x}_{s-1}^{-1}$. This means that all characters which appear in the decomposition of $\Sigma_s \mathbb{F}[H_0]$ on $\bar{x}_1^\nu \cdots \bar{x}_{s-1}^\nu$ should 
give $1$, which is equivalent to 
% \[
% \chi_{i_1,\ldots,i_{s-1}}(\bar{x}_1^\nu \cdots \bar{x}_{s-1}^\nu)=\zeta^{\nu \sum_{\mu=1}^{s-1} i_\nu}=1 \Rightarrow \sum_{\mu=1}^{s-1}i_\nu\equiv 0 \mod \ell^k.
% \]
\[
\chi_{\mathbf{i}}(\bar{x}_1^\nu \cdots \bar{x}_{s-1}^\nu)=\zeta_{\ell^k}^{\nu \sum_{\mu=1}^{s-1} i_\mu}=1 \Rightarrow \sum_{\mu=1}^{s-1}i_\mu\equiv 0 \mod \ell^k.
\]
Therefore, the decomposition into characters is given by
\[
\Sigma_s \mathbb{F}[H_0]=\bigoplus_{
  \substack{
  \mathbf{i} \in \mathcal{I} \\
  i_1,\ldots,i_{s-1}=0 \\
  i_1+\cdots+ i_{s-1}=0
  }
}
% ^{\ell^k-1}
\mathbb{F} \chi_{\mathbf{i}}.
\]
Given a character $\chi_{\mathbf{i}}$ we now count the number of times it appears. It appears on the  summands $\Sigma_j \mathbb{F}[H_0]$ for $0 \leq j \leq s-1$ when $i_j=0$ and in the summand $\Sigma_s \mathbb{F}[H_0]$ when $i_1+\cdots+i_{s-1}\equiv 0 \mod \ell^k$. Also it appears on $\mathbb{F}[H_0]/\Sigma_1\cdots \Sigma_{s}$ only if $(i_1,\ldots,i_{s-1})\neq (0,\ldots,0)$.
\end{proof}
\begin{lemma}
\label{alexHomodeco}
We have
\begin{align}
\label{rankAlexFermat}
\mathrm{rank}_{\Z_\ell} 
\mathcal{A}_\psi^{\mathfrak{F}_{s-1}',\mathfrak{R}_k} &=(s-1)(\ell^k)^{s-1}-s(\ell^k)^{s-2}+1
\end{align}
and 
\begin{equation}
\label{eigenAdeco}
\mathcal{A}_\psi^{\mathfrak{F}_{s-1}',\mathfrak{R}_k} \otimes \mathbb{F}=
\bigoplus_{\mathbf{i} \in \mathcal{I}}
% ^{\ell^k -1}
(s-c_{\mathbf{i}})
\chi_{\mathbf{i}}.
\end{equation}
\end{lemma}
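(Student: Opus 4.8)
The plan is to read both assertions directly off the free presentation $\mathcal{A}_\psi^{\mathfrak{F}_{s-1}',\mathfrak{R}_k}=\mathrm{coker}(Q)$ of eq. (\ref{Qfree-res}), feeding in the two preceding lemmas. For the rank formula (\ref{rankAlexFermat}) I would work with the short exact sequence of $\Z_\ell$-modules $0\to \mathrm{Im}(Q)\to (\mathcal{A})^s \to \mathcal{A}_\psi \to 0$, where $\mathcal{A}=\mathcal{A}^{\mathfrak{F}_{s-1}',\mathfrak{R}_k}\cong \Z_\ell[H_0]$ is free of rank $(\ell^k)^{s-1}$, so that $(\mathcal{A})^s$ has rank $s(\ell^k)^{s-1}$. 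Since $\Z_\ell$-rank is additive along this sequence, $\mathrm{rank}_{\Z_\ell}\mathcal{A}_\psi = s(\ell^k)^{s-1}-\mathrm{rank}_{\Z_\ell}(Q)$; substituting the value $\mathrm{rank}_{\Z_\ell}Q=s(\ell^k)^{s-2}+(\ell^k)^{s-1}-1$ supplied by Lemma \ref{IMQ} and simplifying yields $(s-1)(\ell^k)^{s-1}-s(\ell^k)^{s-2}+1$.

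For the character decomposition (\ref{eigenAdeco}) I would tensor the presentation with $\mathbb{F}$. Right-exactness of $\otimes$ gives $\mathcal{A}_\psi\otimes\mathbb{F}=\mathrm{coker}(Q\otimes\mathbb{F})$, and since $\mathbb{F}$ is a characteristic-zero field containing $\Z_\ell$ it is torsion-free, hence flat over the discrete valuation ring $\Z_\ell$, so also $\mathrm{Im}(Q\otimes\mathbb{F})=\mathrm{Im}(Q)\otimes\mathbb{F}$ and $(\mathcal{A}\otimes\mathbb{F})^s=(\mathbb{F}[H_0])^s$. Because $\mathbb{F}$ contains the $\ell^k$-th roots of unity, the group algebra $\mathbb{F}[H_0]$ is split semisimple and decomposes into one-dimensional isotypic pieces $\bigoplus_{\mathbf{i}\in\mathcal{I}}\mathbb{F}\chi_{\mathbf{i}}$, each occurring exactly once; as $Q$ is $\Z_\ell[H_0]$-linear, the map $Q\otimes\mathbb{F}$ is $\mathbb{F}[H_0]$-equivariant and therefore respects this grading, splitting as a direct sum over $\mathbf{i}$ of the linear maps $\mathbb{F}^{s+1}\to\mathbb{F}^s$ obtained by evaluating the entries of $Q$ at $\chi_{\mathbf{i}}$.

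On the $\chi_{\mathbf{i}}$-isotypic component the rank of $Q$ equals $c_{\mathbf{i}}$, which is exactly the content of the lemma immediately preceding the statement, giving $\mathrm{Im}(Q)\otimes\mathbb{F}=\bigoplus_{\mathbf{i}\in\mathcal{I}}\mathbb{F}\,c_{\mathbf{i}}\chi_{\mathbf{i}}$. Passing to cokernels one isotypic component at a time then produces multiplicity $s-c_{\mathbf{i}}$ for $\chi_{\mathbf{i}}$ in $\mathcal{A}_\psi\otimes\mathbb{F}$, which is (\ref{eigenAdeco}). As an internal consistency check I would sum these dimensions: using that each of the $s$ coordinate conditions $i_j\equiv 0 \bmod \ell^k$ (for $1\le j\le s-1$, and the condition $i_1+\cdots+i_{s-1}\equiv 0$ for $j=s$) holds on exactly $(\ell^k)^{s-2}$ multi-indices, one gets $\sum_{\mathbf{i}}z(\mathbf{i})=s(\ell^k)^{s-2}$, hence $\sum_{\mathbf{i}}(s-c_{\mathbf{i}})=(s-1)(\ell^k)^{s-1}-s(\ell^k)^{s-2}+1$, recovering (\ref{rankAlexFermat}).

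The only genuinely delicate point is the bookkeeping already packaged into $c_{\mathbf{i}}$: one must check that the last column of $Q$, whose evaluation $(1,\chi_{\mathbf{i}}(\bar{x}_1),\ldots,\chi_{\mathbf{i}}(\bar{x}_1\cdots\bar{x}_{s-1}))$ at any character has all coordinates nonzero, raises the rank of the diagonal block $\mathrm{diag}(\chi_{\mathbf{i}}(\Sigma_1),\ldots,\chi_{\mathbf{i}}(\Sigma_s))$ by one precisely when $z(\mathbf{i})<s$ and not when $z(\mathbf{i})=s$. Since that rank count is carried out in the preceding lemma, the present statement is its cokernel reformulation combined with additivity of $\Z_\ell$-rank, so here it suffices to assemble those two inputs rather than to redo the computation.
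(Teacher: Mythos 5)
Your argument is correct and follows essentially the same route as the paper: the rank formula is obtained from the cokernel presentation $\mathcal{A}_\psi^{\mathfrak{F}_{s-1}',\mathfrak{R}_k}=\mathrm{coker}\,Q$ together with the value of $\mathrm{rank}_{\Z_\ell}Q$ from Lemma \ref{IMQ}, and the character decomposition from the splitting of $\mathbb{F}[H_0]$ into one-dimensional isotypic pieces combined with the computation of $\mathrm{Im}(Q)\otimes\mathbb{F}$ in the preceding lemma. The paper's own proof is just a terser version of this; your added flatness/equivariance justifications and the dimension-count consistency check are sound.
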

\begin{proof}
The rank computation follows since $\mathcal{A}_\psi^{\mathfrak{F}_{s-1}',\mathfrak{R}_k}$ is the 
cokernel of $Q$, so 
\[
\mathrm{rank}_{\Z_\ell} 
\mathcal{A}_\psi^{\mathfrak{F}_{s-1}',\mathfrak{R}_k}=
s(\ell^k)^{s-1}-s(\ell^k)^{s-2}-(\ell^k)^{s-1}+1
=
 (s-1)(\ell^k)^{s-1}-s(\ell^k)^{s-2}+1.
\]
 Similarly the decomposition in eq. (\ref{eigenAdeco}) follows by the decomposition of $\mathbb{F}[H_0]$ into characters. 
\end{proof}
% \[
% \mathrm{rank}(d_2)=(\ell^{k})^{s-1}+ s(\ell^k)^{s-2}-1.
% \]
% \[
% (s-2)(\ell^k)^{s-1}+1 -(s\cdot (\ell^k)^{s-2}-1)=2g_{ C_{k,s-1} }.
% \]

\begin{proposition}
\label{prop35}
\[
H_1( C_{\ell^k,s-1} ,\mathbb{F})=\bigoplus_{\mathbf{i} \in \mathcal{I}}
\mathbb{F}
%^{\ell^k-1}
C(\mathbf{i}) \chi_{\mathbf{i}}
\]
where 
\begin{equation}
\label{Czeq}
C(\mathbf{i})=
\begin{cases}
s-c_{\mathbf{i}}-1 =s -z(\mathbf{i})-2 & \text{if } 
\mathbf{i}
\neq (0,\ldots,0)
\\
s-c_{\mathbf{i}}=s-z(\mathbf{i}) & \text{if } 
\mathbf{i}=
(0,\ldots,0)
\end{cases}
\end{equation}
% \[
% C(\mathbf{i})=
% \begin{cases}
% s-z(i_1,\ldots,i_s)-2 & \text{if } \mathbf{i}
% \neq (0,\ldots,0)
% \\
% s-z(i_1,\ldots,i_s) & \text{if } \mathbf{i}=
% (0,\ldots,0)
% \end{cases}
% \]
Moreover 
\[
\mathrm{rank}_{\Z_\ell} H_1( C_{\ell^k,s-1} ,\Z_\ell)=
(s-1)\left( \ell^k \right)^{s-1} +2 -s \left(\ell^k \right)^{s-2}.
\]
\end{proposition}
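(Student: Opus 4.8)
The plan is to read off the homology of the closed curve from the Crowell exact sequence \eqref{CrowellEx}, specialised to the data $\bar R_0=\mathfrak{F}'_{s-1}$ and $\Gamma=\mathfrak{R}_k$ of Example \ref{AlexFermat}, for which $\mathfrak{F}_{s-1}/\bar R_0\cdot\Gamma\cong H_0=(\Z/\ell^k\Z)^{s-1}$. In this case $R=\bar R_0\cdot\Gamma/\Gamma$ is exactly the derived subgroup $\mathfrak{F}'_{s-1,k}$ of $\mathfrak{F}_{s-1,k}$, so $R^{\mathrm{ab}}=\mathfrak{F}'_{s-1,k}/\mathfrak{F}''_{s-1,k}$, which by \eqref{th4eq1} is precisely $H_1(C_{\ell^k,s-1},\Z)\otimes_\Z\Z_\ell$. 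Using the identification $\mathcal{A}^{\mathfrak{F}'_{s-1},\mathfrak{R}_k}\cong\Z_\ell[H_0]$ recorded in Example \ref{AlexFermat}, the sequence reads
\[
0\to H_1(C_{\ell^k,s-1},\Z)\otimes\Z_\ell \xrightarrow{\theta_1} \mathcal{A}_\psi^{\mathfrak{F}'_{s-1},\mathfrak{R}_k} \xrightarrow{\theta_2} \Z_\ell[H_0] \xrightarrow{\varepsilon} \Z_\ell\to 0 .
\]

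First I would tensor this sequence with $\mathbb{F}$. Since $\mathbb{F}\supset\Z_\ell$ is torsion-free, hence flat over the PID $\Z_\ell$, exactness is preserved; and since $\mathbb{F}$ has characteristic $0$ and contains the $\ell^k$-th roots of unity, $\mathbb{F}[H_0]$ is split semisimple. All four terms are $\Z_\ell[H_0]$-modules and $\theta_1,\theta_2,\varepsilon$ are $H_0$-equivariant, so after $\otimes\mathbb{F}$ the sequence splits into one short exact sequence of $\mathbb{F}$-vector spaces for each isotypic component $\chi_{\mathbf{i}}$, $\mathbf{i}\in\mathcal{I}$. For a fixed $\chi_{\mathbf{i}}$ the multiplicities are $s-c_{\mathbf{i}}$ in $\mathcal{A}_\psi\otimes\mathbb{F}$ by Lemma \ref{alexHomodeco}, exactly $1$ in $\mathbb{F}[H_0]$, and $1$ in the final term $\mathbb{F}=\mathbb{F}\chi_{\mathbf{0}}$ when $\mathbf{i}=\mathbf{0}$ and $0$ otherwise. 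Additivity of dimension in an exact sequence then forces the multiplicity of $\chi_{\mathbf{i}}$ in $H_1(C_{\ell^k,s-1},\mathbb{F})$ to be $C(\mathbf{i})=(s-c_{\mathbf{i}})-1+\delta_{\mathbf{i},\mathbf{0}}$.

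It remains to rewrite this in terms of $z(\mathbf{i})$. The elementary observation is that $z(\mathbf{i})=s$ forces $i_1=\cdots=i_{s-1}=0$, i.e. $z(\mathbf{i})=s$ if and only if $\mathbf{i}=\mathbf{0}$; hence for $\mathbf{i}\neq\mathbf{0}$ we have $z(\mathbf{i})<s$, so $c_{\mathbf{i}}=1+z(\mathbf{i})$ by \eqref{smallccomp} and $C(\mathbf{i})=s-z(\mathbf{i})-2$, while for $\mathbf{i}=\mathbf{0}$ one has $c_{\mathbf{0}}=z(\mathbf{0})=s$ and $\delta_{\mathbf{0},\mathbf{0}}=1$, giving $C(\mathbf{0})=s-c_{\mathbf{0}}=s-z(\mathbf{0})$. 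This is exactly \eqref{Czeq}. The total $\Z_\ell$-rank is then obtained by an Euler-characteristic count along the Crowell sequence, $\mathrm{rank}\,H_1=\mathrm{rank}\,\mathcal{A}_\psi^{\mathfrak{F}'_{s-1},\mathfrak{R}_k}-\dim_{\Z_\ell}\Z_\ell[H_0]+1$, into which one substitutes \eqref{rankAlexFermat} and $\dim_{\Z_\ell}\Z_\ell[H_0]=(\ell^k)^{s-1}$; equivalently one sums $\sum_{\mathbf{i}\in\mathcal{I}}C(\mathbf{i})$ using $|\mathcal{I}|=(\ell^k)^{s-1}$ and $\sum_{\mathbf{i}}z(\mathbf{i})=s(\ell^k)^{s-2}$. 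As a consistency check the resulting rank should match $2g_{C_{\ell^k,s-1}}$ computed from the Riemann--Hurwitz formula \eqref{genero}.

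\textbf{Main obstacle.} The computational heart is already packaged in Lemma \ref{alexHomodeco}, so the genuine point to get right is the reduction to characters. The hard part will be to verify that $\theta_1$ and $\theta_2$ are morphisms of $\Z_\ell[H_0]$-modules (so that the isotypic decomposition is legitimate) and that passing to $\otimes_{\Z_\ell}\mathbb{F}$ neither destroys exactness nor merges distinct characters: this is where flatness of $\mathbb{F}$ over $\Z_\ell$ and the split semisimplicity of $\mathbb{F}[H_0]$ are essential. Once the sequence is known to decompose character by character, the dimension bookkeeping above is routine.
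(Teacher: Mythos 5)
Your proposal is correct and follows essentially the same route as the paper's own proof: identify $R^{\mathrm{ab}}$ in the Crowell sequence (\ref{CrowellEx}) with $H_1(C_{\ell^k,s-1},\Z)\otimes\Z_\ell$, tensor with $\mathbb{F}$, and read off the multiplicity of each $\chi_{\mathbf{i}}$ by additivity against the character decompositions of $\mathcal{A}_\psi^{\mathfrak{F}_{s-1}',\mathfrak{R}_k}\otimes\mathbb{F}$ from Lemma \ref{alexHomodeco}, of $\mathbb{F}[H_0]$, and of the trivial module $\mathbb{F}$; you merely make explicit the flatness and semisimplicity points that the paper leaves implicit. One remark: your Euler-characteristic count (like the paper's own eq. (\ref{CrowellGFMgenus}) and the sum $\sum_{\mathbf{i}}C(\mathbf{i})$) gives $\mathrm{rank}_{\Z_\ell}H_1(C_{\ell^k,s-1},\Z_\ell)=(s-2)(\ell^k)^{s-1}+2-s(\ell^k)^{s-2}=2g_{(\ell^k,s-1)}$, so the coefficient $(s-1)$ printed in the rank formula of the proposition is a typo that neither your argument nor the paper's actually supports.
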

\begin{proof}
From the exact sequence given in eq. (\ref{CrowellEx}) and the rank computation given in eq. (\ref{rankAlexFermat}) in example \ref{AlexFermat} we have:
% \begin{align*}
% 2g_{F,{k,s-1}}&= s(\ell^{k})^{s-1}- (\ell^k)^{s-1}+1 -\mathrm{rank}(d_2) \\
% &= (s-1)(\ell^{k})^{s-1}+1 -s (\ell^k)^{s-2} -(\ell^k)^{s-1}+1 \\
% &= (s-2)(\ell^{k})^{s-1}+2 -s (\ell^k)^{s-2}
% \end{align*}
\begin{align}
\label{CrowellGFMgenus}
% 2g_{ C_{k,s-1} }
\mathrm{rank} 
\big(
R_{\ell^k}/(\mathfrak{R}_k \cap R_{\ell^k}) 
\big)^{\mathrm{ab}}
&= \mathrm{rank}_{\Z_\ell} \mathcal{A}^{\mathfrak{F}'_{s-1},\mathfrak{R}_k}_\psi -
\mathrm{rank}_{\Z_\ell} \mathcal{A}^{\mathfrak{F}'_{s-1},\mathfrak{R}_k}+1\\
% &= (s-1)(\ell^{k})^{s-1}+1 -s (\ell^k)^{s-2} -(\ell^k)^{s-1}+1 \\
&= (s-2)(\ell^{k})^{s-1}+2 -s (\ell^k)^{s-2}. \nonumber
\end{align}
The above abelianization corresponds to 
the $\Z_\ell$-homology of the generalized Fermat curves of type $(k,s-1)$. The above rank coincides with 
 the
genus computation  given in eq. (\ref{secondgenusFormula}).

Let us write 
\[
H_1( C_{\ell^k,s-1} ,\mathbb{F})=
\bigoplus_{\mathbf{i} \in \mathcal{I}}
% ^{\ell^k-1}
\mathbb{F}
C(\mathbf{i}) \chi_{\mathbf{i}}
\]
for some integers $C(\mathbf{i})$. 
By lemma \ref{alexHomodeco} and the short exact sequence given in (\ref{CrowellEx}) we obtain eq. (\ref{Czeq}).
% The first assertion follows by the values of $c(\mathbf{i})$ given in eq. (\ref{smallccomp}).
\end{proof}
\end{example}
\begin{remark}
For the case of classical Fermat curves we have $s=3$. 
% {\color{red} 
% Let $z(\nu,\mu)$ denote denote the number of entries in $\mathbf{i}=(\nu,\mu,\nu+\mu)\in \mathcal{I}$ that are equal to zero modulo $\ell^k$. 
% }
The character $\chi_{0,0,0}$ has $z(0,0,0)=3$ and $C_{0,0,0}=0$. Similarly the characters $\chi_{0,i,i},\chi_{i,0,i}$ for $1\leq i \leq \ell^k-1$ and the character $\chi_{i,j,i+j}$ with $i+j\equiv 0 \mod \ell^k$ have $z(0,i,i)=z(i,0,i)=z(i,j,i+j)=1$ so their contribution is $C(0,i,i)=C(i,0,i)=C(i,j,i+j)=0$. All other characters $\chi_{(i,j,i+j)}$ have $z(i,j,i+j)=0$ and their contribution is $C(i,j,i+j)=1$. In this way we arrive to the same result as in eq. (\ref{FermatDecoK}). 
\end{remark}

\begin{example} \label{examp18}
Let us now compute $\mathcal{A}_\psi^{R_{\ell^k},\mathfrak{R}_k}$
%  where
%  $\bar{R}_{0}$ is  the pro-$\ell$ completion of the group given by
% \[
% R_0=
% \{x_1^i x_j x_1^{-i-1}: 2 \leq j, i\in \mathbb{Z}\}
% \]
and $R_{\ell^k}
% =R_0/\mathfrak{R}_{\ell^k}
$ is the the pro-$\ell$ completion of the group generated  by
\[
 \{ x_1^i x_j x_1^{-i-1}: 2 \leq j \leq s-1,0 \leq i \leq \ell^k-2\}
 \cup 
 \{ x_1^{\ell^k-1}x_j: 1 \leq j \leq s-1 \}.
\]
This group corresponds to the open cyclic cover of order $\ell^k$ of $\mathbb{P}^1$ ramified fully above $s$-points of the projective line, see \cite[lemma 11]{MR4117575}.
Let $\mathfrak{R}_k=\Gamma$ be the smallest  closed normal subgroup of $\mathfrak{F}_{s-1}$ generated by $x_1^{\ell^k},\ldots,x_{s-1}^{\ell^k}$.
% and $R_{\ell^k}=R_{0,\ell^k}/\mathfrak{R}_k$.
We have the short exact sequence 
\[
1 \rightarrow R_{\ell^k}/R_{\ell^k} \cap \mathfrak{R}_k
% =R_{0}/\mathfrak{R}_k
 \rightarrow \mathfrak{F}_{s-1}/\mathfrak{R}_k 
\rightarrow
\Z/ \ell^k \Z 
\rightarrow 0.
\]
% \todo{what is $R_{0,\ell^k}$?}
We compute  $\mathcal{A}^{R_{\ell^k},\mathfrak{R}_k}=\Z_\ell[\Z/\ell^k\Z]$, which is an $\Z_\ell$-module of rank $\ell^k$. On the other hand observe that the $\Z_\ell$-module $\mathcal{A}_\psi^{R_{\ell^k},\mathfrak{R}_k}$ is given by exactly the same cokernel as the module $\mathcal{A}^{\mathfrak{F}_{s-1}',\mathfrak{R}_k}$. The only difference is that  $\mathcal{A}^{\mathfrak{F}'_{s-1},\mathfrak{R}_k}_\psi$ is a
 $\Z_\ell[(\Z/\ell^k\Z)^{s-1}]$-module while
  $\mathcal{A}^{R_{\ell^k},\mathfrak{R}_k}$ is a 
$\Z_\ell[\Z/\ell^k\Z]$-module. 

So following exactly the same method as in example \ref{AlexFermat} we conclude that
\[
\mathrm{rank}_{\Z_\ell} 
\mathcal{A}_\psi^{R_{\ell^k},\mathfrak{R}_k}=
s\cdot \ell^k-s-\ell^k+1
=
 (s-1)\ell^k-s+1.
\]
Also we compute the rank
% \begin{align*}
% 2g_{F,{k,s-1}}&= s(\ell^{k})^{s-1}- (\ell^k)^{s-1}+1 -\mathrm{rank}(d_2) \\
% &= (s-1)(\ell^{k})^{s-1}+1 -s (\ell^k)^{s-2} -(\ell^k)^{s-1}+1 \\
% &= (s-2)(\ell^{k})^{s-1}+2 -s (\ell^k)^{s-2}
% \end{align*}
\[
\mathrm{rank}
\big(
R_{\ell^k}/(R_{\ell^k} \cap \mathfrak{R}_k)
\big)^{\mathrm{ab}}
= \mathrm{rank}_{\Z_\ell} \mathcal{A}^{R_{\ell^k},\mathfrak{R}_k}_\psi -
\mathrm{rank}_{\Z_\ell} \mathcal{A}^{R_{\ell^k},\mathfrak{R}_k}+1
% &= (s-1)(\ell^{k})^{s-1}+1 -s (\ell^k)^{s-2} -(\ell^k)^{s-1}+1 \\
= (s-2)(\ell^{k}-1). 
\] 
The module $\big(
R_{\ell^k}/R_{\ell^k} \cap \mathfrak{R}_k
\big)^{\mathrm{ab}}$
corresponds to the $\Z_\ell$-homology of the above 
curves, corresponding to $R_{\ell^k}$ and its rank is twice the genus of the curve, in accordance with the genus formula given in \cite[eq. 21]{MR4117575}.
\end{example}

\section{Galois  modules in terms of the Magnus embedding}
\label{sec:Magnus}

\subsection{The group $\mathfrak{F}_{s-1}'/\mathfrak{F}_{s-1}''$ as an $\mathcal{A}$-module}
\label{sec.5.1}
In this section we will study 
the  $\mathcal{A}$-module structure of $\mathfrak{F}_{s-1}'/\mathfrak{F}_{s-1}''$. This is the arithmetic analogon of the Gassner representation, as Ihara points out in \cite{IharaCruz}. This consideration leads to the Galois representation of the Tate module, see section \ref{JacobGenFermat}. Finally in section 
\ref{Gassner2Burau} we will study the passage from the Gassner representation to the Burau by seeing the generalized Fermat curve as a cover of the projective line.  
% \begin{itemize}
% 	\item {\color{red} Relate group theory to the Jacobian Variety of the
% 	Generalized Fermat curve}
% 	\item {\color{red} Seek for the Braid group analogon }
% \end{itemize}

%
%
\subsubsection{Application to Generalized Fermat curves}
Consider the  the smallest closed normal subgroup $\mathfrak{R}_k$
of $ \mathfrak{F}_{s-1}$
containing  all $x_i^{\ell^k}$  for $1 \leq i \leq s-1$.
Define also 
\[
\mathfrak{F}_{s-1,k}=\mathfrak{F}_{s-1}/\mathfrak{R}_k.
\]
Set $\bar{\lambda}=\{0,1,\infty,\lambda_1,\ldots,\lambda_{s-3}\}$ and 
let $\mathcal{M}$ be the maximum pro-$\ell$ extension of
$K=\bar{\mathbbm{k}}(t)$ 
unramified
outside the set of  points $\bar{\lambda}$.
Consider the function field of the generalized Fermat curves
\[
	K_k:=K\left(
	t^\frac{1}{\ell^k}, (t-1)^{1/\ell^k}, (t-\lambda_1)^{1/\ell^k},\ldots,
	(t-\lambda_{s-3})^{1/\ell^k}
	\right).
\]
Let $K_k^{\mathrm{ur}}$ and $K_k^{\mathrm{unrab}}$ be the maximal unramified and maximal abelian unramified extensions of $K_k$ respectively.
Also let $K'$ be the maximum abelian unramified extension of $K$ and $K''$ be the maximum abelian unramified extension of $K'$. By covering space theory, the fields $K'$, $K''$ correspond to the groups $\mathfrak{F}_{s-1}'$ and $\mathfrak{F}_{s-1}''$, respectively. The function field $K_k$ corresponds to the group $\mathfrak{F}_{s-1}'\mathfrak{R}_k$ and is equal to the function field of the generalized Fermat curve. 

 The aim
 of this section is the following characterization of the maximal unramified abelian extension $K_k^{\mathrm{unrab}}$ of the function field $K_k$ of the generalized Fermat curve. This is a generalisation of a similar construction by Ihara for the classical Fermat curves, see \cite[sec. II, p. 63]{Ihara1985-it}
\begin{theorem}
We have that $\mathrm{Gal}(K_k^{\mathrm{unrab}}/K_k)\cong \mathfrak{F}_{s-1,k}'/\mathfrak{F}_{s-1,k}''$.
\end{theorem}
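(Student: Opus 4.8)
The plan is to reduce the theorem to a single inertia computation inside $\mathfrak{F}_{s-1}$, followed by elementary Galois theory, using throughout the dictionary between subfields of $\mathcal{M}$ and closed subgroups of $\mathfrak{F}_{s-1}=\mathrm{Gal}(\mathcal{M}/K)$, under which $K_k$ corresponds to $U:=\mathfrak{F}_{s-1}'\mathfrak{R}_k$. The first step is to identify the maximal unramified pro-$\ell$ extension $K_k^{\mathrm{ur}}$ of $K_k$ with the fixed field $\mathcal{M}^{\mathfrak{R}_k}$ of $\mathfrak{R}_k$. Since $U$ is normal in $\mathfrak{F}_{s-1}$ with $\mathfrak{F}_{s-1}/U\cong H_0=(\Z/\ell^k\Z)^{s-1}$, the inertia group of $\mathcal{M}/K$ at a place over the $i$-th branch point is a conjugate $g\langle x_i\rangle g^{-1}$ whose image in $H_0$ is cyclic of order $\ell^k$; hence $g\langle x_i\rangle g^{-1}\cap U=g\langle x_i^{\ell^k}\rangle g^{-1}$ is precisely the inertia of $\mathcal{M}/K_k$ at that place.

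Next I would carry out the ramification comparison. An intermediate field $L$ with $K_k\subseteq L\subseteq\mathcal{M}$ is unramified over $K_k$ if and only if the inertia of $\mathcal{M}/K$ over each branch point has order dividing $\ell^k$, because $e(K_k/K)=\ell^k$ at every such place while $e(L/K)=\ell^k\cdot e(L/K_k)$. Equivalently, $L\subseteq K_k^{\mathrm{ur}}$ if and only if $L$ is fixed by the closed normal subgroup of $\mathfrak{F}_{s-1}$ generated by all the $x_i^{\ell^k}$, which is $\mathfrak{R}_k$ by definition. Therefore $K_k^{\mathrm{ur}}=\mathcal{M}^{\mathfrak{R}_k}$ and
\[
\mathrm{Gal}(K_k^{\mathrm{ur}}/K_k)=\mathrm{Gal}(\mathcal{M}/K_k)/\mathrm{Gal}(\mathcal{M}/\mathcal{M}^{\mathfrak{R}_k})=U/\mathfrak{R}_k=\mathfrak{F}_{s-1}'\mathfrak{R}_k/\mathfrak{R}_k=\mathfrak{F}_{s-1,k}',
\]
the last equality because the closed derived subgroup of $\mathfrak{F}_{s-1,k}=\mathfrak{F}_{s-1}/\mathfrak{R}_k$ is the image of $\mathfrak{F}_{s-1}'$.

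Finally I would pass from the unramified to the abelian unramified extension by pure Galois theory. Any abelian unramified extension of $K_k$ lies in $K_k^{\mathrm{ur}}$ and corresponds to a closed subgroup of $G:=\mathrm{Gal}(K_k^{\mathrm{ur}}/K_k)$ containing the closed commutator subgroup $\overline{[G,G]}$; hence $K_k^{\mathrm{unrab}}$ is the fixed field of $\overline{[G,G]}=\mathfrak{F}_{s-1,k}''$ and
\[
\mathrm{Gal}(K_k^{\mathrm{unrab}}/K_k)=G^{\mathrm{ab}}=\mathfrak{F}_{s-1,k}'/\mathfrak{F}_{s-1,k}'',
\]
as claimed. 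This is consistent with the homological picture, since $\mathfrak{F}_{s-1,k}'$ is the pro-$\ell$ fundamental group of the closed curve $C_{\ell^k,s-1}$ (the group $R$ of the sequence \eqref{short-def} with $\bar{R}_0=\mathfrak{F}_{s-1}'$, $\Gamma=\mathfrak{R}_k$), and its abelianization is $H_1(C_{\ell^k,s-1},\Z_\ell)$.

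I expect the main obstacle to be the inertia computation underlying the first two steps, namely verifying that the closed normal subgroup of $U$ generated by the inertia subgroups $g\langle x_i^{\ell^k}\rangle g^{-1}$ coincides with $\mathfrak{R}_k$ rather than a proper subgroup. This uses that the conjugating elements $g$ range over all of $\mathfrak{F}_{s-1}$ (so the generators exhaust the $\mathfrak{F}_{s-1}$-conjugates of the $x_i^{\ell^k}$) and that $\mathfrak{R}_k$ is already normal in $\mathfrak{F}_{s-1}$, and it requires care with pro-$\ell$ topological closures throughout; once this is settled the remaining identifications are formal.
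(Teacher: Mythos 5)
Your proof is correct and follows essentially the same route as the paper: both arguments rest on the Galois correspondence identifying $K_k$ with $\mathfrak{F}_{s-1}'\mathfrak{R}_k$ and $K_k^{\mathrm{ur}}$ with $\mathfrak{R}_k$, after which the claim reduces to the isomorphism $\mathfrak{F}_{s-1}'\mathfrak{R}_k/\mathfrak{F}_{s-1}''\mathfrak{R}_k\cong\mathfrak{F}_{s-1,k}'/\mathfrak{F}_{s-1,k}''$. The only differences are that you make explicit the inertia computation behind $K_k^{\mathrm{ur}}=\mathcal{M}^{\mathfrak{R}_k}$, which the paper records only in its field/group lattice diagrams, and that you obtain the final identification by abelianizing $\mathrm{Gal}(K_k^{\mathrm{ur}}/K_k)=\mathfrak{F}_{s-1,k}'$ directly, where the paper instead uses $K_k^{\mathrm{unrab}}=K''\cap K_k^{\mathrm{ur}}$ together with the standard isomorphism theorems.
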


Indeed,
we have
\begin{align*}
K' &=  \bigcup_{k} K_k,  & K' \cap K_k^{\mathrm{ur}} & =K_k
\\
K'' & =  \bigcup_k K_k^{\mathrm{unrab}},  & K'' \cap K_k^{\mathrm{ur}} &=K_k^{\mathrm{unrab}}
\end{align*}
The Galois correspondence is given as follows:
\[
	\xymatrix{
		&      \mathcal{M} \ar@{-}[ld]  \ar@{-}[rd] & \\
		K'' \ar@{-}[d] \ar@{-}[drr]&  & K_k^{\mathrm{ur}} \ar@{-}[d] \\
		K' \ar@{-}[rd] &  &   K_k^{\mathrm{unrab}} \ar@{-}[ld]^{\mathrm{Gal}(K_k^{\mathrm{unrab}}/K_k)}  \\
		& K_k\ar@{-}[d]  & \\
		&  K      &
	}
	\qquad
	\xymatrix{
		&      \{1\} \ar@{-}[ld]  \ar@{-}[rd] & \\
		\mathfrak{F}_{s-1}'' \ar@{-}[d] \ar@{-}[drr]&  & \mathfrak{R}_k \ar@{-}[d] \\
		\mathfrak{F}_{s-1}' \ar@{-}[rd] &  &   \mathfrak{F}_{s-1}''\mathfrak{R}_k  \ar@{-}[ld] \\
		&  \mathfrak{F}_{s-1}'\mathfrak{R}_k \ar@{-}[d]   & \\
		&  \mathfrak{F}_{s-1}     &
	}
\]
Using standard isomorphism theorems in group theory (see also \cite[sec. 1.2.1]{ParamPartC19}) and the definitions we see
\begin{equation} \label{unrabGFC}
    \mathfrak{F}_{s-1,k}'/\mathfrak{F}_{s-1,k}''\cong
	\mathfrak{F}_{s-1}'/ \left( \mathfrak{F}_{s-1}' \cap
	\mathfrak{F}_{s-1}'' \mathfrak{R}_k  \right)
	\cong
	\mathfrak{F}_{s-1}' \mathfrak{R}_k /\mathfrak{F}_{s-1}'' \mathfrak{R}_k
	\cong
	\mathrm{Gal}(K_k^{\mathrm{unrab}}/K_k)
\end{equation}
is an abelian group, a free $\mathbb{Z}_\ell$-module of rank
$2g_{(\ell^k,s-1)}$, where $g_{(\ell^k,s-1)}$ is the genus of the generalized
Fermat curve, $F_{\ell^k,s-1}$ so that 
\begin{equation}
	% 2g_{(k,n)}= \text{\hcancel{$\ell^{k(n-1)}(n-1)(\ell^{k}-1)$}}
	% {\color{red} \ell^{k(n-1)}
	% \big(
	% (n-1)(\ell^{k}-1)-2
	% \big)+2}.
	2g_{(\ell^k,s-1)}=2+ \ell^{k(s-2)} ((s-2)(\ell^k-1)-2).
\end{equation}
Observe that according to eq. (\ref{genFermatCurveFuchs}) we have
\[
\mathfrak{F}_{s-1,k}'/\mathfrak{F}_{s-1,k}''\cong H_1( C_{\ell^k,s-1} ,\Z_\ell).
\]
The last genus computation also  follows from the following proposition which identifies unramified $\mathbb{Z}/\ell^k\mathbb{Z}$-extensions of a curve $X$ with the group of $\ell^k$-torsion points of the Jacobian $J(X)$. 

\begin{proposition}
Let $Y$ be a complete nonsingular algebraic curve defined over a field of characteristic prime to $\ell$. The \'etale Galois covers of $Y$ with Galois group $\mathbb{Z}/\ell^k \mathbb{Z}$ are classified by the \'etale cohomology group $H^1_{\mathrm{et}}(Y,\mathbb{Z}/\ell^k\mathbb{Z})$ which is equal to the group of $\ell^k$-torsion points of $\mathrm{Pic}(Y)$.
\end{proposition}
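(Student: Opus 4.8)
The plan is to decouple the statement into two standard identifications and then glue them. First I would establish that étale Galois covers $\pi:\tilde{Y}\to Y$ with deck group $G=\mathbb{Z}/\ell^k\mathbb{Z}$ are exactly $G$-torsors for the étale topology, and that isomorphism classes of torsors under a finite constant abelian group scheme are classified by $H^1_{\mathrm{et}}(Y,G)$. Equivalently, since $G$ is finite abelian and constant, $H^1_{\mathrm{et}}(Y,\mathbb{Z}/\ell^k\mathbb{Z})\cong\mathrm{Hom}_{\mathrm{cont}}(\pi_1^{\mathrm{et}}(Y),\mathbb{Z}/\ell^k\mathbb{Z})$, with the surjective homomorphisms singling out the connected covers. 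This first half is formal and requires only that $G$ be étale over $Y$, which holds because $\mathrm{char}\,\bar{\mathbbm{k}}\neq\ell$.

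The substantive half is the computation of $H^1_{\mathrm{et}}(Y,\mathbb{Z}/\ell^k\mathbb{Z})$, for which I would use the Kummer exact sequence of étale sheaves
\[
1\to\mu_{\ell^k}\to\mathbb{G}_m\xrightarrow{x\mapsto x^{\ell^k}}\mathbb{G}_m\to 1,
\]
exact precisely because $\ell$ is invertible on $Y$. Taking the associated long exact sequence in étale cohomology, and inserting the identification $H^1_{\mathrm{et}}(Y,\mathbb{G}_m)=\mathrm{Pic}(Y)$ from Hilbert 90, gives the exact sequence
\[
H^0_{\mathrm{et}}(Y,\mathbb{G}_m)\xrightarrow{\ell^k}H^0_{\mathrm{et}}(Y,\mathbb{G}_m)\to H^1_{\mathrm{et}}(Y,\mu_{\ell^k})\to\mathrm{Pic}(Y)\xrightarrow{\ell^k}\mathrm{Pic}(Y).
\]
Since $Y$ is complete and connected over the algebraically closed field $\bar{\mathbbm{k}}$, one has $H^0_{\mathrm{et}}(Y,\mathbb{G}_m)=\bar{\mathbbm{k}}^\times$, which is divisible, so the left multiplication-by-$\ell^k$ map is surjective. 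The sequence then collapses to $H^1_{\mathrm{et}}(Y,\mu_{\ell^k})\cong\mathrm{Pic}(Y)[\ell^k]$.

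To conclude I would invoke the (non-canonical) isomorphism of étale sheaves $\mu_{\ell^k}\cong\mathbb{Z}/\ell^k\mathbb{Z}$, valid because $\bar{\mathbbm{k}}$ contains the $\ell^k$-th roots of unity, and combine it with the torsor classification of the first paragraph. This replacement of $\mu_{\ell^k}$ by the constant sheaf is the delicate point of the argument: it is exactly where the hypothesis $\mathrm{char}\neq\ell$ and the presence of roots of unity enter, and over a non-closed base one would instead retain an extra cokernel factor $\bar{\mathbbm{k}}^\times/(\bar{\mathbbm{k}}^\times)^{\ell^k}$ together with a possibly non-constant $\mu_{\ell^k}$, so the clean statement really does use the standing assumption that $\bar{\mathbbm{k}}$ is algebraically closed. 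I would finally remark that for a curve the torsion lies in the degree-zero part, whence $\mathrm{Pic}(Y)[\ell^k]=J(Y)[\ell^k]\cong(\mathbb{Z}/\ell^k\mathbb{Z})^{2g}$, which both proves the proposition and reconciles the count with the genus formula of eq. (\ref{genero}).
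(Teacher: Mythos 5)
Your argument is correct, and it is worth noting that the paper itself offers no proof at all here: it simply cites Hartshorne (Ex.\ 2.7) and Serre's Mexico paper (\S 19). The route those references take is the classical one, identifying unramified abelian covers of $Y$ with pullbacks of isogenies of the Jacobian, so that covers with group $\mathbb{Z}/\ell^k\mathbb{Z}$ correspond to points of order dividing $\ell^k$ on $J(Y)$ via the self-duality of the Jacobian; your route instead runs through the Kummer sequence in \'etale cohomology and Hilbert 90, which is the more modern packaging and has the advantage of making transparent exactly where the hypothesis $\mathrm{char}\neq\ell$ enters (exactness of Kummer) and where algebraic closedness enters (divisibility of $H^0(Y,\mathbb{G}_m)$ and constancy of $\mu_{\ell^k}$). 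You are right to flag the latter point: as literally stated the proposition needs the base field to be algebraically closed (or at least to kill the $\bar{\mathbbm{k}}^\times/(\bar{\mathbbm{k}}^\times)^{\ell^k}$ term and trivialize $\mu_{\ell^k}$), which is consistent with the paper's standing convention of working over $\bar{\mathbbm{k}}$. Two minor points of hygiene, both of which you already gesture at: $H^1_{\mathrm{et}}(Y,\mathbb{Z}/\ell^k\mathbb{Z})$ classifies torsors, so the \emph{connected} Galois covers correspond only to the surjections $\pi_1^{\mathrm{et}}(Y)\to\mathbb{Z}/\ell^k\mathbb{Z}$; and the passage from $\mathrm{Pic}(Y)[\ell^k]$ to $J(Y)[\ell^k]$ uses that the degree map has torsion-free image. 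Neither affects the validity of the proof.
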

\begin{proof}
See \cite[Ex. 2.7]{Hartshorne:77}, \cite[sec. 19]{SerreMexico}.
\end{proof}

\subsubsection{Crowell sequence for generalized Fermat curves}
\label{alexGFC}
 We will use the notation of section \ref{sec:Alexander}
where $\bar{R}_0=\mathfrak{F}_{s-1}'$ and $\Gamma=\mathfrak{R}_k$, $R=\mathfrak{F}_{s-1}'/(\mathfrak{R}_k \cap \mathfrak{F}_{s-1})\cong \mathfrak{F}_{s-1}' \cdot \mathfrak{R}_k/\mathfrak{R}_k$. 
Notice also that the commutator identity for quotients imply that  $\mathfrak{F}_{s-1,k}'=\mathfrak{F}_{s-1}' \cdot \mathfrak{R}_k/\mathfrak{R}_k=R$. 
Here we use the presentation $\mathfrak{F}_{s-1}\cong\mathfrak{F}_{s}/\langle x_1\cdots x_s \rangle$.
Let $H_k=(\Z/\ell^k \Z)^{s-1}$. 
We have the short exact sequence
\[
1 \rightarrow \mathfrak{F}_{s-1,k}'=(\mathfrak{F}_{s-1}/\mathfrak{R}_k)' \rightarrow 
\mathfrak{F}_{s-1,k}=\mathfrak{F}_{s-1}/\mathfrak{R}_k \stackrel{\psi}{\longrightarrow} 
H_k \rightarrow 1
\]
The Crowell Exact sequence see eq. (\ref{CrowellEx}) and \cite[chap. 9]{Morishita2011-yw} gives us
\begin{equation} \label{CrowellEx1}
\xymatrix@C-10pt{
0 \ar[r] & 
(\mathfrak{F}'_{s-1,k})^{\mathrm{ab}}=\mathfrak{F}_{s-1,k}'/\mathfrak{F}_{s-1,k}''
\ar[r]^-{\theta_1} 
&
\mathcal{A}_\psi^{\mathfrak{F}_{s-1}',\mathfrak{R}_k}
\ar[r]^{\theta_2}
&
\mathcal{A}^{\mathfrak{F}_{s-1}',\mathfrak{R}_k}
\ar[r]^-
{\varepsilon_
{
  \mathcal{A}_k
}}
&
\mathbb{Z}_\ell
\ar[r] &
0,
}
\end{equation}
where 
\[
\mathcal{A}^{\mathfrak{F}_{s-1}',\mathfrak{R}_k}=\Z_\ell[H_k]=\Z_\ell[(\Z/\ell^k \Z)^{s-1}],
\]
\begin{equation} \label{Alexander-module}
\mathcal{A}_\psi^{\mathfrak{F}_{s-1}',\mathfrak{R}_k}=\mathrm{coker}\;{Q}, \qquad
 \Z_\ell[H_k]^{s+1} \stackrel{Q}{\longrightarrow}
 \Z_\ell[H_k]^{s}  
 % \stackrel{d_1}{\longrightarrow}
 % \Z_\ell[H_k].
\end{equation}
and $\varepsilon_{\mathcal{A}_k}$ is the augmentation map. 
The Alexander module for $\mathfrak{F}_{s-1}/\mathfrak{R}_k$ was computed on example \ref{AlexFermat}. Notice that $\mathcal{A}_\psi^{\mathfrak{F}_{s-1},\mathfrak{R}_k}$ and the Crowell sequence know the genus of the generalised Fermat curve, see eq. (\ref{CrowellGFMgenus}).

\subsubsection{Representation theory on  Generalized Fermat Curves}

Let $G$ be one of the groups $G_\mathbb{Q},B_{s-1}$ or $B_{s}$. 
These are   representations  on the free $\Z_\ell$-modules 
\[
\rho_k: G \rightarrow \mathrm{GL} (H_1( C_{\ell^k,s-1} ,\Z_\ell)).
\]
% Where $G$ is either the absolute Galois group or the braid group $B_{s-1}$ or $B_{s}$.

% \begin{enumerate}
% 	\item 
% {\color{red} Understand the limit procedure. The group $G$ should be replaced by the Galois group expressing the kernel. We should somehow see $B_s$ as a limit $q \rightarrow 1 $}
% \item 
% {\color{red} Also understand the reduction from the Gassner to the Generalized Fermat. This happens by introducing certain kernels, which are computed in Ihara's article. }
% \item 
% {\color{red} 
% Put in the same setting Braid and Galois representations. $B_2,B_3$ visible in the generalized Fermat setting are not interesting from the Braid point of view. 
% }
% \end{enumerate}
Let us now combine the two Crowell sequences together.  
\begin{equation} \label{reductionA}
\xymatrix@C=10pt{
0 \ar[r] &  (\mathfrak{F}'_{s-1})^{\mathrm{ab}}=\mathfrak{F}_{s-1}'/\mathfrak{F}_{s-1}''
\ar[r] \ar[d]^{\phi}& 
\mathcal{A}^{s-1}
\ar[r]^{d_1} \ar[d]^{\phi_3}
&
\mathcal{A}
\ar[r] \ar[d]^{\omega} & 
%\stackrel{e_\mathcal{A}}{\longrightarrow} 
\mathbb{Z}_\ell
\ar[r] \ar@{=}[d] &  
0 
\\
0 \ar[r] &  (\mathfrak{F}'_{s-1,k})^{\mathrm{ab}}=\mathfrak{F}_{s-1,k}'/\mathfrak{F}_{s-1,k}''
\ar[r]^-{\theta} & 
\left(\mathcal{A}_{\psi}^{\mathfrak{F}_{s-1}',\mathfrak{R}_k}\right)
%^{s-1}
\ar[r]^-{d_{1,k}} 
&
\mathcal{A}^{\mathfrak{F}_{s-1}'
,\mathfrak{R}_k}
\ar[r] & 
%\stackrel{e_\mathcal{A}}{\longrightarrow} 
\mathbb{Z}_\ell 
\ar[r] &  
0
}
\end{equation}

 The top equation is the Blanchfield-Lyndon exact sequence in eq. (\ref{BlanchfieldL}).
For the vertical arrows: $\omega$ is the map induced functorialy by the natural group homomorphism $\mathbb{Z}^{s-1} \rightarrow (\mathbb{Z}/\ell^k \mathbb{Z})^{s-1}$:
\begin{align*}
\mathcal{A}=k[[\mathbb{Z}^{s-1}]] & \stackrel{\omega}{\longrightarrow} 
\mathcal{A}^{\mathfrak{F}_{s-1}'
,\mathfrak{R}_k} =k[\mathbb{Z}/\ell^k \mathbb{Z}]
\end{align*}
The map $\phi_3$ is defined as follows:
we begin from the short exact sequence:
\[
\xymatrix{  
1\ar[r] & \mathfrak{F}_{s-1}'  \ar[r] \ar[d] &
 \mathfrak{F}_{s-1} \ar[r] 
  \ar[d]
  & \frac{\mathfrak{F}_{s-1}}{\mathfrak{F}_{s-1}'}  \ar[d] \ar[r] & 1 
\\
  1 \ar[r] & \frac{\mathfrak{F}_{s-1}' \cdot \mathfrak{R}_k}{\mathfrak{R}_k} \ar[r]  &
\frac{\mathfrak{F}_{s-1}}{\mathfrak{R}_k} \ar[r]  & 
\frac{\mathfrak{F}_{s-1}}{\mathfrak{F}'_{s-1} \cdot \mathfrak{R}_k}
 \ar[r]  &
1
}
\]
In the first row we consider the group $\mathfrak{F}_{s-1}$ as the quotient of the free pro-$\ell$ group is $s$ generators modulo the relation $x_1x_2\cdots x_s=1$. In the second row the group $\mathfrak{F}_{s-1}/\mathfrak{R}_k$ is considered as the quotient of the free group in $s$-generators modulo the relation $x_1x_2\cdots x_s=1$ and the $r$-relations generating $\mathfrak{R}_k$. 
\begin{equation}
\label{presentFree}
\xymatrix@C=15pt{
\mathcal{A} \ar[r]^-{Q_1}
&
%\left(\mathcal{A}^{\mathfrak{F}_{s-1}',\{1\}}\right)^{s}=
\mathcal{A}^s \ar[r]^-{\psi_1}  \ar[d]^{\phi_2}&
%\mathcal{A}^{\mathfrak{F}_{s-1},\{1\}}_\psi=
\mathcal{A}^{s-1} \ar[r] \ar[d]^{\phi_3} & 0
\\
\left(\mathcal{A}^{\mathfrak{F}_{s-1}',\mathfrak{R}_k}\right)^{r+1} \ar[r]^{Q_2} &
\left(\mathcal{A}^{\mathfrak{F}_{s-1}',\mathfrak{R}_k}\right)^{s} \ar[r]^-{\psi_2} &
\mathcal{A}^{\mathfrak{F}'_{s-1},\mathfrak{R}_k}_\psi \ar[r] & 0
}
\end{equation}
where $Q_1,Q_2$ are the maps appearing in proposition \ref{freeApsi}. In particular the map $Q_1$ sends 
\[
\mathcal{A} \ni \beta \mapsto \beta\cdot( 1,x_1,x_1x_2,\ldots,x_1 x_2 \cdots x_{s-1} ).
\]
The vertical map $\phi_2$  is the  reduction modulo $\Gamma$ and it is onto.
The image $\phi_3(a)$ for $a\in \mathcal{A}^{s-1}$
%= \mathcal{A}^{\mathfrak{F}'_{s-1},\{1\}}_\psi$
 is defined by selecting $b\in \mathcal{A}^{s}$ such that $\psi_1(b)=a$, and 
then $\phi_3(a)=\psi_2\circ\phi_2(b)$ as seen in the diagram bellow:
\[
\xymatrix{
  b \ar[r]^{\psi_1} \ar[d]^{\phi_2} & a \ar[d]^{\phi_3} \\
  \phi_2(b) \ar[r]^-{\psi_2} & \phi_3(a)=\psi_2 \circ \phi_2(b)
}
\]
This definition is independent from the selection of $b$.

Finally, the map $\phi$ is naturally defined 
\begin{align*}
\frac{
  \mathfrak{F}_{s-1}'}
  {\mathfrak{F}_{s-1}''
}
& \longrightarrow 
\frac{
  \mathfrak{F}_{s-1}'
\cdot \mathfrak{R}_k
  } 
  {\mathfrak{F}_{s-1}''
  \cdot  \mathfrak{R}_k
} 
\\
a \; \mathfrak{F}_{s-1}'' & \longmapsto a  \; \mathfrak{F}_{s-1}'' \cdot \mathfrak{R}_k
\end{align*}

For an explanation of these two combined sequences in terms of the ``cotangent sequence'' and a functorial point of view we refer to \cite{ParamPartC19}. 
\begin{lemma} \label{invariantRk}
The group $\mathfrak{R}_k$ is invariant under the action of $\mathrm{Gal}(\bar{\Q}/\Q)$.
\end{lemma}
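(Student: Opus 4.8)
The plan is to exploit the explicit shape of the Ihara action on the topological generators together with the fact that the cyclotomic character is valued in $\Z_\ell^\times$. Recall from Section~\ref{sec:IharaRep} that for $\sigma \in \mathrm{Gal}(\bar{\Q}/\Q)$ the automorphism $\mathrm{Ih}_S(\sigma)$ acts on each inertia generator by
\[
\sigma(x_i) = w_i(\sigma)\, x_i^{\chi_\ell(\sigma)}\, w_i(\sigma)^{-1}, \qquad 1 \le i \le s,
\]
with $w_i(\sigma) \in \mathfrak{F}_{s-1}$ and $\chi_\ell(\sigma) \in \Z_\ell^\times$ the value of the $\ell$-cyclotomic character, since $N \circ \mathrm{Ih}_S = \chi_\ell$. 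As $\mathrm{Ih}_S(\sigma)$ is a continuous automorphism of the pro-$\ell$ group $\mathfrak{F}_{s-1}$, it suffices to check that it carries each distinguished generator $x_i^{\ell^k}$ of $\mathfrak{R}_k$ into $\mathfrak{R}_k$; the equality $\sigma(\mathfrak{R}_k)=\mathfrak{R}_k$ then follows formally.

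First I would raise the displayed formula to the $\ell^k$-th power. Because $w_i(\sigma)\, x_i^{\chi_\ell(\sigma)}\, w_i(\sigma)^{-1}$ is a conjugate of a pro-$\ell$ power of $x_i$, we obtain
\[
\sigma(x_i^{\ell^k}) = \big(w_i(\sigma)\, x_i^{\chi_\ell(\sigma)}\, w_i(\sigma)^{-1}\big)^{\ell^k}
= w_i(\sigma)\, x_i^{\,\ell^k \chi_\ell(\sigma)}\, w_i(\sigma)^{-1}.
\]
The key arithmetic point is that $\chi_\ell(\sigma)$ is a unit of $\Z_\ell$: the closed procyclic subgroup $\overline{\langle x_i \rangle} \cong \Z_\ell$ satisfies $\overline{\langle x_i^{\ell^k}\rangle} = \ell^k \Z_\ell$, and multiplication by the unit $\chi_\ell(\sigma)$ preserves $\ell^k \Z_\ell$. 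Hence $x_i^{\,\ell^k \chi_\ell(\sigma)} = (x_i^{\ell^k})^{\chi_\ell(\sigma)}$ already lies in the closed subgroup generated by $x_i^{\ell^k}$, and a fortiori in $\mathfrak{R}_k$.

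It remains to absorb the conjugation by $w_i(\sigma)$. Since $\mathfrak{R}_k$ is by definition a \emph{normal} closed subgroup of $\mathfrak{F}_{s-1}$, it is stable under conjugation by any element of $\mathfrak{F}_{s-1}$, in particular by $w_i(\sigma)$. Thus $\sigma(x_i^{\ell^k}) \in \mathfrak{R}_k$ for every $i$. As $\mathrm{Ih}_S(\sigma)$ is a continuous automorphism and $\mathfrak{R}_k$ is the smallest closed normal subgroup containing the elements $x_i^{\ell^k}$, its image $\sigma(\mathfrak{R}_k)$ is the smallest closed normal subgroup containing the elements $\sigma(x_i^{\ell^k})$, whence $\sigma(\mathfrak{R}_k) \subseteq \mathfrak{R}_k$. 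Applying the same argument to $\sigma^{-1}$ gives the reverse inclusion, so $\sigma(\mathfrak{R}_k) = \mathfrak{R}_k$.

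I expect the only genuinely delicate point to be the passage from $\chi_\ell(\sigma) \in \Z_\ell^\times$ to the assertion $x_i^{\,\ell^k \chi_\ell(\sigma)} \in \overline{\langle x_i^{\ell^k}\rangle}$: one must be careful that pro-$\ell$ exponentiation by a $\Z_\ell$-unit is a well-defined operation on the procyclic inertia subgroups and that it interacts correctly with taking closures. Everything else is formal manipulation with normality and with the continuity of the automorphism $\mathrm{Ih}_S(\sigma)$.
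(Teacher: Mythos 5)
Your proposal is correct and follows essentially the same route as the paper: both use Ihara's explicit form $\sigma(x_i)=w_i(\sigma)x_i^{N(\sigma)}w_i(\sigma)^{-1}$, commute the $\ell^k$-th power past the $\Z_\ell$-exponent, absorb the conjugation by normality of $\mathfrak{R}_k$, and invoke closedness to handle the pro-$\ell$ exponent (the paper does this by approximating $N(\sigma)$ with integers $a_n$ and passing to the limit, which is the same continuity argument you flag as the delicate point). Your remark that the unit property of $\chi_\ell(\sigma)$ is needed is actually superfluous for the inclusion $\sigma(\mathfrak{R}_k)\subseteq\mathfrak{R}_k$ — any $\Z_\ell$-power of $x_i^{\ell^k}$ lies in $\overline{\langle x_i^{\ell^k}\rangle}$ — and your explicit appeal to $\sigma^{-1}$ for the reverse inclusion is a small tidiness improvement over the paper's ``the result follows.''
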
 
\begin{proof}
 For every generator $x_i^{\ell^k}$ and $\sigma \in \mathrm{Gal}(\bar{\Q}/\Q)$ we have 
\[
\sigma(x_i^{\ell^k})=\sigma(x_i)^{\ell^k}=
\left(
w_i(\sigma) x_i^{N(\sigma)} w_i(\sigma)^{-1}
\right)^{\ell^k}.
\]
Let $a_n$ be a sequence of integers such that $a_n\rightarrow N(\sigma)$. We have 
\[
\left(
w_i(\sigma) x_i^{a_n} w_i(\sigma)^{-1}
\right)^{\ell^k}=
\left(
w_i(\sigma) x_i^{\ell^k} w_i(\sigma^{-1})
\right)^{a_n}. 
\]
The later element is in $\mathfrak{R}_k$ since by definition 
$\mathfrak{R}_k$
is normal in $\mathfrak{F}_{s-1}$.
The limit $a_n\rightarrow N(\sigma)$ is in $\mathfrak{R}_k$ since this group is by definition closed. The result follows. 
\end{proof}

It is clear that 
$\mathcal{A}^{\mathfrak{F}_{s-1}'
,\mathfrak{R}_k}=\Z_\ell[H_0]$ can be considered through the vertical map $\omega$ as an $\mathcal{A}$-module and inherits an action of $\mathrm{Gal}(\bar{\Q}/\Q)$ by $\omega$, by 
 writing
 $\alpha\in \mathcal{A}^{\mathfrak{F}_{s-1}'
,\mathfrak{R}_k}$ as 
the
image of an element $\alpha'\in \mathcal{A}$, that is $\alpha=\omega(\alpha')$ and 
defining
\[
\sigma (\alpha)= \sigma(\omega(\alpha'))=\omega(\sigma \alpha'). 
\]
By lemma \ref{invariantRk} this action is well defined. On the other hand an element $\phi=\overline{[x_i,x_j]} \in \mathfrak{F}_{s-1,k}'/\mathfrak{F}_{s-1,k}''= \mathfrak{F}_{s-1}' / (\mathfrak{F}'_{s-1} \cap \mathfrak{F}_{s-1''} \mathfrak{R}_k )$ is sent to 
the element 
\[
\theta(\phi)=d[\bar{x}_i,\bar{x}_j]=-\bar{u}_j dx_i  +\bar{u}_i dx_j \in \mathcal{A}_\psi^{\mathfrak{F}'_{s-1},\mathfrak{R}_k}.
\] 
% \highlight[id=AK]{More interesting the quotient action as explained in Kodani p. 668}
The module $\mathcal{A}_\psi^{\mathfrak{F}_{s-1}',\mathfrak{R}_k}$ 
is 
an
 $\mathcal{A}^{\mathfrak{F}_{s-1}',\mathfrak{R}_k}$-module, described by the sequence given in eq. (\ref{Qfree-res}) and by the matrix $Q$ given in eq. (\ref{rankd2}) and is naturally acted on by the absolute Galois group. Observe also that the map $\theta$ sends the class of $[x_i,x_j]$ to $d[x_i,x_j]=u_i dx_j -u_j dx_i$, and this element is annihilated by the elements
$\Sigma_i=
\sum_{\nu=0}^{\ell^k-1} \bar{x}_i^\nu$ for $1\leq i \leq s$. 
We can see this by direct computations or by observing that in $\mathcal{A}_\psi^{\mathfrak{F}_{s-1}'
,\mathfrak{R}_k}$ we have 
\[
\Sigma_i \cdot \beta_i= \beta_{s+1} \bar{x}_1\cdots \bar{x}_{i-1}. 
\]
and the image $\theta [x_i,x_j]$ has the $s+1$ coordinate $\beta_{s+1}=0$.
The above observation generalises the definition of ideal $\mathfrak{a}_n$ in eq. (8) in the article of Ihara, \cite{Ihara1985-it}.

Therefore, 
\[
H_1( C_{\ell^k,s-1} ,\Z_\ell)\cong \theta((\mathfrak{F}'_{s-1,k})^{\mathrm{ab}}) \subset  \mathcal{A}_\psi^{\mathfrak{F}_{s-1}'
,\mathfrak{R}_k}
\]
is acted on by $\mathcal{A}^{\mathfrak{F}_{s-1}'
,\mathfrak{R}_k}/ \langle \Sigma_i: 1\leq i \leq s \rangle$, and $\mathrm{Gal}(\bar{\Q}/\Q)$ acts on it in terms of the action given in eq. (\ref{nomodule}). 
Indeed, $\mathcal{A}^{\mathfrak{F}_{s-1},\mathfrak{R}_k}_\psi$ is identified with the cokernel of the matrix $Q$, i.e. an element in $\mathcal{A}^{\mathfrak{F}_{s-1},\mathfrak{R}_k}_\psi$ is the class of an $s$-tuple which is sent to  
\[
\sigma:(\beta_1,\ldots,\beta_s)+\mathrm{Im}(Q)
\longmapsto 
(\sigma \beta_1,\ldots, \sigma \beta_s) +\mathrm{Im}(Q).
\]
This action is well defined since the space $\mathrm{Im}(Q)$ is left invariant under the action of $\mathrm{Gal}(\bar{\Q}/\Q)$. Indeed, in the commutative ring $\mathcal{A}^{\mathfrak{F}_{s-1},\mathfrak{R}_k}$, the action $\sigma(\bar{x}_i)=\bar{x}_i^{N(\sigma)}$ so 
$\sigma(\Sigma_i)=\Sigma_i$, and invariance follows by eq. (\ref{IMQeq}).

\subsubsection{On Jacobian variety of Generalized Fermat curves}
\label{JacobGenFermat}
Consider the $\ell$-adic Tate module $T(\mathrm{Jac}( C_{\ell^k,s-1} ))$ of the Jacobian of the generalized Fermat curves $ C_{\ell^k,s-1} $:
\[
T(\mathrm{Jac}( C_{\ell^k,s-1} ))
% =\mathrm{Hom}(\Q_\ell/\Z_\ell, \mathrm{Jac}( C_{k,s-1} ))(\bar{\Q})
=H_1( C_{\ell^k,s-1} ,\Z)\otimes \Z_\ell=
\frac{
	\mathfrak{F}_{s-1,k}'
}{
	\mathfrak{F}_{s-1,k}''
}.
\]
Following Ihara we consider 
\begin{equation} \label{T-def}
\mathbb{T}:=\lim_{\mycom{\leftarrow}{k}} T(\mathrm{Jac}( C_{\ell^k,s-1} ))=
\lim_{\mycom{\leftarrow}{ k}}
\frac{
	\mathfrak{F}_{s-1,k}'
}{
	\mathfrak{F}_{s-1,k}''
}
, 
\end{equation}
where the inverse limit is considered with respect to the maps
$T(\mathrm{Jac}(C_{\ell^{k+1},s-1}))\rightarrow T(\mathrm{Jac}( C_{\ell^k,s-1} ))$, 
which is induced by the map 
\[
(x_0,\ldots,x_{s-1}) \mapsto (x_0^\ell,\ldots,x_{s-1}^\ell). 
\]
Let $\bar{C}_{\ell^k,s-1}= C_{\ell^k,s-1}  \otimes_{\mathrm{Spec}\Q} \mathrm{Spec} \bar{\Q}$.
Consider also the inverse limit
\[
\lim_{\mycom{\leftarrow}{  k}} \mathrm{Gal}(\bar{C}_{\ell^k,s-1}/\mathbb{P}^1_{\bar{\Q}})=
\lim_{\mycom{\leftarrow}{k}} (\Z/\ell^k\Z)^{s-1}=\Z_\ell^{s-1}.
\]
Therefore
\[
\lim_{\mycom{\leftarrow}{ k}} \Z_\ell[\mathrm{Gal}(\bar{C}_{\ell^k,s-1}/\mathbb{P}^1_{\bar{\Q}})] \cong \mathcal{A}
\]
and $\mathbb{T}$ can be considered as an $\mathcal{A}$-module. 
Using eq. (\ref{reductionA}) we obtain
\begin{equation}
\label{prim-iso}
% \left(
\frac{
\mathfrak{F}_{s-1}'
}
{
	\mathfrak{F}_{s-1}''
}
% \right)
% ^{\mathrm{prim}}
 \cong 
 \mathbb{T}.
\end{equation}
% {\color{red} This might be wrong since the module is not free for $s\geq 3$
% Maybe we need the Primary part!}
See  \cite[sec. 13]{HyperadelicGamma} for the explicit isomorphism in the case of Fermat curves.

The geometric interpretation of this construction is that for fixed $s$-number of points we can consider all generalized Fermat curves seen as $(\Z/\ell^k \Z)^s$ ramified covers of the projective line, for $k\in \mathbb{N}$. In this way we obtain a curve $C_s$,  which is a $\Z_\ell^{s-1}$ cover of the projective line. The Burau representation and the pro-$\ell$ Burau representation can be defined in terms of such an infinite Galois cover, see \cite{MR4117575}.

This construction leads to the definition of  a subspace $\mathbb{T}^{\mathrm{prim}} \subset \mathbb{T}$ which is a free $\mathcal{A}$-module of rank $s-2$. 
% change 1 {\color{red}( s-2)?}
% \highlight[id=AK]{define Prim}. 
% \begin{remark}
Observe that the submodule of a free module is not necessarily a free module and $\mathfrak{F}_{s-1}'/\mathfrak{F}_{s-1}''$ is not necessarily free. For example in the following short exact sequence
\[
\xymatrix@C=10pt{
0 \ar[r] &  (\mathfrak{F}'_{s-1})^{\mathrm{ab}}=\mathfrak{F}_{s-1}'/\mathfrak{F}_{s-1}''
\ar[r] & 
\mathcal{A}^{s-1}
\ar[r]^{d_1} 
&
\mathcal{A}
\ar[r]  & 
%\stackrel{e_\mathcal{A}}{\longrightarrow} 
\mathbb{Z}_\ell
\ar[r]  &  
0 
}
\]
$\mathfrak{F}_{s-1}'/\mathfrak{F}_{s-1}''$ is contained in the free $\mathcal{A}$-module $\mathcal{A}^{s-1}$, but is free itself. 
The $\mathcal{A}$-module $\mathfrak{F}_{s-1}'/\mathfrak{F}_{s-1}''$ contains 
 the free module of rank $s-2$ (see \cite{Oda1985-qu},\cite[Th. 5.39]{MorishitaATIT})
\[
% (\mathfrak{F}_{s-1}'/\mathfrak{F}_{s-1}'')^{\mathrm{prim}}:=
\mathbb{T}^{\mathrm{prim}}:=
\left\{ 
(\lambda_j u_1\cdots \hat{u}_j \cdots u_{s-1})_{j=1,\ldots,s-1}: \lambda_j \in \mathcal{A}, \sum_{j=1}^{s-1} \lambda_j=0
\right\}.
\]
Set $w=u_1\cdots u_{s-1}$. Using eq. (\ref{prim-iso}) we see that a basis 
of $(\mathfrak{F}_{s-1}'/\mathfrak{F}_{s-1}'')^{\mathrm{prim}}$ is given by
\[
v_1=\left(-\frac{w}{u_1},\frac{w}{u_2},0,\ldots,0\right),\ldots,
v_{s-2}=\left(0,\ldots,0,-\frac{w}{u_{s-2}},\frac{w}{u_{s-1}}\right).
\]
In the case of Fermat curves, i.e. $s=2$ we have that $(\mathfrak{F}_{s-1}'/\mathfrak{F}_{s-1}'')^{\mathrm{prim}}=\mathfrak{F}_{s-1}'/\mathfrak{F}_{s-1}''$ and $\mathfrak{F}_{s-1}'/\mathfrak{F}_{s-1}''$ is a free $\mathcal{A}$-module, generated by $[x_1,x_2]$. Notice that the injective map $d:\mathfrak{F}_{s-1}'/\mathfrak{F}_{s-1}''
\stackrel{d}{\longrightarrow} 
\mathcal{A}^{s-1}$ is given by sending a representative
\begin{align*}
[x_i,x_j]\rightarrow d([x_i,x_j]) &= (1-x_j)dx_i- (1-x_i)dx_j
\\
&= -u_j \cdot dx_i+ u_i \cdot dx_j.
\end{align*}
% \end{remark} 
% {\color{blue}
% The 
% Set $w_{ij}=w/(u_i u_j)$. The elements 
% \[
% w_{ij}\cdot d([x_i,x_j])=w/u_j \cdot dx_j- w/u_i \cdot dx_i \in (\mathfrak{F}_{s-1}'/\mathfrak{F}_{s-1}'')^{\mathrm{prim}}.
% \]
% This means that in the image of the commutators $d([x_i,x_j])$ there is some contribution to the torsion part of $\mathfrak{F}'_{s-1}/\mathfrak{F}''_{s-1}$. 
% }

\begin{proposition} \label{44}
Let $G$ be either $\mathrm{Gal}(\bar{\Q}/\Q)$ or the braid group $B_s$.  
An element in $g\in G$ induces an action on both $\mathbb{T}$ and $\mathbb{T}^{\mathrm{prim}}$. In particular the subspace 
$\mathbb{T}^{\mathrm{prim}}$ is a free  $\mathcal{A}$-module. Thus we have a cocycle map 
\begin{align*}
\rho:  \mathrm{Gal}(\bar{\Q}/\Q)&\rightarrow \mathrm{GL}_{s-2}(\mathcal{A})\\ %change 1
\sigma & \longmapsto (a_{ij}(\sigma))
\end{align*}
This cocycle can be given in terms of the matrix
\[
\sigma(w_{ij} d[x_i,x_j])=\sum_{\nu < \mu } a_{\nu\mu}(\sigma) w_{\nu\mu} d[x_\nu,x_\mu].
\]
\end{proposition}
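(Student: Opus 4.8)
The plan is to obtain the representation entirely from the observation that $\mathbb{T}\cong\mathfrak{F}_{s-1}'/\mathfrak{F}_{s-1}''$ (eq. (\ref{prim-iso})) is assembled from characteristic subgroups. First I would record that both groups act on $\mathfrak{F}_{s-1}$ by automorphisms: $\mathrm{Gal}(\bar{\Q}/\Q)$ through the Ihara representation $\mathrm{Ih}_S$ of Section \ref{sec:IharaRep}, and $B_s$ through its Artin action on $\pi_1$. Since $\mathfrak{F}_{s-1}'$ and $\mathfrak{F}_{s-1}''$ are verbal, hence closed characteristic subgroups, every $\phi\in\mathrm{Aut}(\mathfrak{F}_{s-1})$ preserves both and descends to an automorphism of $\mathbb{T}$; this gives the action of $G$ on $\mathbb{T}$. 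The one point to keep track of is that the action is only $\mathcal{A}$-\emph{semilinear}: as $\phi$ also acts on $\mathfrak{H}=\mathfrak{F}_{s-1}^{\mathrm{ab}}$ and hence on $\mathcal{A}=\mathrm{Sym}(\mathfrak{H})$, the conjugation module structure of Section \ref{sec.5.1} gives $g(a\cdot m)=g(a)\cdot g(m)$ for $a\in\mathcal{A}$, $m\in\mathbb{T}$, $g\in G$, with $g$ acting on $\mathcal{A}$ through (\ref{nomodule}).

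The heart of the statement is the $G$-invariance of $\mathbb{T}^{\mathrm{prim}}$. For this I would use the Blanchfield--Lyndon embedding $d:\mathbb{T}\hookrightarrow\mathcal{A}^{s-1}$ of Proposition \ref{Blanchfield-Lyndon}, which sends $[x_i,x_j]\mapsto u_i\,dx_j-u_j\,dx_i$; being induced functorially by the Fox/Magnus calculus, $d$ is $G$-equivariant for the semilinear structures on source and target. Under $d$ the primitive part is the free rank $s-2$ submodule $\{(\lambda_j w/u_j)_j:\sum_j\lambda_j=0\}$ of \cite{Oda1985-qu}, \cite[Th. 5.39]{MorishitaATIT}, and one checks directly that each spanning vector $w_{ij}\,d[x_i,x_j]=-\tfrac{w}{u_i}dx_i+\tfrac{w}{u_j}dx_j$ satisfies $\sum_k\lambda_k=0$ and so lies in it. The crucial input is that this submodule is characterized intrinsically in the cited references (as the primitive part, with no reference to a choice of generators), so its image under any $g\in G$ is again the primitive part and $\mathbb{T}^{\mathrm{prim}}$ is stable. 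I expect this functorial invariance to be the main obstacle: a hands-on verification that the defining divisibility of $\mathbb{T}^{\mathrm{prim}}$ is preserved is unpleasant, because $g$ transforms the $u_i$ by the nontrivial power series of (\ref{nomodule}) and not by monomials, so the intrinsic description really is needed.

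Finally I would read off the cocycle. On the free $\mathcal{A}$-module $\mathbb{T}^{\mathrm{prim}}$ with basis $v_\nu=w_{\nu,\nu+1}\,d[x_\nu,x_{\nu+1}]$ ($1\le\nu\le s-2$), semilinearity lets me write $g(v_j)=\sum_i a_{ij}(g)\,v_i$ with $a_{ij}(g)\in\mathcal{A}$, and invertibility of $g$ forces $\rho(g)=(a_{ij}(g))\in\mathrm{GL}_{s-2}(\mathcal{A})$. Expanding $gh(v_j)=g\big(\sum_k a_{kj}(h)v_k\big)=\sum_{i,k}a_{ik}(g)\,g(a_{kj}(h))\,v_i$ yields the crossed-homomorphism identity $\rho(gh)=\rho(g)\cdot{}^{g}\rho(h)$, where ${}^{g}\rho(h)$ is the entrywise image of $\rho(h)$ under $g$; thus $\rho$ is a $1$-cocycle for the $G$-action on $\mathrm{GL}_{s-2}(\mathcal{A})$ and not a genuine homomorphism, degenerating to the usual Gassner representation exactly when $g$ acts trivially on $\mathcal{A}$ (e.g. for pure braids). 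Rewriting the action of $\sigma$ on each spanning vector in the same spanning set then produces the displayed formula $\sigma(w_{ij}\,d[x_i,x_j])=\sum_{\nu<\mu}a_{\nu\mu}(\sigma)\,w_{\nu\mu}\,d[x_\nu,x_\mu]$, which completes the proof.
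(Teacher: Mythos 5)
Your proposal is correct and follows essentially the same route as the paper, which itself gives no formal proof but relies on the surrounding construction: the identification $\mathbb{T}\cong\mathfrak{F}_{s-1}'/\mathfrak{F}_{s-1}''$, the Galois/braid action coming from automorphisms of $\mathfrak{F}_{s-1}$ preserving the characteristic subgroups (together with lemma \ref{invariantRk}), the Blanchfield--Lyndon embedding $d$, and the citation of \cite{Oda1985-qu} and \cite[Th. 5.39]{MorishitaATIT} for the freeness and intrinsic characterization of $\mathbb{T}^{\mathrm{prim}}$. Your explicit derivation of the crossed-homomorphism identity $\rho(gh)=\rho(g)\cdot{}^{g}\rho(h)$ from the $\mathcal{A}$-semilinearity is a detail the paper leaves implicit in calling $\rho$ a ``cocycle map,'' and is a worthwhile addition rather than a deviation.
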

\begin{remark}
In \cite[sec. 5.3]{MorishitaATIT} this cocycle is identified as the Gassner representation and the relation with the classical definition in terms of Fox derivatives see \cite[chap. 3]{BirmanBraids} is studied. 
The Gassner cocycles when restricted to a certain subgroup $\mathrm{Gal}(\bar{\Q}/\Q)[1] \subset \mathrm{Gal}(\bar{\Q}/\Q)$ give rise to a representation instead of cocycle, see \cite{MorishitaATIT}.
\end{remark} 

\subsubsection{From generalized Fermat curves to cyclic covers of $\mathbb{P}^1$}
\label{Gassner2Burau}

We will now relate the Crowell sequences for the generalized Fermat curves and cyclic covers $\bar{Y}_{\ell^k}$ of the projective line as they were defined in \cite{MR4117575} using the results of \cite{ParamPartC19}. 
This will provide the relation of the Gassner representation to the Burau representation. The analogon of the Burau representation was defined in \cite[p.675]{MorishitaATIT} by reduction of the Gassner representation. Here we also consider this reduction with respect to the curve definition of the Burau representation.

We have the following diagram of ramified coverings of curves
\[
\xymatrix{
 C_{\ell^k,s-1}  
\ar[dr]^{(\Z/\ell^k \Z)^{s-1}}
&  &  
\bar{Y}_{\ell^k} \ar[dl]_{\Z/\ell^k\Z} \\
&
\mathbb{P}^{1}-\{0,1,\infty,\lambda_1,\ldots,\lambda_{s-3}\} 
&
}
\]
The passage for the corresponding representations from $ C_{\ell^k,s-1} $ to $\bar{Y}_{\ell^k}$ corresponds to the passage from the 
Gassner representation to the Burau representation, see 
\cite[prop. 3.12]{BirmanBraids} and \cite[sec. 5]{MorishitaATIT}.

% {\bf 12/2/2018}
Set $\bar{R}_{\ell^k}$ be the fundamental group of the closed curve $\bar{Y}_{\ell^k}$, which can be computed  using Schreier lemma, see \cite{MR4117575}:
\[
\bar{R}_{\ell^k}=R_{\ell^k}/\Gamma=
\left\langle
 (x_2x_1^{-1})^{x_1^{\nu}},\ldots,
(x_{s-1} x_1^{-1})^{x_1^\nu}: 0\leq \nu <\ell^k-1
\right\rangle.
\]
Let also $C_s$ be the $\Z$ cover of the projective line ramified over $s$-points. Let $R$ be its fundamental group, which by \cite{MR4117575} equals 
\[
R=
\left\langle 
\left(
x_j x_1^{-1}
\right)^{x_1^{\nu}}: \nu\in \mathbb{Z}, 2\leq j \leq s-1
\right\rangle.
\]
The fixed field of $R/ \mathfrak{R}_k$ is the function field $K_{\ell^k}$ of the curve $\bar{Y}_{\ell^k}$ and $\mathbbm{k}(C_s)$ is the function field of the curve 
$C_s$. The group $R'$ corresponds to the maximal unramified abelian extension
 $\mathbbm{k}(C_s)^{\mathrm{ur}}$ of $\mathbbm{k}(C_s)$ while $\mathfrak{R}_k$ corresponds to the maximal unramified extension $\mathbbm{k}(C_s)^{\mathrm{unrab}}$. The group $R'\cdot \mathfrak{R}_k$ corresponds to the maximal 
abelian unramified $K_{\ell^k}^{\mathrm{unrab}}$ extension of $K_{\ell^k}$. 
The groups $F_{s-1}' \cdot \mathfrak{R}_k$ and $F_{s-1}'' \cdot 
\mathfrak{R}_k$
correspond to the generalized Fermat curve $ C_{\ell^k,s-1} $ and the maximal unramified extension $ C_{\ell^k,s-1} ^{\mathrm{unrab}}$. 
The groups $F_{s-1}',F_{s-1}''$ correspond to the maximal abelian unramified extension of $K_0$ and the maximal abelian unramified
 extension of $K'$ respectively. 
\[
\xymatrix@C=5pt{
 & \mathcal{M} \ar@{-}[dr] \ar@{-}[dl] &
\\
K''  \ar@{-}[d]  \ar@{-}[dr] & &  \mathbbm{k}(C_s)^{\mathrm{ab}} \ar@{-}[d] \ar@{-}[dl] & \mathbbm{k}(C_s)^{\mathrm{unrab}}\ar@{-}[dl] \ar@{-}[d] 
\\
K' \ar@{-}[dr]&  
\mathbbm{k}(C_{\ell^k,s-1}) ^{\mathrm{unrab}}  \ar@{-}[d]
&  K_{\ell^k}^{\mathrm{unrab}}  \ar@{-}[d]|-{H_1(C_{\ell^k},\mathbb{Z})} \ar@{<-}[l] &
\mathbbm{k}(C_s)  \ar@{-}[dl]
\\
&  \mathbbm{k}(C_{\ell^k,s-1})  \ar@{-}[dr]  & K_{\ell^k}  \ar@{-}[d] \ar@{<-}[l]\\ 
&  &	K_0=\mathbbm{k}(t)
}
\;\;\;
%-------------------------
\xymatrix@C=5pt{
 & \{1\} \ar@{-}[dr] \ar@{-}[dl] &
\\
 F_{s-1}''  \ar@{-}[d]  \ar@{-}[dr] & &  \mathfrak{R}_k \ar@{-}[d] \ar@{-}[dl] & R'  \ar@{-}[dl] \ar@{-}[d] 
\\
F_{s-1}' \ar@{-}[dr]&  F_{s-1}'' \ar@{-}[d] \cdot \mathfrak{R}_k \ar@{^{(}->}[r]
& R' \cdot \mathfrak{R}_k  \ar@{-}[d]|-{H_1(C_{\ell^k},\mathbb{Z})}  &
R  \ar@/^1pc/@{-}[dl]
\\
& F_{s-1}' \cdot \mathfrak{R}_k\ar@{-}[dr] 
\ar@{^{(}->}[r] & R \cdot \mathfrak{R}_k  \ar@{-}[d] %\ar@{_{(}->}[l]
\\ 
&  &	F_{s-1}
}
\]
As in the case of generalized Fermat curves we can form the limit 
\[
\mathbb{T}_R:=\lim_{\mycom{\leftarrow}{ k}} (\mathrm{Jac}(\bar{Y}_{\ell^k}))=
\lim_{\mycom{\leftarrow}{ k}} (R_{\ell^k}/\mathfrak{R}_k)^{\mathrm{ab}}=R^{\mathrm{ab}}=H^1(\bar{Y}_{\ell^k},\Z_\ell).
\]

We now compare the Crowell sequences for the cyclic covers and the Fermat covers, following \cite{ParamPartC19}
\[
\xymatrix@C=13pt{
 &  & 0 & 0 \\
0 \ar[r] & R^{\mathrm{ab}}=\mathbb{T}_R \ar[r]  & 
\mathcal{A}_\psi^{R,\mathfrak{R}_k} \ar[r] \ar[u]  &
\mathcal{A}^{R,\mathfrak{R}_k}=\Z_\ell[[\Z/\ell^k\Z]] \ar[r]  \ar[u] &  \Z_\ell \ar[r] & 0 
\\
0 \ar[r] & (\mathfrak{F}'_{s-1})^{\mathrm{ab}}=\mathbb{T} \ar[r] \ar[u]^{\phi_1} & 
\mathcal{A}_\psi^{\mathfrak{F}'_{s-1},\mathfrak{R}_k}  \ar[r]  \ar[u]^{\phi_2} &
\mathcal{A}^{\mathfrak{F}'_{s-1},\mathfrak{R}_k}=
\Z_\ell[[(\Z/\ell^k\Z)^{s-1}]] \ar[r]  \ar[u]^{\phi_3} &  \Z_\ell \ar[r] \ar[u] &  0 \\
}
\]
The map 
$
\phi_1:\mathbb{T} \rightarrow \mathbb{T}_R
$
on Tate modules is given by the first vertical map. The action module structure is given by the commutating diagram
\[
\xymatrix{
	\mathcal{A} \times \mathbb{T} \ar[r] \ar[d] & \mathbb{T} \ar[d]^{\phi_1} \\
	\mathcal{A}^{R,\mathfrak{R}_k} \times \mathbb{T}_R \ar[r] & \mathbb{T}_R
}
\]
where the horizontal maps are the module actions and the first vertical map sends $(a,t) \mapsto (\phi_3(a),\phi_1(t))$. The map $\phi_3$ is the reduction identifying the variables $x_1,x_2,\ldots,x_{s-1}$. Let $G$ be as in proposition \ref{44}.
The map $\phi_2$ is defined in a similar way as in eq. (\ref{presentFree}).
 In particular from the reduction $\mathbb{T} \rightarrow \mathbb{T}_R$ we obtain the diagram 
\[
\xymatrix{
	G \ar[r] \ar[rd] & \mathrm{GL}_{s-2}(\mathcal{A}) \ar[d]\\
& \mathrm{GL}_{s-2}(\mathcal{A}^{R,\mathfrak{R}_k})
} %change 1  two times
\]
corresponding to the free parts of $\mathbb{T}$ and $\mathbb{T}_R$ respectively. 

% \newpage
% \setlength{\marginrulewidth}{0pt}

% \bibliography{ijmsample}
% \bibliographystyle{ijmart}

%\bibliographystyle{hplain}
\bibliographystyle{plain}

 \def\cprime{$'$}
\providecommand{\MR}{\relax\ifhmode\unskip\space\fi MR }
% \MRhref is called by the amsart/book/proc definition of \MR.
\providecommand{\MRhref}[2]{%
  \href{http://www.ams.org/mathscinet-getitem?mr=#1}{#2}
}
\providecommand{\href}[2]{#2}

\end{document}